\documentclass[12pt]{amsart}
\usepackage{amsmath}
\usepackage{amsfonts}
\usepackage{amsthm}
\usepackage{amssymb}
\usepackage{tikz-cd}
\usepackage{graphics}
\usepackage{array}
\usepackage{dsfont}
\usepackage{color}
\usepackage{wasysym}
\usepackage{hyperref}
\usepackage{pdfsync}
\usepackage{mathrsfs}

\usepackage[alphabetic]{amsrefs}
\usepackage{geometry}
\geometry{a4paper,top=3.5cm,bottom=3.8cm,left=2.5cm,right=2.5cm}
\usepackage{bbm}
\usepackage[all,cmtip]{xy}
\usepackage{tikz}\usetikzlibrary{intersections}

\newcommand{\R}{{\mathbb R}}

\newcommand{\Q}{{\mathbb Q}}

\DeclareMathOperator{\Supp}{Supp}

\DeclareMathOperator{\Exc}{Exc}

\DeclareMathOperator{\red}{red}

\theoremstyle{plain}

\newtheorem{thm}{Theorem}[section]

\newtheorem{prop}[thm]{Proposition}
\newtheorem{coro}[thm]{Corollary}
\newtheorem{lemma}[thm]{Lemma}

\theoremstyle{definition}

\newtheorem{defi}[thm]{Definition}
\newtheorem{exa}[thm]{Example}
\newtheorem{rem}[thm]{Remark}


\newcommand{\bP}{\mathbb{P}}

\newcommand{\bQ}{\mathbb{Q}}

\newcommand{\bN}{\mathbb{N}}

\newcommand{\oX}{\overline{X}}
\newcommand{\oY}{\overline{Y}}
\newcommand{\oZ}{\overline{Z}}
\newcommand{\oB}{\overline{B}}
\newcommand{\oD}{\overline{D}}
\newcommand{\oV}{\overline{V}}

\newcommand{\oDelta}{\overline{\Delta}}
\newcommand{\oGamma}{\overline{\Gamma}}
\newcommand{\oPhi}{\overline{\Phi}}

\newcommand{\tX}{\widetilde{X}}

\newcommand{\tZ}{\widetilde{Z}}

\newcommand{\tD}{\widetilde{D}}
\newcommand{\tB}{\widetilde{B}}

\newcommand{\tH}{\widetilde{H}}
\newcommand{\tf}{\widetilde{f}}
\newcommand{\tvarphi}{\widetilde{\varphi}}
\newcommand{\tpsi}{\widetilde{\psi}}
\newcommand{\tDelta}{\widetilde{\Delta}}

\newcommand{\cX}{\mathcal{X}}

\newcommand{\cO}{\mathcal{O}}

\newcommand{\cM}{\mathcal{M}}

\newcommand{\cD}{\mathcal{D}}
\newcommand{\cP}{\mathcal{P}}

\newcommand{\cK}{\mathcal{K}}
\newcommand{\cN}{\mathcal{N}}
\newcommand{\cQ}{\mathcal{Q}}

\newcommand{\va}{\mathbf{a}}
\newcommand{\vb}{\mathbf{b}}
\newcommand{\vc}{\mathbf{c}}

\newcommand{\vv}{\mathbf{v}}

\DeclareMathOperator{\Diff}{Diff}

\DeclareMathOperator{\Sing}{Sing}
\DeclareMathOperator{\lc}{lc}

\newcommand{\norm}[1]{\left\lVert#1\right\rVert}
\newcommand{\simover}[1]{\sim_{\R,#1}}

\numberwithin{equation}{section}



\begin{document}

\title[Wall crossing]{MMP for locally stable families and wall crossing for moduli of stable pairs}
	
	\author{Fanjun Meng}
	\address{Department of Mathematics, Johns Hopkins University, 3400 N. Charles Street, Baltimore, MD 21218, USA}
	\email{fmeng3@jhu.edu}
	
	\author{Ziquan Zhuang}
	\address{Department of Mathematics, Johns Hopkins University, 3400 N. Charles Street, Baltimore, MD 21218, USA}
	\email{zzhuang@jhu.edu}

	\thanks{2020 \emph{Mathematics Subject Classification}: 14E30, 14J10, 14J17.\newline
		\indent \emph{Keywords}: minimal model program, moduli spaces, stable pairs.}

\begin{abstract}
We construct reduction and wall-crossing morphisms between the moduli spaces of stable pairs as the coefficients vary, generalizing the earlier work of Ascher, Bejleri, Inchiostro and Patakfalvi which deals with the klt case. Along the proof, we show that one can run the MMP with scaling on normal locally stable families over a normal base, and that the existence of good minimal models is preserved when reducing coefficients away from zero.
\end{abstract}

	\maketitle

\section{Introduction}

In \cite{Has03}, Hassett shows that there are natural reduction and wall-crossing morphisms between the moduli spaces of weighted pointed stable curves as the weights vary. The higher-dimensional generalization of weighted pointed stable curves, whose study goes back to \cite{KSB88}, is the notion of stable pairs. These are semi-log-canonical (slc) pairs $(X,D=\sum a_i D_i)$ (the basic examples are simple normal crossing pairs $(X,D)$ with coefficients $a_i\in (0,1]$) such that $K_X+D$ is ample. The moduli theory of stable pairs has been established in the past several decades; we refer to \cite{Kol23} for a comprehensive discussion. In particular, for any dimension $d$, any coefficient vector $\va$ for the boundary divisor $D$, and any volume $v=(K_X+D)^d >0$, there exists a proper moduli space that parameterizes stable pairs with the given numerical invariants.

It is natural to compare these moduli spaces as the coefficients vary, and one may ask whether the reduction and wall-crossing morphisms generalize to the moduli of stable pairs. Such morphisms play an important role in the study of explicit moduli compactifications, and have been worked out in several explicit examples, see e.g. \cites{Ale15,Inc20,AB21}. The general question is taken up recently in \cite{ABIP23} under a klt assumption. The goal of this article is to remove this klt condition and to construct, in the general setting, the reduction and wall-crossing morphisms between the moduli spaces of stable pairs.

These morphisms are typically obtained through an ample model construction: if we denote by $\cK_\va$ the moduli space of stable pairs with coefficient vector $\va$, and let $\vb$ be another coefficient vector, then the expected natural morphism $\cK_\va \to \cK_\vb$ should send a stable pair $(X,\sum a_i D_i)$ to the semi-log-canonical model of $(X,\sum b_i D_i)$, which is roughly the ample model of $K_X+\sum b_i D_i$. Since semi-log-canonical models of non-normal semi-log-canonical pairs do not always exist (see e.g. \cites{Kol11-slc-no-MMP,AK19} or Section \ref{ss:slc counterexample} for some examples), this imposes restrictions on the locus where the morphism $\cK_\va \to \cK_\vb$ can be defined. Another related restriction, observed in \cite{ABIP23}, is that the reduction morphisms can only be defined on the normalization of the moduli spaces. With these in mind, our main results give some optimal statements that one may hope for. Roughly speaking, we show that wall-crossing morphisms exist on the locus that parameterizes degenerations of normal (hence log canonical) stable pairs, while the reduction morphisms exist on the normalization of the same locus.

\subsection{Main theorems}

To state our main theorems more precisely, we need to introduce some notation. Fix a locally stable family (see Definition \ref{defi:locally stable families of pairs}) $f\colon (X,\sum_{i=1}^n a_i D_i)\to B$ over a normal (but not necessarily proper) variety. A special case is when $(X,\sum D_i)$ is log smooth over $B$ and $0<a_i\le 1$ for all $i$. For ease of notation, we write $\va=(a_1,\dots,a_n)$. For any coefficient vector $\vb=(b_1,\dots,b_n)$, we denote by $\vb D$ the divisor $\sum_{i=1}^n b_i D_i$. Following \cites{Has03, ABIP23}, we say that a coefficient vector $\vb=(b_1,\dots,b_n)$ is admissible if $0<b_i\le a_i$ for all $i$ and $K_{X/B}+\vb D$ is big over $B$. 

\begin{thm}[reduction] \label{main thm:weight reduction}
Assume that the generic fiber of $(X, \va D)\to B$ is normal and has a log canonical model. Then for any admissible coefficient vector $\vb$, there exists a $($unique$)$ stable family $(Z_\vb,\vb \Delta_\vb)\to B$ that is the log canonical model of $(X,\vb D)\to B$ $($i.e. the ample model of $K_{X/B}+\vb D$ over $B)$. In particular, it induces a unique morphism $\Phi_\vb \colon B\to \cK_\vb$.
\end{thm}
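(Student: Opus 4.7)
The plan is to build $(Z_\vb,\vb\Delta_\vb)\to B$ as the relative ample model of $K_{X/B}+\vb D$ over $B$, so uniqueness will follow automatically once the model is shown to exist and to define a stable family. The construction proceeds in two main steps: running a relative MMP with scaling to produce a good minimal model of $(X,\vb D)$ over $B$, and then taking the ample model. Both steps lean on the two technical pillars advertised in the abstract, namely the MMP with scaling for normal locally stable families over a normal base, and the preservation of good minimal models when coefficients are reduced away from zero.

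First I would verify that $(X,\vb D)\to B$ is still a normal locally stable family: decreasing coefficients with $0<b_i\le a_i$ while keeping the support preserves slc-ness of the fibers and the relative local-stability conditions, and normality of $X$ follows from normality of $B$ together with the assumption that the generic fiber is normal. Then I would produce a good minimal model of $(X_\eta,\vb D_\eta)$ on the generic fiber: $(X_\eta,\va D_\eta)$ has a log canonical model by hypothesis, hence a good minimal model, and since $\vb$ is obtained from $\va$ by a coefficient reduction that stays strictly positive, the preservation result transports this good minimal model to coefficient $\vb$. Because $K_{X/B}+\vb D$ is big over $B$ by admissibility of $\vb$, this good minimal model is in fact a log canonical model on the generic fiber.

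Next I would apply the MMP-with-scaling result to run an MMP for $K_{X/B}+\vb D$ over $B$, scaling by a suitable divisor (for instance a combination of the discarded boundary $(\va-\vb)D$ and a relative ample). With a good minimal model present on the generic fiber, this MMP terminates inside the category of normal locally stable families over $B$, yielding a relative good minimal model $(Y,\vb D_Y)\to B$. Taking the ample model of $K_{Y/B}+\vb D_Y$ over $B$ then produces $(Z_\vb,\vb\Delta_\vb)\to B$ with $K_{Z_\vb/B}+\vb\Delta_\vb$ $\bQ$-Cartier and relatively ample; locally stability survives the final contraction, so the fibers are slc, and $(Z_\vb,\vb\Delta_\vb)\to B$ is a stable family. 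The classifying morphism $\Phi_\vb\colon B\to\cK_\vb$ is then furnished by the universal property of $\cK_\vb$ applied to this stable family, and both the family and the morphism are unique by the universal property of the ample model.

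The main obstacle I anticipate is the MMP step: termination on the generic fiber is guaranteed by the assumption, but spreading this out over $B$ requires the full strength of the authors' MMP for locally stable families, and one must carefully track the locally stable structure through each divisorial contraction and flip, in particular over loci in $B$ where the fibers degenerate to genuinely non-normal slc pairs. A closely related subtlety arises at the ample-model step: semi-log-canonical models of non-normal slc pairs need not exist in general, so one must use the normality of the generic fiber, propagated along the MMP, to ensure that the ample model remains a well-defined stable family over all of $B$ rather than merely over a dense open subset.
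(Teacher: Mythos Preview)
Your overall architecture---reduce coefficients on the generic fiber, then run the relative MMP and take the ample model---matches the paper. But there is a genuine gap in the very first step.

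You assert that ``$(X,\vb D)\to B$ is still a normal locally stable family: decreasing coefficients with $0<b_i\le a_i$ while keeping the support preserves slc-ness of the fibers and the relative local-stability conditions.'' This is not true in general. Local stability requires $K_{X/B}+\vb D$ to be $\R$-Cartier, and there is no reason this should hold once you drop coefficients: the original family only gives you that $K_{X/B}+\va D$ is $\R$-Cartier, and the individual $D_i$ need not be $\Q$-Cartier. Likewise on each fiber, $(X_b,\vb D_b)$ is only ``of log canonical type,'' not an honest log canonical pair, so neither the coefficient-reduction theorem nor the MMP-with-scaling theorem applies as stated.

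The paper repairs this with a small birational modification: Lemma~\ref{lem:small Q-Cartier modification over normal base} produces $\pi\colon Y\to X$ small with $(Y,\vb D_Y)\to B$ genuinely locally stable (and with normal generic fiber), and the log canonical model of $(X,\vb D)\to B$ is then \emph{defined} to be that of $(Y,\vb D_Y)\to B$ (Definition~\ref{defi:lc model for families of locally stable type}). Correspondingly, on the generic fiber the paper invokes the non-$\R$-Cartier version of the coefficient-reduction statement (Theorem~\ref{thm:lc model exists non-R-Cartier}) rather than the $\R$-Cartier one you implicitly use. Once you insert this small-modification step, your argument becomes the paper's; without it, neither your MMP step nor your generic-fiber step is justified.
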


In particular, if we denote by $\overline{\cK}_\va^{\lc}$ the normalized closure of the locus in the moduli space $\cK_\va$ that parameterizes log canonical stable pairs with coefficient vector $\va$, then the above theorem gives the reduction morphism $\overline{\cK}_\va^{\lc}\to \overline{\cK}_\vb^{\lc}$ for every admissible coefficient vector $\vb$. When the generic fiber of $(X, \va D)\to B$ is klt and the base $B$ is smooth (this is one of the cases considered in \cite{ABIP23}), Theorem \ref{main thm:weight reduction} follows from \cite{BCHM10}.

We say that a polytope $\cP\subseteq \R^n$ is admissible if every coefficient vector in $\cP$ is admissible. Under the assumptions of Theorem \ref{main thm:weight reduction}, as the coefficient $\vb$ varies in an admissible polytope $\cP$, we also have a wall-and-chamber decomposition of $\cP$ corresponding to the reduced closure $\cM_\vb := \overline{\Phi_\vb(B)}$ of the images of the reduction morphisms $\Phi_\vb\colon B\to \cK_\vb$.

\begin{thm}[wall crossing] \label{main thm:wall crossing}
Under the assumptions of Theorem \ref{main thm:weight reduction}, let $\cP$ be an admissible rational polytope. Then there exists a finite rational polyhedral decomposition $\cP=\cup_{i=1}^p \cP_i$ such that:
\begin{enumerate}
    \item In any open chamber $\cP_i^\circ$, the family $(Z_\vb, \Delta_\vb)\to B$ of marked pairs is independent of $\vb\in \cP_i^\circ$, and $\cM_\vb\cong \cM_{\vb'}$ for any $\vb,\vb'\in \cP_i^\circ$.
    
    \item For any $i$, we denote $\cM_\vb$ by $\cM_i$ for any $\vb\in \cP_i^\circ$ $($this is well-defined by part $(1))$. Then for any $i,j$ such that $\cP_j$ is a face of $\cP_i$, there exists a $($unique$)$ morphism $\alpha_{ij}\colon \cM_i\to \cM_j$ making the following diagram commute.
    \[
    \xymatrix{
		B \ar[dr] \ar[d] & \\
		\cM_i \ar[r]^{\alpha_{ij}}  & \cM_j 
        }
    \]
\end{enumerate}
\end{thm}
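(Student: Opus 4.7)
The plan is to first establish the rational polyhedral decomposition in part (1) via a Shokurov-polytope-style finiteness of relative log canonical models for $(X,\vb D)\to B$ as $\vb$ varies in $\cP$, and then to construct the wall-crossing morphisms $\alpha_{ij}$ in part (2) by combining the reduction morphism of Theorem \ref{main thm:weight reduction} on normalized closures with a descent argument. The finiteness statement should follow from the MMP for normal locally stable families over a normal base that the authors develop earlier (as advertised in the abstract); it produces a finite rational polyhedral decomposition $\cP=\cup_i \cP_i$ with the property that, for $\vb$ in a single open chamber $\cP_i^\circ$, the relative log canonical model $(Z_\vb, \vb \Delta_\vb)\to B$ given by Theorem \ref{main thm:weight reduction} depends only on the chamber. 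Hence the marked family $(Z_\vb, \Delta_\vb)\to B$ is independent of $\vb\in\cP_i^\circ$, and the isomorphism $\cM_\vb \cong \cM_{\vb'}$ within an open chamber is then immediate: for generic coefficients the coordinates are pairwise distinct, any isomorphism of underlying pairs automatically preserves the marking, and so $\Phi_\vb$ and $\Phi_{\vb'}$ induce the same equivalence relation on $B$.

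For part (2), fix $\vb \in \cP_i^\circ$ and $\vb' \in \cP_j^\circ$ with $\cP_j$ a face of $\cP_i$. The main input is the reduction morphism $\overline{\cK}_\vb^{\lc}\to \overline{\cK}_{\vb'}^{\lc}$ on normalized closures produced just after Theorem \ref{main thm:weight reduction} (by applying that theorem to the universal locally stable family on $\overline{\cK}_\vb^{\lc}$). Since $B$ is normal, $\Phi_\vb$ lifts through the normalization to a morphism $B\to \overline{\cK}_\vb^{\lc}$; letting $\tilde{\cM}_i$ denote the reduced closure of its image there, the reduction morphism sends $\tilde{\cM}_i$ into the analogous $\tilde{\cM}_j\subset \overline{\cK}_{\vb'}^{\lc}$, and composing with $\tilde{\cM}_j\to \cM_j$ gives $\tilde{\cM}_i\to \cM_j$. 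To descend this to $\cM_i\to \cM_j$, I would check that the composition is constant on the fibers of the finite birational map $\tilde{\cM}_i\to \cM_i$: two points of $\tilde{\cM}_i$ over the same point of $\cM_i$ correspond to isomorphic stable pairs with coefficients $\vb$, whose log canonical models with coefficients $\vb'$ are isomorphic by the uniqueness in Theorem \ref{main thm:weight reduction}. A moduli-theoretic or seminormality argument then upgrades this to a scheme morphism. Uniqueness of $\alpha_{ij}$ is automatic since $B\to \cM_i$ is dominant and $\cM_j$ is separated.

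The main obstacle is the finiteness-of-models input in part (1): in the locally stable (non-klt) setting over an arbitrary normal (possibly non-proper) base, this goes beyond the standard BCHM / Shokurov-polytope statements and should be the main technical ingredient coming from the MMP machinery developed earlier in the paper. Given that input, the remainder of the proof is essentially formal, with most of the moduli-theoretic content absorbed into Theorem \ref{main thm:weight reduction} and the construction of $\overline{\cK}_\vb^{\lc}$.
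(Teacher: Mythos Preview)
Your plan has two genuine gaps.

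For part (1), your claim that $\cM_\vb \cong \cM_{\vb'}$ is ``immediate'' once the marked family $(Z_\vb, \Delta_\vb) \to B$ is constant in a chamber overlooks that $\cM_\vb$ is the \emph{closure} of $\Phi_\vb(B)$ in $\cK_\vb$. When $B$ is not proper, the boundary $\cM_\vb \setminus \Phi_\vb(B)$ consists of limits taken inside $\cK_\vb$, and there is no a priori reason these agree with the limits in $\cK_{\vb'}$: the universal family over $\cM_\vb$ is stable for the coefficient $\vb$, but along the boundary it need not be stable (or even locally stable) for $\vb'$, so your equivalence-relation argument does not extend. The paper handles this by compactifying $B$ via Abramovich--Karu weak semistable reduction, extending a log resolution of the family to a locally stable family over the compactification $\oB$, running the finiteness of log canonical models again over $\oB$ to obtain a possibly finer decomposition, and only then invoking the easy proper-base case (Lemma~\ref{lem:isomorphism descends to moduli spaces}). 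This compactification step is substantial, not a formality.

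For part (2), the reduction morphism $\overline{\cK}_\vb^{\lc} \to \overline{\cK}_{\vb'}^{\lc}$ you invoke does not exist in the direction you need: Theorem~\ref{main thm:weight reduction} applied to the universal family with coefficients $\vb$ only yields log canonical models for $\vb' \le \vb$, and for $\vb'$ on a face of $\cP_i$ there is in general no $\vb \in \cP_i^\circ$ with $\vb' \le \vb$. More seriously, your proposed descent from the normalization $\tilde{\cM}_i$ to $\cM_i$ via a ``seminormality argument'' cannot work as stated: $\cM_i$ is only reduced, and set-theoretic constancy on fibers of a finite birational map does not manufacture a morphism out of a merely reduced scheme. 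This is exactly the obstacle that forces \cite{ABIP23} to pass to seminormalizations. The paper avoids any descent by proving (Lemma~\ref{lem:ample model commutes with base change}) that for such families the formation of the relative ample model commutes with \emph{arbitrary reduced} base change, and (Lemma~\ref{lem:ample model stable}) that the resulting ample model is again a stable family over the reduced base $\cM_i$; both rest on an invariance-of-plurigenera statement (Lemma~\ref{lem:h^0 constant smooth base}). With these in hand one constructs $\alpha_{ij}$ directly on $\cM_i$ by taking the ample model of $K_{\cX_i/\cM_i}+\vb_j\cD_i$ for $\vb_j\in\cP_j^\circ$. That machinery over reduced bases is the real content here, and it is not a consequence of Theorem~\ref{main thm:weight reduction}.
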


The morphisms $\alpha_{ij}$ are what we refer to as the wall-crossing morphisms. Moreover, the morphism $\alpha_{ij}$ lifts to a morphism between the universal families over $\cM_i$ and $\cM_j$. Putting together Theorems \ref{main thm:weight reduction} and \ref{main thm:wall crossing}, we recover Hassett's result in the curve case. When the generic fiber of $(X,\va D)\to B$ is klt, the above theorem also improves upon \cite{ABIP23}*{Theorem 1.8}, where the wall-crossing morphisms are constructed on the seminormalization of the corresponding moduli locus.

There are also reduction and wall-crossing morphisms in the semi-log-canonical case if we impose additional assumptions. The following is an example. It is a generalization of \cite{ABIP23}*{Theorem 1.1} to the semi-log-canonical setting. For any $\va=(a_1,\ldots,a_n)$ and $\vb=(b_1,\ldots,b_n)$, we write $\vb\le\va$ if $b_i\le a_i$ for all $i$.

\begin{thm} \label{main thm:slc case}
Let $(X,\sum D_i)\to B$ be a family of marked pairs over a normal variety and let $\cP$ be a rational polytope such that $(X,\va D)\to B$ is a stable family for every $\va\in \cP$. Let $\cN_\va$ be the normalized closure of the image of the morphism $\Phi_\va\colon B\to \cK_\va$. Then there is a canonical birational reduction morphism $r_{\va ,\vb}\colon \cN_\va \to \cN_\vb$ for any $\va \ge \vb$ in $\cP$. Moreover, we have $r_{\vb,\vc}\circ r_{\va,\vb} = r_{\va,\vc}$ for any $\va \ge \vb \ge \vc$ in $\cP$, and there is a wall-and-chamber and wall-crossing structure on $\cP$ corresponding to $\cN_\va$ as in Theorem \ref{main thm:wall crossing}.
\end{thm}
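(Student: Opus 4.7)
The strategy is to reduce to the normal case via normalization of the total space, apply Theorems \ref{main thm:weight reduction} and \ref{main thm:wall crossing}, and then descend to the slc setting using Koll\'ar's gluing construction.

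By hypothesis $(X,\va D)\to B$ is stable for every $\va\in \cP$, so all moduli maps $\Phi_\va\colon B\to \cK_\va$ exist, and since $B$ is normal they factor through dominant morphisms $B\to \cN_\va$. Set-theoretically I define $r_{\va,\vb}$ by $\Phi_\va(b)\mapsto \Phi_\vb(b)$: any isomorphism of marked stable pairs $(X_{b_1},\va D_{b_1})\cong (X_{b_2},\va D_{b_2})$ preserves each marked component $D_i$, and therefore induces an isomorphism of the rescaled pairs $(X_{b_1},\vb D_{b_1})\cong (X_{b_2},\vb D_{b_2})$. So the candidate map is at least well-defined on the dense image $\Phi_\va(B)\subset \cN_\va$.

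To upgrade this set-theoretic recipe to an actual morphism, I pass to the normalization $\nu\colon \tX\to X$ with conductor divisor $C$ on $\tX$. The family $(\tX,\va \tD+C)\to B$, where $\tD$ denotes the strict transform of $D$, is a locally stable family of normal log canonical pairs with normal generic fiber, and stability of $(X,\va D)\to B$ forces $(\tX,\va \tD+C)\to B$ to be stable for every $\va\in \cP$ (via the conductor formula $\nu^*(K_X+\va D)=K_{\tX}+\va\tD+C$). Theorems \ref{main thm:weight reduction} and \ref{main thm:wall crossing} now apply to this normal family with the coefficient on $C$ held fixed at $1$, producing for each $\va\ge \vb$ in $\cP$ a reduction morphism between the corresponding normalized closures of the moduli images in the log canonical moduli, together with the composition property and the wall-and-chamber decomposition of $\cP$.

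Finally I descend these morphisms to the slc setting via Koll\'ar's gluing theory. A stable slc pair is equivalent to the data of its normalization — a log canonical pair with the conductor as additional reduced boundary — together with a gluing involution $\sigma$ on the conductor, and the slc moduli is recovered as a universal geometric quotient by the gluing equivalence relation. Because the reduction morphism constructed above only rescales the coefficients on $\tD$ while leaving $\tX$, $C$, and $\sigma$ unchanged, it is equivariant for the quotient and descends uniquely to a morphism $r_{\va,\vb}\colon \cN_\va\to \cN_\vb$ extending the set-theoretic map; the composition property $r_{\vb,\vc}\circ r_{\va,\vb}=r_{\va,\vc}$ and the wall-crossing structure on $\cP$ are then inherited from their counterparts on the normalized side. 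The main obstacle I expect is the rigorous execution of this descent — in particular, matching the normalized closures $\cN_\va$ with the slc-moduli side of Koll\'ar's quotient construction and verifying that the gluing equivalence relation is preserved by the reduction morphism produced by Theorem \ref{main thm:weight reduction}, so that the slc moduli genuinely inherits $r_{\va,\vb}$ as a morphism of varieties rather than just a set-theoretic map.
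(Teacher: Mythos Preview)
Your proposal has a genuine structural gap. The crucial issue is that applying Theorems \ref{main thm:weight reduction} and \ref{main thm:wall crossing} to the normalized family $(\tX,\va\tD+C)\to B$ produces reduction and wall-crossing morphisms between the normalized image closures in the moduli of \emph{log canonical} stable pairs (with the conductor as an extra marked divisor of coefficient $1$), call them $\cN'_\va$. These are not the same stacks as the $\cN_\va$ in the theorem, which live in the slc moduli $\cK_\va$. To descend, you would need to relate $\cN'_\va$ to $\cN_\va$ and show that the morphisms $\cN'_\va\to\cN'_\vb$ factor through this relation; but the map $\cN'_\va\to\cN_\va$ given by gluing need not be an isomorphism (different choices of involution on the conductor can produce non-isomorphic slc pairs with isomorphic normalizations), so the descent is not formal. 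You yourself flag this as the ``main obstacle,'' and indeed it is where the argument stops short of being a proof.

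The paper sidesteps this entirely by a more direct route: it works not over $B$ but over $\cN_\va$ itself, pulling back the universal family $(\cX_{\cN_\va},\va\cD_{\cN_\va})\to\cN_\va$. Since $\cN_\va$ is normal and the generic fiber of $(\cX_{\cN_\va},\vb\cD_{\cN_\va})\to\cN_\va$ has an slc model (being stable), Theorem \ref{thm:slc model exist} gives a stable family over $\cN_\va$ with coefficient vector $\vb$, hence a morphism $r_{\va,\vb}\colon\cN_\va\to\cN_\vb$ directly. This is the content of Theorem \ref{thm:wall crossing general setup}(1). The wall-and-chamber structure follows from parts (2) and (3) of the same theorem. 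Normalization and Koll\'ar's gluing are indeed used, but inside the proof of Theorem \ref{thm:slc model exist}, applied to the universal family over $\cN_\va$ rather than over $B$; this is what makes the output a morphism of moduli rather than something that still needs to be descended. Finally, note that you have not addressed why $r_{\va,\vb}$ is \emph{birational}; the paper invokes \cite{ABIP23}*{Proposition 7.7} for this, and the composition identity $r_{\vb,\vc}\circ r_{\va,\vb}=r_{\va,\vc}$ is then checked on a dense open where all three families are already stable, using normality and separatedness.
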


\subsection{Other results}

The proof of the main theorems relies on several other results that might be of independent interest. Some of these results are already known in the klt case, but the extension to the log canonical case is more subtle.

A famous conjecture in birational geometry is that every log canonical pair of nonnegative Kodaira dimension has a good minimal model (Definition \ref{defi:good minimal model}). For klt pairs of log general type this follows from \cite{BCHM10}, but if the pair is only log canonical then the conjecture is wide open even in the log general type case. This presents one of the difficulties in the construction of the reduction morphisms in the log canonical setting. The first ingredient that goes into our proof of the main theorems is the existence of good minimal models for those special log canonical pairs that come up in Theorem \ref{main thm:weight reduction}. In fact we prove a more general statement that abundance is preserved when reducing coefficients away from zero.

\begin{thm}[=Theorem \ref{thm:gmm smaller coef}] \label{main thm:gmm smaller coef}
Let $(X,D)$ be a log canonical pair over $B$ and let $0\le \Delta\le D$ be an $\R$-divisor. Assume that $(X,D)$ has a good minimal model over $B$, $K_X+\Delta$ is $\R$-Cartier, and $\Supp(\Delta)=\Supp(D)$. Then $(X,\Delta)$ has a good minimal model or a Mori fiber space over $B$.
\end{thm}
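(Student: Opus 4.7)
The plan is to run a $(K_X+\Delta)$-MMP over $B$ with scaling built from $E := D-\Delta$, and to use the hypothesis that $(X,D)=(X,\Delta+E)$ admits a good minimal model over $B$ both to make the MMP terminate and to deduce abundance on the output. The support condition $\Supp(\Delta)=\Supp(D)$, equivalent to $\Supp(E)\subseteq\Supp(\Delta)$, is the essential extra input that allows abundance for $(X,D)$ to descend to abundance for $(X,\Delta)$.

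First I would replace $(X,D)$ by a $\Q$-factorial dlt modification $g\colon(X',D')\to(X,D)$ with $D':=g^{-1}_*D+\Exc(g)$, and set $\Delta':=g^{-1}_*\Delta+\Exc(g)\le D'$. Then $(X',\Delta')$ is dlt, $\Supp(\Delta')=\Supp(D')$, and $(X',D')$ inherits a good MM over $B$; moreover any good MM or MFS of $(X',\Delta')$ descends to one of $(X,\Delta)$ because every $g$-exceptional divisor is a component of $\Delta'$. On $X'$ I then run a $(K_{X'}+\Delta')$-MMP over $B$ with scaling of $E'$, augmented if necessary by a small ample perturbation to provide a nef starting divisor. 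Each step is an extremal contraction that is $\bigl(K_{X'}+(1-\lambda)\Delta'+\lambda D'\bigr)$-trivial for some $\lambda\in[0,1]$, so existence of the required flips and termination both follow from the good MM of $(X',D')$ by the usual MMP-with-scaling arguments of Birkar and Hacon--Xu, together with their lc refinements by Hashizume--Hu. If $K_{X'}+\Delta'$ is not pseudoeffective over $B$, the scaling parameter descends through $0$ before a minimal model is reached and the MMP ends with a Mori fiber space; otherwise it terminates with a log minimal model $(Y,\Delta_Y)$ on which $K_Y+\Delta_Y$ is nef and the pushforward $K_Y+D_Y$ is still semiample over $B$.

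The main obstacle is the abundance step, namely upgrading $K_Y+\Delta_Y$ from nef to semiample over $B$. General lc abundance is open, so the argument must exploit the decomposition $K_Y+\Delta_Y=(K_Y+D_Y)-E_Y$, in which $K_Y+D_Y$ is semiample, $E_Y\ge 0$, and $\Supp(E_Y)\subseteq\Supp(\Delta_Y)$. Under this support containment one can analyze $E_Y$ along the Iitaka fibration induced by $K_Y+D_Y$, and apply semiampleness-propagation techniques for lc pairs (in the spirit of Fujino and Hashizume) to obtain semiampleness of $K_Y+\Delta_Y$, hence the desired good MM for $(X,\Delta)$. A secondary technical point is that $B$ is only assumed normal, not proper, so each MMP-theoretic input must be invoked in the relative setting; this is presumably prepared in the earlier sections of the paper establishing MMP for locally stable families.
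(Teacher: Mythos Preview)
Your proposal has two substantive gaps.

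The first concerns the MMP you run. The claim that on the output $Y$ ``the pushforward $K_Y+D_Y$ is still semiample over $B$'' has no basis: the hypothesis is only that $(X',D')$ has a good minimal model over $B$, not that $K_{X'}+D'$ is semiample, so there is nothing for ``still'' to refer to. Even if you tacitly first pass to a good minimal model of $(X',D')$, a subsequent $(K+\Delta')$-flip that is strictly $(K+D')$-positive will in general destroy nefness of $K+D'$ on the flipped model. Termination of the MMP with scaling of $E'$ is likewise not justified: $E'$ is not ample, and the ``small ample perturbation'' you invoke decouples the scaling divisor from $D'$, so the good minimal model of $(X',D')$ no longer controls the process. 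The standard termination results you cite require either an ample scaling divisor or a priori knowledge that $(X',\Delta')$ already has a minimal model, which is precisely what you are trying to prove. (Incidentally, the remark that the relative setting over a non-proper $B$ needs the paper's later ``MMP for locally stable families'' is a misreading: this theorem is ordinary relative lc MMP, and that later section treats a different situation where the total space need not be lc.)

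The second and more serious gap is the abundance step, where the real content lies; your appeal to unspecified ``semiampleness-propagation techniques in the spirit of Fujino and Hashizume'' is a placeholder, not an argument. The paper proceeds differently and concretely. After replacing $X$ by a log resolution, it arranges a \emph{morphism} $\pi$ to a weak log canonical model $Z$ of $(X,D)$ and runs a $(K_X+\Delta)$-MMP \emph{over $Z$}, not over $B$: writing $K_X+\Delta\sim_{\R,Z}M^+-M^-$ with $M^+$ exceptional over $Z$, Birkar's negativity-lemma argument contracts $M^+$ in finitely many steps, landing on a model $Y$ where some $K_Y+D'$ with $\Delta_Y\le D'\le D_Y$ and $\Supp(\Delta_Y)=\Supp(D')$ is pulled back from $Z$, hence genuinely semiample over $B$. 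That special case is then handled by a separate lemma: the support condition lets one write $\Delta_Y=t\Gamma+(1-t)D'$ as a \emph{strict} convex combination with $0\le\Gamma\le\Delta_Y$, so that $K_Y+\Delta_Y\sim_{\R,B}t(K_Y+\Gamma+\varphi^*H)$, where $\varphi$ is the ample model of $K_Y+D'$ over $B$ and $H$ is ample over $B$. Since $K_Y+\Gamma+(D'-\Gamma)$ is $\R$-trivial over the target of $\varphi$, Birkar's log-Calabi--Yau theorem gives $(Y,\Gamma)$ a good minimal model there, and Hashizume--Hu then upgrade this to a good minimal model of $(Y,\Gamma+\varphi^*H)$ over $B$. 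The strictness of the convex combination is exactly what buys the ample $H$, and this is the mechanism your sketch is missing.
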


This implies Theorem \ref{main thm:weight reduction} when the base $B$ is a single point. In general, we need to show that the log canonical model construction works in locally stable families. Thus the next ingredient is a minimal model program (MMP) for such families.

\begin{thm}[=Theorem \ref{thm:MMP locally stable family}] \label{main thm:MMP locally stable family}
Let $f\colon (X,D)\to B$ be a locally stable family over a normal variety with normal generic fiber. Then for any $f$-ample $\R$-divisor $H$ on $X$, we may run the $(K_{X/B}+D)$-MMP over $B$ with scaling of $H$. Moreover, the MMP terminates with a good minimal model over $B$ if the generic fiber has a good minimal model.
\end{thm}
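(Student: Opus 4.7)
My plan is to reduce the MMP for the family to an MMP on the generic fiber combined with a spreading-out argument, using Kollár's criterion for local stability to propagate control from the fibers to the total space. By a standard gluing argument one may assume that $B$ is affine, and since the generic fiber of $f$ is normal and $f$ is flat with slc fibers, one can reduce to the case where $X$ itself is normal.

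To set up the MMP, note that $K_{X/B}+D$ is $\R$-Cartier by the definition of a locally stable family, and the generic fiber $(X_\eta,D_\eta)$ is log canonical. To obtain a log canonical pair on the total space, I would choose a sufficiently general effective $\R$-Cartier divisor $B_0\ge 0$ on $B$ so that $(X,D+f^*B_0)$ is log canonical; since $f^*B_0$ is $f$-trivial, running the $(K_{X/B}+D)$-MMP over $B$ with scaling of $H$ is equivalent to running the $(K_X+D+f^*B_0)$-MMP over $B$ with scaling of $H$. Existence of each MMP step then follows from the standard cone, contraction, and flip theorems for log canonical pairs.

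The main obstacle is showing that each step $\phi\colon X\dashrightarrow X'$ of this MMP preserves the locally stable structure of the family. Here I would apply Kollár's characterization: $(X',D')\to B$ is locally stable iff $K_{X'/B}+D'$ is $\R$-Cartier and every fiber $(X'_b,D'_b)$ is slc. The $\R$-Cartier property is built into the MMP step. The fiber condition is the delicate point: the contracted or flipped locus, restricted to any fiber $X_b$, corresponds to a step of the fiberwise $(K_{X_b}+D_b)$-MMP with scaling of $H|_{X_b}$, and one must verify that each such fiberwise step preserves the slc condition (using local stability and adjunction on the fibers, together with the fact that the step is $f$-relative so that no components of a fiber are contracted). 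This is where the normality of the generic fiber and the locally stable hypothesis are essential, as they ensure the fiberwise divisor classes behave well and descend into a global step.

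For termination, suppose the generic fiber has a good minimal model. Then the $(K_{X_\eta}+D_\eta)$-MMP with scaling of $H|_{X_\eta}$ on the generic fiber terminates. Any step of the global MMP that is nontrivial on the generic fiber corresponds to such a generic-fiber step, so only finitely many such steps occur. After them, the remaining MMP steps are supported over a proper closed subset $Z\subsetneq B$; applying Noetherian induction on $\dim B$ to the locally stable restriction $(X_Z,D_Z)\to Z$ yields overall termination. Finally, to upgrade the resulting minimal model $(X^{\mathrm{m}},D^{\mathrm{m}})\to B$ to a good minimal model, I would invoke Kollár's semi-ampleness theorem for locally stable families: semi-ampleness of $K_{X^{\mathrm{m}}_\eta}+D^{\mathrm{m}}_\eta$ on the generic fiber ascends to relative semi-ampleness of $K_{X^{\mathrm{m}}/B}+D^{\mathrm{m}}$ over $B$.
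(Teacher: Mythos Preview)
Your proposal has a fundamental gap at the second paragraph. You claim that one can choose an effective $\R$-Cartier divisor $B_0\ge 0$ on $B$ so that $(X,D+f^*B_0)$ is log canonical, and then invoke the standard cone/contraction/flip package for lc pairs. But this is exactly the obstacle the theorem is designed to overcome: when $B$ is merely normal, its singularities can be arbitrarily bad (not $\Q$-Gorenstein, not lc, etc.), and these pull back to $X$. Even though $K_{X/B}+D$ is $\R$-Cartier and the fibers are slc, there is no reason for $K_X+D$ to be $\R$-Cartier, and no choice of $B_0$ will make $(X,D+f^*B_0)$ log canonical in general. The paper states this explicitly in the introduction: ``the singularities of the pair $(X,D)$ in the above theorem can be much worse, because the base $B$ may have very bad singularities.'' So you cannot run the lc MMP directly on the total space.

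The paper's approach is genuinely different: it passes to a resolution $\tB\to B$, where Theorem \ref{thm:locally stable smooth base} guarantees that the base-changed pair $(\tX,\tD)$ \emph{is} log canonical, runs the MMP there, and then proves that every step \emph{descends} back to $B$. The descent is the real content: extremal contractions descend via an invariance-of-plurigenera statement for good minimal models of locally stable families (Lemmas \ref{lem:h^0 constant smooth base} and \ref{lem:ample model commutes with base change}), and flips descend via a relative small $\R$-Cartier modification (Lemma \ref{lem:small Q-Cartier modification over normal base}), which is where normality of $B$ is actually used. Your sketch also has secondary issues: the claim that global MMP steps restrict to fiberwise MMP steps is precisely what needs proof (it is Lemma \ref{lem:ample model commutes with base change}, not a triviality); the Noetherian induction for termination does not work as written, since once the generic-fiber MMP stops the remaining steps are still global over $B$, not supported over a proper closed subscheme; and the ``Koll\'ar semi-ampleness theorem for locally stable families'' you invoke is not an existing result but rather the content of Lemmas \ref{lem:h^0 constant smooth base}--\ref{lem:ample model commutes with base change}.
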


Although MMP with scaling for log canonical pairs is known by \cites{Amb03,Fuj11,Bir12-lc-flip,HX13,HH20}, the singularities of the pair $(X,D)$ in the above theorem can be much worse, because the base $B$ may have very bad singularities. On the other hand, $(X,D)$ can be made log canonical after a base change. The above theorem is therefore a descent statement: in particular, we show that extremal contractions (resp. flips) descend along arbitrary reduced (resp. normal) base change. Ultimately, these rely on, among other things, the invariance of plurigenera for good minimal models of locally stable families. We remark that when the generic fiber of the family is klt, some special cases of Theorem \ref{main thm:MMP locally stable family} are implicit in \cite{ABIP23} when they analyze the invertibility of certain special wall-crossing morphisms. Using Theorem \ref{main thm:MMP locally stable family} we are thus able to give a more direct and intrinsic construction of the reduction morphisms (that is, without analyzing the aforementioned wall-crossing morphisms) even in the klt case.

Finally, to obtain the wall-and-chamber structure in Theorem \ref{main thm:wall crossing}, the third ingredient is the finiteness of ample models as the coefficients vary, see Theorem \ref{thm:chamber decomp by ample model}. For klt pairs of log general type, this again follows from \cite{BCHM10}. We refer to Section \ref{sec:Finiteness of ample models} for the relevant setup and the precise statement. 

\subsection{Outline}

This paper is organized as follows. Section \ref{sec:preliminaries} covers some preliminary definitions and statements. Section \ref{sec:MMP results} studies the existence of good minimal models for (some) log canonical pairs, Section \ref{sec:MMP locally stable} establishes the MMP with scaling for locally stable families, while Section \ref{sec:Finiteness of ample models} deals with the finiteness of ample models. The main theorems are proved in Section \ref{sec:wall crossing}, and Section \ref{sec:examples} collects some examples to illustrate why some of the assumptions in our main results are optimal.

\subsection*{Acknowledgements}

The authors would like to thank Dori Bejleri, Giovanni Inchiostro, Vyacheslav Shokurov and Chenyang Xu for helpful discussions and comments. The second author is partially supported by a Clay research fellowship, a Sloan fellowship, and the NSF (DMS-2240926, DMS-2234736). 

\section{Preliminaries} \label{sec:preliminaries}

We work over an algebraically closed field $k$ of characteristic $0$. All schemes are of finite type over $k$, unless otherwise stated.

\subsection{Notation and conventions}

If $f\colon Y\dashrightarrow X$ is a birational map and $D$ is an $\R$-divisor on $X$, we denote its strict transform $f_*^{-1} D$ on $Y$ by $D_Y$. If $D$ is $\R$-Cartier, its \emph{birational pullback} $f^*D$ is defined as $p_*q^*D$ where $p\colon Z\to Y$, $q\colon Z\to X$ is a common resolution (the definition is independent of the choice of the common resolutions).

If $X\to B$ is a projective morphism of normal quasi-projective varieties and $D\ge 0$ is an $\R$-divisor on $X$, we call $(X,D)$ a \emph{pair over $B$}.

\subsection{Singularities of pairs}

For basic definitions of singularities of pairs, we refer to \cite{KM98} and \cite{Kol13}. Occasionally we make use of generalized pairs, and we refer to \cite{BZ16}*{Section 4} for their definition and basic properties. In this subsection, we recall several definitions regarding stable pairs.

\begin{defi}
A scheme $X$ is called \emph{demi-normal} if it is $S_2$ and its codimension $1$ points are either regular points or nodes.
\end{defi}

\begin{defi}
Let $\pi\colon\bar{X}\to X$ be the normalization of a demi-normal scheme $X$. The \emph{conductor ideal}
$$\mathcal{H}om_{\cO_{X}}(\pi_*\cO_{\bar{X}},\cO_X)\subset\cO_X$$
defines reduced, pure codimension $1$ and closed subschemes $\Gamma\subset X$ and $\bar{\Gamma}\subset\bar{X}$ which are called the \emph{conductors} or the \emph{double loci}. 
\end{defi}

\begin{defi}
Let $X$ be a demi-normal scheme with normalization $\pi\colon\bar{X}\to X$ and conductors $\Gamma\subset X$ and $\bar{\Gamma}\subset\bar{X}$. Let $D$ be an effective $\R$-divisor on $X$ whose support does not contain any irreducible component of $\Gamma$ and $\bar{D}$ the divisorial part of $\pi^{-1}(D)$. The pair $(X,D)$ is called \emph{semi-log-canonical} (or \emph{slc}) if
\begin{enumerate}	
    \item[(1)] $K_X+D$ is $\R$-Cartier, and

    \item[(2)] $(\bar{X},\bar{D}+\bar{\Gamma})$ is log canonical.
\end{enumerate}
Moreover, if $X$ is proper and $K_X+D$ is ample, $(X,D)$ is called a \emph{stable} pair.
\end{defi}

\subsection{Models}

We need to deal with different kinds of models of log pairs for our purpose. In this subsection, we recall their definitions.

\begin{defi}\label{bigness}
Let $f\colon X\to B$ be a projective morphism of quasi-projective varieties. Let $D$ be an $\R$-divisor on $X$ such that $X$ is smooth at the generic points of $D$. We say $D$ is \emph{big over $B$} or \emph{$f$-big} if
$$\limsup\frac{h^0(F,\cO_F(\llcorner mD\lrcorner) )}{m^{\dim F}}>0$$
for the generic fiber $F$ of $f$. Equivalently, $D$ is big over $B$ if $D\sim_{\R, B}A+E$, where $A$ is ample over $B$ and $E\ge0$.
\end{defi}

First we recall the existence of dlt modifications which is useful.

\begin{thm}[\cite{KK10}*{Theorem 3.1},\cite{Fuj11}*{Theorem 10.5}]\label{dlt modification}
Let $(X,D)$ be a log canonical pair where $X$ is a normal quasi-projective variety. Then there exists a projective birational morphism $f\colon Y\to X$ from a normal quasi-projective variety $Y$ such that
\begin{enumerate}
	\item[(1)] $Y$ is $\Q$-factorial.
	
	\item[(2)] Let $E$ be the sum of all the $f$-exceptional divisors. Then $(Y, D_Y +E)$ is dlt and $K_Y+D_Y+E=f^*(K_X+D)$.
\end{enumerate}
\end{thm}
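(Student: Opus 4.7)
The plan is to realize $Y$ as the output of a relative minimal model program starting from a log resolution of $(X,D)$.

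First, I would take a log resolution $g\colon W\to X$ of $(X,D)$ with exceptional prime divisors $E_i$, so that $(W,\, g_*^{-1}D+\sum_i E_i)$ is log smooth. The crepant formula
\[
K_W+g_*^{-1}D+\sum_i E_i \;=\; g^*(K_X+D)+\sum_i(1+a_i)\,E_i,
\]
where $a_i:=a(E_i;X,D)\ge -1$, shows that the right-hand correction is an effective, $g$-exceptional $\R$-divisor. I would then run a $(K_W+g_*^{-1}D+\sum_i E_i)$-MMP over $X$ with scaling of a $g$-ample $\R$-divisor. Since, relatively over $X$, this MMP is equivalent to MMPing the effective $g$-exceptional divisor $\sum_i(1+a_i)E_i$, the standard termination statement for MMPs of effective exceptional divisors guarantees the program terminates and contracts precisely the $E_i$ with $a_i>-1$. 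To fit the dlt (rather than klt) pair $(W,\,g_*^{-1}D+\sum_i E_i)$ into the BCHM framework, one perturbs the coefficients of the $E_i$ with $a_i=-1$ downward by a small amount, yielding a klt pair with big boundary over $X$, and checks that the MMP output is independent of the perturbation.

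Let $f\colon Y\to X$ be the resulting birational morphism. Then $Y$ is $\Q$-factorial, since $\Q$-factoriality is preserved under MMP steps and $W$ is smooth, which gives (1). By the description of what is contracted, the $f$-exceptional prime divisors are exactly the strict transforms of the $E_i$ with $a_i=-1$. Letting $E$ denote their sum, the crepant identity descends to $K_Y+D_Y+E=f^*(K_X+D)$, and $(Y,D_Y+E)$ is dlt because MMP steps applied to a log smooth pair preserve dltness. This yields (2).

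The principal obstacle is the termination of the MMP in the dlt setting, together with the independence of the final model from the auxiliary perturbation used to reduce to the klt case: BCHM only directly handles klt pairs with big boundary, whereas $(W,\,g_*^{-1}D+\sum_i E_i)$ is only dlt. Overcoming this requires combining MMP with scaling, termination of MMPs for effective exceptional divisors, and a careful limit argument across the perturbation of coefficients. These are precisely the ingredients assembled in \cite{KK10} and \cite{Fuj11}.
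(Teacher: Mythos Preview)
The paper does not give its own proof of this statement: it is quoted from \cite{KK10}*{Theorem 3.1} and \cite{Fuj11}*{Theorem 10.5} without argument. Your sketch is essentially the standard proof found in those references---take a log resolution, run a relative MMP with scaling (reducing to BCHM via a small perturbation of the reduced exceptional boundary), and use termination for effective exceptional divisors to conclude---so there is nothing to compare.
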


The morphism $f$ in the above theorem is called a \emph{dlt modification} of $(X,D)$, and $(Y, D_Y+E)$ is called a \emph{dlt model} of $(X,D)$.

Next, we recall the definition of good minimal models and log canonical models.

\begin{defi} \label{defi:good minimal model}
Let $(X,D)$ be a log canonical pair over $B$. Suppose that $\pi\colon X\dashrightarrow Y$ is a birational contraction over $B$. Then $(Y,D_Y)$ is called a \emph{minimal model of $(X,D)$ over $B$} if $K_Y+D_Y$ is $\R$-Cartier and nef over $B$, and 
\[a(E,X,D)<a(E,Y,D_Y)\]
for every $\pi$-exceptional divisor $E$ on $X$. It is called a \emph{good minimal model of $(X,D)$ over $B$} if $K_Y+D_Y$ is semiample over $B$. If in addition $K_Y+D_Y$ is also big over $B$, the ample model of $K_Y+D_Y$ over $B$ is called the \emph{log canonical model of $(X,D)$ over $B$}.
\end{defi}

For ease of notation, we will sometimes simply say $Y$ is a minimal (resp. log canonical) model of $(X,D)$ over $B$, as the boundary divisor $D_Y$ is uniquely determined as the strict transform of $D$.

\begin{rem} \label{rem:gmm indep of model}
If in the above definition we only require that $a(E,X,D)\le a(E,Y,D_Y)$, then the model $(Y,D_Y)$ is called a \emph{weak log canonical model of $(X,D)$ over $B$}. Every minimal model is clearly a weak log canonical model. The log canonical model, if exists, is also a weak log canonical model (as the name suggests). Any two weak log canonical models are $K$-equivalent (i.e. the pullbacks of $K_Y+D_Y$ are the same on any common resolution of the models, see e.g. \cite{KM98}*{Proof of Theorem 3.52}), thus the semiampleness of $K_Y+D_Y$ over $B$ is independent of the weak log canonical model. In particular, if $(X,D)$ has a good minimal model over $B$, then every minimal model of $(X,D)$ over $B$ is good, and the log canonical model is unique, c.f. \cite{KM98}*{Corollary 3.53}, \cite{Bir12-lc-flip}*{Remark 2.7}, \cite{HX13}*{Lemma 2.4}, \cite{LM21}*{Lemma 2.6}.
\end{rem}

\begin{rem}
There are several different definitions of minimal models in the literature. When $(X,D)$ is dlt, the above definition coincides with \cite{KM98}*{Definition 3.50}. Many authors (e.g. \cite{HX13}*{p.165}) further require that the minimal models be $\bQ$-factorial. In many applications this is automatic since the MMP starts with a $\bQ$-factorial dlt pair, but in order to run the MMP for locally stable families in Section \ref{sec:MMP locally stable} we need to drop the $\bQ$-factoriality assumption (c.f. \cite{Fuj11} and \cite{Kol21-MMP-non-Q-factorial}). In \cite{Bir12-lc-flip} and \cite{HH20}, the authors consider ($\bQ$-factorial dlt) minimal models $Y$ over $B$ for which the birational map $X\dashrightarrow Y$ is not necessarily a contraction. By \cite{HH20}*{Theorem 1.7}, if $(X,D)$ has a minimal model over $B$ under their definition, then it also has a minimal model over $B$ under our definition. On the other hand, if $(X,D)$ has a minimal model $(Y,D_Y)$ over $B$ under our definition, then any dlt modification of $(Y,D_Y)$ is a minimal model of $(X,D)$ over $B$ under their definition. The minimal models under these various definitions are all $K$-equivalent, hence if one of them is good then so are the others (as in the previous remark). Since we are mainly interested in the existence of (good) minimal models and log canonical models, these different definitions are essentially equivalent for our purpose.
\end{rem}

\begin{lemma} \label{lem:compare gmm existence on bir model}
Let $(X,D)$ and $(Y,\Gamma)$ be log canonical pairs over $B$, and $\pi\colon Y\dashrightarrow X$ a birational contraction over $B$ such that
\begin{equation} \label{eq:K_Y+D_Y=K_X+D+exceptional}
    K_Y+\Gamma=\pi^*(K_X+D)+E
\end{equation}
for an effective $\pi$-exceptional divisor $E$. Then $(Y,\Gamma)$ has a good minimal model (resp. log canonical model) over $B$ if and only if $(X,D)$ does, and the two pairs have the same log canonical model over $B$.
\end{lemma}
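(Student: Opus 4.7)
The strategy is to use the defining relation $K_Y+\Gamma=\pi^*(K_X+D)+E$, with $E\ge 0$ and $\pi$-exceptional, to identify the relative log canonical rings of the two pairs and to transfer the existence of good minimal models via standard MMP input.

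First, I would verify the identification of section rings. Passing to a common resolution of $\pi$ if necessary, and combining the projection formula with the standard identity $\pi_*\cO_Y(\lfloor mE\rfloor)=\cO_X$ for an effective $\pi$-exceptional divisor, one obtains
\[
\pi_*\cO_Y(\lfloor m(K_Y+\Gamma)\rfloor) \cong \cO_X(\lfloor m(K_X+D)\rfloor)
\]
for every positive integer $m$ (with the relevant divisors integral), hence an isomorphism of relative log canonical rings $R(Y/B,\,K_Y+\Gamma)\cong R(X/B,\,K_X+D)$. Since the log canonical model over $B$, when it exists, is the relative $\operatorname{Proj}_B$ of this ring (equivalently, the ample model of $K+D$ over $B$), the log canonical model part of the lemma follows at once: the two pairs have log canonical models over $B$ simultaneously, and the two models coincide as $B$-schemes. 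The boundary divisors match because $\Gamma-\pi^{-1}_*D$ is $\pi$-exceptional and hence contracted to the common model.

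For the good minimal model part, I would proceed via weak log canonical models. Given a good minimal model $(X',D_{X'})$ of $(X,D)$ over $B$ with contraction $\rho\colon X\dashrightarrow X'$, the composition $\sigma=\rho\circ\pi\colon Y\dashrightarrow X'$ is a birational contraction with $\Gamma_{X'}=D_{X'}$. A discrepancy comparison, namely $a(F,Y,\Gamma)=a(F,X,D)-\operatorname{coeff}_F(E)\le a(F,X,D)$ for $\pi$-exceptional $F$ and $a(F,Y,\Gamma)=a(\pi_*F,X,D)$ otherwise, combined with the negativity lemma applied to the weak log canonical model $X'$ of $(X,D)$, gives $a(F,Y,\Gamma)\le a(F,X',D_{X'})$ for every $\sigma$-exceptional prime divisor $F$ on $Y$. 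Thus $(X',\Gamma_{X'})$ is a weak log canonical model of $(Y,\Gamma)$ over $B$ with semiample log canonical divisor, and by the standard existence criterion for good minimal models under such a hypothesis (\cite{HX13}, \cite{Bir12-lc-flip}, \cite{HH20}), $(Y,\Gamma)$ has a good minimal model over $B$. The reverse direction is symmetric in spirit: from a good minimal model $(Y',\Gamma')$ of $(Y,\Gamma)$, one descends to a weak log canonical model of $(X,D)$ with semiample log canonical divisor by running a further MMP over $X$ with scaling that contracts any residual $\pi$-exceptional components, again producing a good minimal model of $(X,D)$ by the same criterion.

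The main subtlety is the termination of this descending MMP in the reverse direction, where one must contract the $\pi$-exceptional components potentially surviving in $Y'$ via an MMP over $X$ with scaling. This relies on standard MMP termination results for lc pairs in the relative setting, as in the cited references.
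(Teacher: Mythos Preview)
Your approach differs from the paper's, and the reverse direction has a genuine gap.

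The paper does not separate the two directions. Instead it takes a common log resolution $p\colon Z\to Y$, $q\colon Z\to X$ and sets $\Delta=\Gamma_Z+F$ with $F$ the sum of the $p$-exceptional divisors. One then checks that $K_Z+\Delta$ differs from both $p^*(K_Y+\Gamma)$ and $q^*(K_X+D)$ by effective exceptional divisors, so the morphism case (which is \cite{Has19}*{Lemma 2.15}) applies symmetrically: $(Y,\Gamma)$ has a good minimal model over $B$ iff $(Z,\Delta)$ does iff $(X,D)$ does. This avoids any asymmetry between the two directions and any appeal to termination.

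Your forward direction is essentially correct in outline: a good minimal model of $(X,D)$ is indeed a weak log canonical model of $(Y,\Gamma)$ with semiample log canonical divisor, and from there one can extract a good minimal model of $(Y,\Gamma)$. But the result you invoke (``weak lc model with semiample $K+\Delta$ $\Rightarrow$ existence of a good minimal model'') is not literally any of the theorems you cite; it requires an additional short argument (e.g.\ via \cite{Has19}*{Lemma 2.15} after passing to a resolution, or via \cite{Bir12-lc-flip}).

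The reverse direction, however, does not go through as written. You propose to ``run a further MMP over $X$ with scaling that contracts any residual $\pi$-exceptional components'' and say this ``relies on standard MMP termination results for lc pairs.'' Termination of the MMP for lc pairs is an open problem, so this is not available. What you would actually need is a targeted statement such as \cite{Bir12-lc-flip}*{Theorem 3.5} (Theorem~\ref{thm:birkar MMP contract exceptional} in the paper), which guarantees that an MMP with scaling contracts a prescribed effective exceptional divisor after finitely many steps. Even then, your sketch is unclear about which variety the MMP is run on: after passing to the good minimal model $Y'$ of $(Y,\Gamma)$, the map $Y'\dashrightarrow X$ need not be a morphism or even a contraction, so ``MMP over $X$'' is not well posed without first passing to a suitable resolution. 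Once you do that, you have essentially reproduced the paper's common-resolution argument.

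A smaller point: your section ring identification is fine for $\bQ$-divisors, but the paper works with $\R$-divisors throughout, and there the log canonical model is \emph{defined} as the ample model of a good minimal model (Definition~\ref{defi:good minimal model}), not as $\Proj$ of a graded ring. So the ``log canonical model part'' is not really separable from the good minimal model part in this setting; once both pairs have good minimal models, equality of the log canonical models follows from $K$-equivalence of weak log canonical models (Remark~\ref{rem:gmm indep of model}).
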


In particular, if $Y\dashrightarrow X$ is a $(K_Y+\Gamma)$-MMP over $B$ then $(Y,\Gamma)$ has a good minimal model over $B$ if and only if $(X,\Gamma_X)$ has a good minimal model over $B$, c.f. \cite{Has19}*{Remark 2.16}. The same also applies if $(X,D)$ is a weak log canonical model of $(Y,\Gamma)$ over $B$.

\begin{proof}
This is well-known to experts. The assumption \eqref{eq:K_Y+D_Y=K_X+D+exceptional} implies that $K_X+D$ is big over $B$ if and only if $K_Y+\Gamma$ is big over $B$. Hence if $\pi$ is a morphism, the statement follows from \cite{KM98}*{Corollary 3.53} and \cite{Has19}*{Lemma 2.15} (see also \cite{HX13}*{Lemma 2.10} and \cite{Bir12-lc-flip}*{Remark 2.8}). In the general case, choose a common log resolution
\[
\xymatrix{
& Z \ar[ld]_\varphi \ar[rd]^\psi & \\
Y \ar@{-->}[rr]^\pi & & X\\
}
\]
and consider the divisor $\Delta=\Gamma_Z+F$ on $Z$, where $F$ is the sum of the $\varphi$-exceptional divisors. Since $(Y,\Gamma)$ is log canonical, we have
\[K_Z+\Delta = \varphi^*(K_Y+\Gamma)+E_1\]
for some $\varphi$-exceptional divisor $E_1\ge 0$. By \eqref{eq:K_Y+D_Y=K_X+D+exceptional}, this also implies that
\[K_Z+\Delta = \psi^*(K_X+D)+E_2\]
for some $\psi$-exceptional divisor $E_2\ge 0$. By \cite{Has19}*{Lemma 2.15}, we see that $(Y,\Gamma)$ (resp. $(X,D)$) has a good minimal model over $B$ if and only if $(Z,\Delta)$ has a good minimal model over $B$, and all three pairs have the same log canonical model over $B$ by \cite{KM98}*{Corollary 3.53}.
\end{proof}

\subsection{MMP for log canonical pairs}

We need two fundamental results on the MMP for log canonical pairs. The first result is the existence of good minimal models for pairs of log Calabi-Yau type.

\begin{thm}[\cite{Bir12-lc-flip}*{Theorem 1.1}, \cite{HX13}*{Theorem 1.6}, \cite{Has19}*{Theorem 1.1}] \label{thm:birkar gmm}
Let $(X,D+A)$ be a log canonical pair over $B$ where $D,A\ge 0$ are $\R$-divisors. Assume that
\[K_X+D+A\sim_{\R, B} 0\]
and that $K_X+D$ is $\R$-Cartier. Then $(X,D)$ has a good minimal model or a Mori fiber space over $B$.
\end{thm}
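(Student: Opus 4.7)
The plan is to exploit the log Calabi--Yau relation $K_X + D + A \simover{B} 0$, which rewrites as $K_X + D \simover{B} -A$, so that any step of a $(K_X + D)$-MMP is automatically positive on $A$; the conclusion will then follow from the log canonical MMP with scaling together with a log abundance argument in the Calabi--Yau setting.

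First I would reduce to the $\Q$-factorial dlt case. Apply Theorem \ref{dlt modification} to the lc pair $(X, D + A)$ to obtain a dlt modification $\pi\colon (Y, D_Y + A_Y + E) \to (X, D + A)$, where $E$ is the sum of the $\pi$-exceptional prime divisors (each with coefficient $1$) and $K_Y + D_Y + A_Y + E = \pi^*(K_X + D + A) \simover{B} 0$. The subpair $(Y, D_Y + E)$ is then $\Q$-factorial dlt, and since $K_X + D$ is $\R$-Cartier, one finds $K_Y + (D_Y + E) = \pi^*(K_X + D) + A_{\mathrm{exc}}$ with $A_{\mathrm{exc}} = \pi^*A - A_Y \geq 0$ a $\pi$-exceptional divisor (nonnegativity follows because $A$ is $\R$-Cartier and effective). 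By Lemma \ref{lem:compare gmm existence on bir model}, producing a good minimal model or Mori fiber space over $B$ for $(X, D)$ is equivalent to doing the same for $(Y, D_Y + E)$, while the log Calabi--Yau relation on $Y$ persists. So I may replace $(X, D, A)$ by $(Y, D_Y + E, A_Y)$ and assume throughout that $(X, D + A)$ is $\Q$-factorial dlt.

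Next I would run a $(K_X + D)$-MMP with scaling of $A$ over $B$. This is well-posed because $K_X + D + A$ is trivially nef over $B$, so the initial scaling threshold can be taken to be $\lambda_0 = 1$. At a typical step the contracted extremal ray $R$ satisfies $(K_X + D) \cdot R < 0$, hence $A \cdot R > 0$ by the Calabi--Yau relation; equivalently, the step is a step of the $(K_X + D + \epsilon A)$-MMP on the log canonical pair $(X, D + \epsilon A)$ for sufficiently small $\epsilon > 0$. Existence of the requisite divisorial contractions and flips is then covered by the log canonical MMP results of \cite{Bir12-lc-flip} and \cite{HX13}. The main obstacle is termination: the scaling thresholds $\lambda_i$ form a non-increasing sequence in $[0, 1]$, and one must rule out an infinite sequence of flips with $\lambda_i$ bounded away from zero. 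This is the core technical content of the cited works, proved via special termination on the dlt locus combined with the log Calabi--Yau geometry, and using \cite{Has19} to close the residual cases.

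Finally, at termination one either meets a Mori fiber contraction (and we are done) or obtains a minimal model $(X', D')$ with $K_{X'} + D'$ nef over $B$ and $K_{X'} + D' + A' \simover{B} 0$. To upgrade nef to semiample over $B$ I would invoke descending log abundance: since $K_{X'} + D' + A'$ is trivially semiample, $A' \geq 0$, and $-A' \simover{B} K_{X'} + D'$ is nef, the standard Calabi--Yau abundance argument (as in \cite{HX13}, \cite{Has19}) yields that $K_{X'} + D'$ is semiample over $B$. This produces the good minimal model of $(X, D)$ over $B$.
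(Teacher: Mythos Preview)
The paper does not prove this theorem at all; it is stated in the preliminaries as a result imported directly from \cite{Bir12-lc-flip}, \cite{HX13}, and \cite{Has19}, with no argument supplied. Your write-up is therefore not being compared against a proof in the paper but against a citation. As a sketch of the strategy in those references it is broadly accurate, but it is circular as a proof: the two substantive steps---termination of the $(K_X+D)$-MMP with scaling in the log Calabi--Yau setting, and semiampleness on the resulting minimal model---are exactly the content of the cited theorems, and you resolve both by invoking those same papers. What you have written is a roadmap to the literature rather than an independent argument.

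One concrete gap in the reduction step: Lemma~\ref{lem:compare gmm existence on bir model} only transfers the existence of \emph{good minimal models} between $(X,D)$ and $(Y,D_Y+E)$; it says nothing about Mori fiber spaces. To make the reduction go through in the non-pseudo-effective case you also need that $K_X+D$ is pseudo-effective over $B$ if and only if $K_Y+D_Y+E$ is (which follows from $K_Y+D_Y+E=\pi^*(K_X+D)+A_{\mathrm{exc}}$ with $A_{\mathrm{exc}}$ effective and $\pi$-exceptional), together with a separate input guaranteeing a Mori fiber space for lc pairs whose log canonical divisor is not pseudo-effective. This is routine, but it is not what the lemma you cite actually says.
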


The second result is related to the MMP with scaling. In fact, by taking $X=Y$ in the statement below it implies that any log canonical pair $(X,D+A)$ over $B$ where $A$ is ample over $B$ has a good minimal model over $B$ as long as $K_X+D+A$ is pseudo-effective over $B$.

\begin{thm}[\cite{HH20}*{Theorem 5.9}] \label{thm:MMP scaling fiber has gmm}
Let $(X,D)$ be a log canonical pair over $B$ and let $\pi\colon X\to Y$ be a projective morphism of normal quasi-projective varieties over $B$. Let $A\ge 0$ be an $\R$-divisor on $Y$ that is ample over $B$ such that $(X,D+\pi^*A)$ is log canonical. Assume that $(X,D)$ has a good minimal model over $Y$, and $K_X+D+\pi^*A$ is pseudo-effective over $B$, then $(X,D+\pi^*A)$ has a good minimal model over $B$.
\end{thm}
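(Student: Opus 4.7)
The plan is in two stages: first reduce to the case where $K_X+D$ is semiample over $Y$, then construct a good minimal model over $B$ using the relative Iitaka fibration.

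\textbf{Stage 1 (reduction over $Y$).} Since $(X,D)$ has a good minimal model over $Y$, I first run a $(K_X+D)$-MMP over $Y$ terminating with such a good minimal model $(X',D')$. Every extremal ray $R$ contracted in this MMP is over $Y$, so $\pi^*A\cdot R=0$, and hence $(K_X+D+\pi^*A)\cdot R=(K_X+D)\cdot R$. Consequently this is simultaneously a $(K_X+D+\pi^*A)$-MMP over $Y$, and Lemma~\ref{lem:compare gmm existence on bir model} shows that the conclusion for $(X,D+\pi^*A)$ is equivalent to the conclusion for $(X',D'+(\pi')^*A)$, where $\pi'\colon X'\to Y$ is the induced morphism. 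Replacing the data, we may assume $K_X+D$ is semiample over $Y$ from the start.

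\textbf{Stage 2 (descent via the relative Iitaka fibration).} Let $g\colon X\to Z$ be the morphism over $Y$ given by the semiampleness of $K_X+D$ over $Y$, with structure map $q\colon Z\to Y$ satisfying $\pi=q\circ g$. Then $K_X+D\sim_{\R,Y} g^*H$ for some $q$-ample $\R$-divisor $H$ on $Z$, and after adjusting $H$ by the pullback of an $\R$-Cartier divisor from $Y$ we obtain $K_X+D\sim_\R g^*H'$ with $H'$ still $q$-ample. Hence
\[
K_X+D+\pi^*A\sim_\R g^*(H'+q^*A).
\]
I would then apply the canonical bundle formula to the relative log Calabi--Yau fibration $g$ to produce a generalized log canonical pair $(Z,B_Z,\mathbf M)$ with $K_Z+B_Z+M_Z\sim_\R H'$. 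Via a standard lifting of MMP through $g$, the problem reduces to showing that $(Z,B_Z+q^*A,\mathbf M)$ has a good minimal model over $B$.

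\textbf{Stage 3 (finishing on $Z$).} On $Z$, $K_Z+B_Z+M_Z$ is ample over $Y$ (being numerically equivalent to a $q$-ample divisor) and $A$ is ample over $B$ on $Y$, so a standard relative-ampleness argument combined with the pseudo-effectivity of $K_Z+B_Z+M_Z+q^*A$ over $B$ (inherited from the hypothesis on $X$) shows that $K_Z+B_Z+M_Z+q^*A$ is big over $B$. A generalized-pair analogue of Theorem~\ref{thm:MMP scaling fiber has gmm} then yields the desired good minimal model on $Z$; pulling back along $g$ gives a good minimal model of $(X,D+\pi^*A)$ over $B$. The main technical obstacle is making Stages 2--3 rigorous: the canonical bundle formula and the MMP-lifting through $g$ must be performed in the generalized-pair category with sufficient care to preserve the existence of good minimal models between $Z$ and $X$.
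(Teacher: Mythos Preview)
The paper does not prove this statement; it is quoted verbatim as a preliminary result with the citation \cite{HH20}*{Theorem 5.9} and no argument is given. So there is no in-paper proof to compare against, and your proposal must stand on its own.

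Your reduction in Stage~1 is fine (and is indeed how such arguments usually begin), but Stages~2--3 contain a genuine circularity. After passing to $Z$ via the canonical bundle formula, you arrive at a generalized lc pair $(Z,B_Z,\mathbf M)$ with $K_Z+B_Z+M_Z$ ample over $Y$ and you want a good minimal model of $(Z,B_Z+q^*A,\mathbf M)$ over $B$. You then invoke ``a generalized-pair analogue of Theorem~\ref{thm:MMP scaling fiber has gmm}'' to conclude. But that is exactly the statement you are trying to prove, now for generalized pairs and with $Z$ playing the role of $X$; when $K_X+D$ is big over $Y$ one has $\dim Z=\dim X$, so no inductive leverage has been gained. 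The actual content of \cite{HH20}*{Theorem 5.9} lies precisely in establishing the good minimal model over $B$ once you have one over $Y$; your argument relocates this difficulty to $Z$ without resolving it.

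There is a second gap you flag yourself: the ``standard lifting of MMP through $g$'' between $X$ and $Z$ is not standard in the lc/generalized-lc setting. One needs to know that a good minimal model for the base generalized pair over $B$ pulls back to a good minimal model for $(X,D+\pi^*A)$ over $B$, and conversely that the canonical bundle formula is compatible with the MMP steps. This requires nontrivial control on the moduli part and on log canonical centers, and is part of the work carried out in \cite{HH20}. Without either an honest induction on dimension or a direct MMP-with-scaling argument over $B$ (using the ampleness of $A$ to guarantee termination), the proposal does not close.
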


We also need the following statement, which is essentially a consequence of the negativity lemma (and the existence of MMP with scaling).

\begin{thm}[\cite{Bir12-lc-flip}*{Theorem 3.5}] \label{thm:birkar MMP contract exceptional}
Let $(X,D)$ be a $\bQ$-factorial dlt pair over $Z$ such that $X\to Z$ is birational. Assume that
\[K_X+D\simover{Z} M^+ - M^-\]
where $M^+, M^-\ge 0$ have no common components and $M^+$ is exceptional over $Z$. Then any $(K_X+D)$-MMP over $Z$ with scaling of a relatively ample divisor contracts $M^+$ after finitely many steps.
\end{thm}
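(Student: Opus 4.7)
The plan is to combine the negativity lemma with termination of the MMP with scaling in the relatively birational setting. Let $X = X_0 \dashrightarrow X_1 \dashrightarrow \cdots$ denote the MMP over $Z$ with scaling of a relatively ample divisor $H$, let $g_i\colon X_i \to Z$ be the induced birational morphism, and write $M^+_i, M^-_i$ for the strict transforms of $M^+, M^-$ on $X_i$. First I would check that the structural hypotheses propagate along the MMP: the supports of $M^+_i$ and $M^-_i$ remain disjoint, and $M^+_i$ remains $g_i$-exceptional. This is because flips are isomorphisms in codimension one, and any divisorial contraction in a relatively birational MMP must contract a $g_i$-exceptional divisor (otherwise $X_{i+1}\to Z$ would fail to be birational), so a step can only remove a component of $M^+_i$ or $M^-_i$ without creating new non-exceptional ones.

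The heart of the argument is to show that the MMP cannot reach a relatively nef model while $M^+_i$ is nonzero. Suppose at some step $K_{X_i}+D_i$ is nef over $Z$, and set $B_i := M^-_i - M^+_i$. Then $-B_i \simover{Z} K_{X_i}+D_i$ is nef over $Z$, while the pushforward $(g_i)_* B_i = (g_i)_* M^-_i$ is effective since $M^+_i$ is $g_i$-exceptional. The relative negativity lemma (cf.\ \cite{KM98}*{Lemma 3.39}) then forces $B_i \geq 0$, that is, $M^-_i \geq M^+_i$; combined with the disjointness of supports this can only happen if $M^+_i = 0$.

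To finish, I would invoke termination of the MMP with scaling of a relatively ample divisor for $\bQ$-factorial dlt pairs in the relatively birational setting; this is standard and follows, for instance, from special termination for flips together with the observation that divisorial contractions strictly decrease the Picard number and hence occur only finitely many times. Once the MMP terminates at some $X_n$ with $K_{X_n}+D_n$ nef over $Z$, the preceding paragraph yields $M^+_n = 0$, so every component of $M^+$ is contracted in finitely many steps. The main subtle point I expect is ensuring termination without circularity when the statement is used inside the inductive construction of lc flips; in the present excerpt, however, the theorem is cited as an established tool from \cite{Bir12-lc-flip}, so termination in this birational dlt setting may be taken for granted.
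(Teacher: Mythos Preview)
Your negativity-lemma argument showing that if $K_{X_i}+D_i$ becomes nef over $Z$ then $M^+_i=0$ is correct, and it is indeed one of the two ingredients behind the result. The gap is in your final paragraph: you claim that termination of the MMP with scaling for $\bQ$-factorial dlt pairs over a birational base is ``standard'' and ``follows, for instance, from special termination for flips.'' This is not so. Special termination only guarantees that after finitely many steps the flipping loci are disjoint from $\lfloor D\rfloor$; it does \emph{not} yield termination of the remaining sequence of flips, and termination of dlt flips---even in the relatively birational setting---is still open. The paper stresses exactly this point in the remark immediately following the statement: the theorem only asserts that $M^+$ is contracted after finitely many steps, while the MMP itself may well continue indefinitely afterwards.

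The paper does not give its own proof here, citing the result from \cite{Bir12-lc-flip}. Birkar's argument there avoids termination altogether. Roughly, a sharpened form of the negativity lemma (his Lemma~3.3) shows that whenever $M^+_i\neq 0$, some component of $M^+_i$ is covered by $(K_{X_i}+D_i)$-negative curves contracted over $Z$; one then argues directly with the descending nef thresholds $\lambda_i$ of the scaling to force these components to be contracted in finitely many steps. The key conceptual point you are missing is that one must establish $M^+_i=0$ for some finite $i$ \emph{without} first knowing that the MMP eventually stops.
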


We should remark that the theorem only implies that after finitely many steps of the $(K_X+D)$-MMP over $Z$ we get a model $X\dashrightarrow Y$ such that $M^+_Y=0$. However, $(Y,D_Y)$ is not necessarily a minimal model over $Z$ (this would require the termination of the $(K_X+D)$-MMP over $Z$, which is still open).

\subsection{Families of stable pairs} \label{sec:stable family}

In this subsection, we recall some definitions and results on families of stable pairs in \cite{Kol23}. In general, it is quite subtle to give the correct definition of families of stable pairs. Here we mainly focus on the case when the bases of the families are reduced. 

We start with the definition of relative Mumford divisors. We only consider the case when the morphism is flat.

\begin{defi}[\cite{Kol23}*{Definition 4.68}]\label{defi:relative Mumford divisors}
Let $f\colon X\to B$ be a flat morphism with reduced, connected and $S_2$ fibers of pure dimension $d$ over a reduced scheme $B$. A \emph{relative Mumford divisor} on $X$ is a divisor $D$ on $X$ such that
\begin{enumerate}
    \item[(1)] (Equidimensionality) Every irreducible component of $\Supp D$ dominates an irreducible component of $B$, and all nonempty fibers of $\Supp D\to B$ are of pure dimension $d-1$.

    \item[(2)] (Mumford condition) The morphism $f$ is smooth at the generic points of $X_b\cap\Supp D$ for every $b\in B$.

    \item[(3)] (Generic Cartier condition) The divisor $D$ is Cartier at the generic points of $X_b\cap\Supp D$ for every $b\in B$.
\end{enumerate}
\end{defi}

Note that when $B$ is normal, conditions (1) and (2) imply condition (3) by \cite{Kol23}*{Theorem 4.4}.

We will usually fix a marking when we deal with stable pairs. So next we recall the definition of families of marked pairs. Fix a coefficient vector $\va=(a_1,\ldots,a_n)$ such that $0\le a_i\le 1$ for every $1\le i\le n$.

\begin{defi}\label{defi:$n$-marked families}
A family of varieties marked with $n$ divisors, or an \emph{$n$-marked family} over a reduced scheme $B$ is $f\colon(X;D_1,\ldots,D_n)\to B$ such that $f$ is flat with demi-normal and connected fibers, and $D_i$ is an effective, relative Mumford divisor for every $1\le i\le n$.
\end{defi}

\begin{defi}[\cite{Kol23}*{Definition 8.4}]\label{defi:families of marked pairs}
A \emph{family of marked pairs with coefficient vector $\va=(a_1,\ldots,a_n)$} over a reduced scheme $B$ is a family of pairs $f\colon(X,\sum^{n}_{i=1}a_iD_i)\to B$ such that $f\colon(X;D_1,\ldots,D_n)\to B$ is an $n$-marked family.
\end{defi}

Next, we define locally stable families of pairs.

\begin{defi}[\cite{Kol23}*{Theorem-Definition 4.7}]\label{defi:locally stable families of pairs}
A projective family of marked pairs $f\colon(X,\Delta=\sum^{n}_{i=1}a_iD_i)\to B$ with coefficient vector $\va=(a_1,\ldots,a_n)$ over a reduced scheme $B$ is \emph{locally stable} if $K_{X/B}+\Delta$ is $\R$-Cartier, and $(X_b,\Delta_b)$ is slc for every $b\in B$. We also say that $f\colon(X,\Delta)\to B$ is a \emph{locally stable family of pairs with coefficient vector $\va=(a_1,\ldots,a_n)$}, or simply that $f$ is \emph{locally stable}.
\end{defi}

Note that our definition is slightly different from \cite{Kol23}*{Theorem-Definition 4.7}, since here we also have a marking (the definition in \emph{loc. cit.} should be thought of as the unmarked version). Since almost all the families of pairs we consider in this paper are families of marked pairs, we insist on the marking in all the definitions.

When the base is smooth, we have the following simple characterization of locally stable families.

\begin{thm}[\cite{Kol23}*{Theorem 4.54 and Corollary 4.55}] \label{thm:locally stable smooth base}
A projective family of pairs $f\colon(X,\Delta)\to B$ over a smooth scheme $B$ is locally stable if and only if for every simple normal crossing divisor $D$ on $B$, the pair $(X,\Delta+f^*D)$ is slc.
\end{thm}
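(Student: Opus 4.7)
The plan is to prove both directions by combining adjunction and inversion of adjunction in the slc setting, with the nontrivial content concentrated in the case when $D$ is smooth.

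For the easy direction ($\Leftarrow$), taking $D=0$ immediately gives that $(X,\Delta)$ is slc, so $K_X+\Delta$ is $\R$-Cartier; since $B$ is smooth, $K_B$ is Cartier and hence $K_{X/B}+\Delta=(K_X+\Delta)-f^*K_B$ is also $\R$-Cartier. To verify slc-ness of a fiber $(X_b,\Delta_b)$, I would work locally near $b$ with regular parameters $t_1,\ldots,t_n$ on $B$ and set $D_i=\{t_i=0\}$. Then $\sum_i D_i$ is SNC and $\bigcap_i f^*D_i=X_b$; iterated adjunction applied to the slc pair $(X,\Delta+\sum_i f^*D_i)$ yields that $(X_b,\Delta_b)$ is slc. (The Mumford condition on $\Delta$ ensures that the supports of $\Delta$ and the $f^*D_i$ are in sufficiently general position for the successive restrictions to be well-defined.)

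For the hard direction ($\Rightarrow$), I would proceed by induction on the number of irreducible components of $D$. The base case $D=0$ asserts that a locally stable family over a smooth base has slc total space, and is the main technical step. For the inductive step, it suffices to treat the case when $D$ is smooth and irreducible, as the remaining components of an SNC $D$ can be incorporated by induction on the dimension of $B$. The restriction $(f^{-1}(D),\Delta|_{f^{-1}(D)})\to D$ of a locally stable family to a smooth closed subvariety $D\subset B$ is again locally stable: flatness, the Mumford conditions, and $\R$-Cartierness of the relative canonical divisor are all preserved by base change along the closed immersion $D\hookrightarrow B$, and the fibers are unchanged. So by the base case applied to the smooth base $D$, the pair $(f^{-1}(D),\Delta|_{f^{-1}(D)})$ is slc. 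Since $f^*D=f^{-1}(D)$ is a reduced Cartier divisor on $X$, adjunction yields
\[(K_X+\Delta+f^*D)|_{f^{-1}(D)}=K_{f^{-1}(D)}+\Delta|_{f^{-1}(D)},\]
and Kollár's inversion of adjunction for slc pairs (see \cite{Kol13}) then shows that $(X,\Delta+f^*D)$ is slc in a neighborhood of $f^{-1}(D)$; slc-ness away from $f^{-1}(D)$ follows from the base case applied to $(X,\Delta)$.

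The main obstacle is the base case of the induction: proving slc-ness of the total space from the mere assumption of slc fibers and $\R$-Cartierness of $K_{X/B}+\Delta$. The natural strategy is to cut $B$ by general hyperplane sections through an arbitrary base point to reduce to $\dim B=1$, where the central fiber $X_b$ is an $\R$-Cartier divisor and inversion of adjunction for the pair $(X,\Delta+X_b)$ applies directly. The slc version of inversion of adjunction is genuinely more delicate than the lc one because the conductor on a fiber is in general only a Weil divisor and need not extend well to a neighborhood; the standard way around this is to pass to the normalization $\bar X\to X$, translate the slc hypothesis on fibers into an lc statement on the normalized family $(\bar X,\bar\Delta+\bar\Gamma)\to B$ including the conductor divisor, apply lc inversion of adjunction there (which is available by Kawakita), and descend the conclusion back to $X$ using Kollár's gluing criteria.
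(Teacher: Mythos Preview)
The paper does not give its own proof of this statement; it is quoted from \cite{Kol23} as a preliminary result. So there is no proof in the paper to compare against, and your task here was really to outline Koll\'ar's argument.

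Your strategy is the correct one and matches the shape of the proof in \cite{Kol23}: the nontrivial direction reduces, by the inductive argument on components of $D$ that you describe, to the base case that a locally stable family over a smooth base has slc total space; and this in turn is attacked by cutting down to $\dim B=1$ and invoking inversion of adjunction after normalization.

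There is, however, a genuine gap at the normalization step that you flag as ``delicate'' without resolving. You write that one should ``translate the slc hypothesis on fibers into an lc statement on the normalized family $(\bar X,\bar\Delta+\bar\Gamma)\to B$'' and then apply Kawakita. For this to make sense you need that $\bar X\to B$ is flat and that the fiber $\bar X_b$ is the normalization of $X_b$ (so that the conductor $\bar\Gamma$ restricts to the conductor of $X_b$ and the lc hypothesis on the fiber is the one you want). Neither is automatic: normalization does not commute with base change in general, and establishing this compatibility for demi-normal families is precisely the technical heart of the argument. The paper itself runs into the same issue later (Lemma~\ref{lem:normalization locally stable}, where flatness of the normalization is singled out as ``the main subtlety''), and that lemma \emph{uses} the present theorem, so you cannot cite it here. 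In Koll\'ar's treatment this is handled by a careful local analysis of the nodal locus in families; your sketch would need to incorporate that step to be complete.
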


Similarly, we can define stable families.

\begin{defi}\label{defi:stable families}
A family of marked pairs $f\colon(X,\Delta=\sum^{n}_{i=1}a_iD_i)\to B$ with coefficient vector $\va=(a_1,\ldots,a_n)$ over a reduced scheme $B$ is \emph{stable} if $f\colon(X,\Delta)\to B$ is locally stable, and $K_{X/B}+\Delta$ is ample over $B$. We also say that $f\colon(X,\Delta)\to B$ is a \emph{stable family of pairs with coefficient vector $\va=(a_1,\ldots,a_n)$}, or simply that $f$ is \emph{stable}.
\end{defi}

The foundational theorem in the moduli theory of higher-dimensional varieties is that stable pairs form a proper moduli.

\begin{thm}[\cite{Kol23}*{Theorems 4.1 and 4.8}] \label{thm:moduli of stable pairs}
Fix a vector $\va=(a_1,\ldots,a_n)$ such that $0< a_i\le 1$ for every $1\le i\le n$, a positive integer $d$, and a positive number $v$. Then there exists a proper Deligne--Mumford stack $\cK_{\va,d,v}$ which represents the moduli problem of stable families $f\colon(X,\sum^{n}_{i=1}a_iD_i)\to B$ with fibers of dimension $d$ and volume $v$ for reduced bases $B$. 
\end{thm}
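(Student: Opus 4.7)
The statement bundles together the main existence and compactness theorems of Kollár's moduli theory for stable pairs, so my plan is to outline the standard construction: produce a bounded parameter scheme, cut out the stable locus, and verify the Deligne--Mumford and properness conditions.

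First I would establish boundedness. The collection of stable pairs $(X,\sum a_iD_i)$ with fixed dimension $d$, coefficient vector $\va\in(0,1]^n$, and volume $v$ forms a bounded family. The normal (log canonical) case is the boundedness theorem for log canonical pairs of log general type with DCC coefficients, due to Hacon--McKernan--Xu; the semi-log-canonical case is reduced to this one by passing to the normalization and controlling the conductor divisor. Boundedness produces an integer $N=N(\va,d,v)$ such that $N(K_X+\sum a_iD_i)$ is Cartier and very ample with a fixed Hilbert polynomial $P$ for every such $(X,D)$, and similarly the components $D_i$ have uniformly bounded Hilbert polynomials $P_i$.

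Next I would build a parameter scheme. Fixing the embedding by $|N(K_X+\sum a_iD_i)|$ realizes every stable pair as a closed subscheme of $\bP^{h^0-1}$ together with $n$ marked divisors, giving a point of the product $H=\mathrm{Hilb}_{\bP^{h^0-1}}^P\times\prod_i\mathrm{Hilb}_{\bP^{h^0-1}}^{P_i}$. Inside $H$ one cuts out the locally closed subscheme $H_0$ parameterizing flat families of demi-normal varieties whose fibers are slc, for which each $D_i$ is a relative Mumford divisor in the sense of Definition \ref{defi:relative Mumford divisors}, and for which $K_{X/H_0}+\sum a_iD_i$ is $\R$-Cartier and ample with the prescribed numerical invariants. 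The crucial technical point, at the heart of Kollár's theory, is to arrange that the moduli functor commutes with base change in the non-$\bQ$-Gorenstein and non-normal setting; this is the role of the K-condition. Over reduced bases (which is what the theorem asks for) this simplifies to requiring that the fibers be slc and that each $D_i$ be a relative Mumford divisor. The moduli stack is then the quotient $[H_0/\mathrm{PGL}_{h^0}]$.

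The Deligne--Mumford property follows because for a stable pair $(X,D)$ the automorphism group $\Aut(X,D)$ is a finite group scheme: automorphisms act faithfully on the finite-dimensional space of global sections of $mN(K_X+D)$ for $m\gg0$, and the ampleness of $K_X+D$ together with slc singularities rules out positive-dimensional automorphism subgroups. Separatedness follows from the uniqueness of semi-log-canonical models: any two extensions of a stable family across the closed point of a DVR are both ample models of $K_{X/B}+D$, hence isomorphic.

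The main obstacle is properness, which amounts to a valuative criterion: given a stable family over the punctured spectrum of a DVR one must produce, after possibly a finite base change, a stable extension to the closed point. This is the stable reduction theorem for stable pairs, which uses the full force of the MMP (semistable reduction, existence of log canonical models in the log general type case, and gluing theory for slc pairs to patch the normalized limit back into a semi-log-canonical one). This step is where essentially all the heavy machinery of the minimal model program enters and is by far the hardest part; it was carried out in full generality in \cite{Kol23}. Assembling these pieces yields the desired proper Deligne--Mumford stack $\cK_{\va,d,v}$.
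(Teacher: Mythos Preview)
The paper does not prove this theorem at all; it is quoted as a foundational input from \cite{Kol23}*{Theorems 4.1 and 4.8} and used as a black box throughout. So there is no ``paper's own proof'' to compare against. Your outline is a reasonable high-level sketch of the construction in Koll\'ar's book (boundedness via Hacon--McKernan--Xu, embedding into a Hilbert scheme, quotient by $\mathrm{PGL}$, finiteness of automorphisms for the DM property, separatedness from uniqueness of slc models, and properness from stable reduction), but in the context of this paper the correct ``proof'' is simply the citation.
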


For ease of notation we will omit $d$ in $\cK_{\va,d,v}$ as the dimension is often irrelevant in our discussion. We define $\cK_\va:=\cup_v\cK_{\va,v}$. By convention, we extend the definition of $\cK_\va$ to coefficient vectors $\va$ where some of the $a_i$ are allowed to be zero: in this case, we simply set $\cK_\va$ to be $\cK_{\va'}$ where $\va'$ is obtained by removing the zero entries in $\va$.

A direct consequence of the representability of the moduli problem is the following descent property.

\begin{lemma} \label{lem:descend stable family}
Let $\varphi\colon B'\to B$ be a proper morphism between normal varieties with connected fibers, and let $(X',D')\to B'$ be a stable family. Assume that for every $b\in B$, the induced family $(X',D')\times_{B'} \varphi^{-1}(b)$ is a trivial family. Then there exists a stable family $(X,D)\to B$ such that $(X',D')\cong (X,D)\times_B B'$.
\end{lemma}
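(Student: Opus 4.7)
The plan is to leverage the representability of the moduli stack of stable pairs (Theorem \ref{thm:moduli of stable pairs}) together with a descent argument along $\varphi$. First, I would use that the stable family $(X',D')\to B'$ is classified by a morphism $\Phi'\colon B'\to \cK_\va$. The hypothesis that $(X',D')\times_{B'}\varphi^{-1}(b)$ is a trivial family for every $b\in B$ says precisely that $\Phi'$ is (geometrically) constant on every fiber of $\varphi$, so it factors set-theoretically through $\varphi$. The goal is therefore to promote this set-theoretic factorization to a scheme-theoretic one, producing a morphism $\Phi\colon B\to\cK_\va$ whose pullback of the universal family is the desired $(X,D)\to B$.

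Next, I would verify that $\varphi$ behaves like a universal categorical quotient in the sense needed for descent. Since $\varphi$ is proper with connected fibers, its Stein factorization produces a finite, universally bijective morphism $B''\to B$; because $B$ is normal, Zariski's Main Theorem forces $B''\to B$ to be an isomorphism, so $\varphi_*\cO_{B'}=\cO_B$. Consequently, morphisms from $B$ to any algebraic space are in bijection with morphisms from $B'$ that are constant on the fibers of $\varphi$, and the analogous statement for morphisms to the separated DM stack $\cK_\va$ is what we need to verify.

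The main technical step, and the main obstacle, is to construct the descent datum for the family itself: an isomorphism
\[
\sigma\colon p_1^*(X',D')\xrightarrow{\,\sim\,} p_2^*(X',D')
\]
on $B'\times_B B'$ satisfying the cocycle condition on the triple fiber product. The triviality hypothesis yields, over each geometric point of $B$, an identification of both pullbacks with the constant family $(X_b,D_b)\times(\varphi^{-1}(b)\times\varphi^{-1}(b))$, and hence a pointwise isomorphism $\sigma_b$. To globalize, I would consider $I=\mathrm{Isom}_{B'\times_B B'}\bigl(p_1^*(X',D'),\,p_2^*(X',D')\bigr)$. Because $\cK_\va$ is a separated Deligne–Mumford stack, $I$ is represented by a separated, unramified algebraic space of finite type over $B'\times_B B'$ that is surjective onto $B'\times_B B'$ by the pointwise triviality. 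Reducing to the case of trivial automorphism groups (by passing to an étale cover if needed), the map $I\to B'\times_B B'$ becomes an étale surjection of degree one, giving a canonical section; the cocycle condition can then be verified fiberwise, again using the triviality hypothesis over each point of $B$. The delicate point is keeping track of the automorphisms of the fibers so that this canonical section exists — this is where separatedness of $\cK_\va$ (ensuring $I$ is separated) plays the essential role.

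Once the descent datum $\sigma$ is produced and shown to be a cocycle, standard fppf-descent (applicable because $\varphi$ satisfies $\varphi_*\cO_{B'}=\cO_B$ and the pullback diagrams above are effective for separated DM stacks) yields the morphism $\Phi\colon B\to\cK_\va$ with $\Phi\circ\varphi=\Phi'$. Pulling back the universal stable family along $\Phi$ produces the stable family $(X,D)\to B$, and by construction its pullback along $\varphi$ agrees (via the universal property) with $(X',D')$, i.e., $(X',D')\cong (X,D)\times_B B'$. Uniqueness of $(X,D)$ is immediate from the uniqueness of $\Phi$.
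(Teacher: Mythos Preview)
Your overall strategy --- factor the classifying morphism $\Phi'\colon B'\to\cK_\va$ through $\varphi$ --- is sound and is indeed the first thing the paper says: ``This follows from Theorem~\ref{thm:moduli of stable pairs}.'' However, the paper then opts for a direct, more elementary route: it descends the polarized flat family $X'\to B'$ via a Hilbert scheme argument (the Hilbert scheme is a genuine scheme, so descent along a proper morphism with $\varphi_*\cO_{B'}=\cO_B$ is immediate), defines $D$ as the pushforward of $D'$, and then checks by hand that $(X,D)\to B$ satisfies the relative Mumford, slc, $\R$-Cartier, and ampleness conditions, using that these can all be tested fiberwise over a normal base. This sidesteps any stacky descent entirely.

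Your execution, by contrast, has two genuine gaps. First, invoking ``standard fppf descent'' is incorrect: $\varphi$ need not be flat (e.g.\ it could be a resolution of singularities), so fppf descent is simply unavailable. What you actually need is descent along a proper surjection with $\varphi_*\cO_{B'}=\cO_B$, which is a different (and more delicate) statement for morphisms to stacks. Second, and more seriously, the step ``reducing to the case of trivial automorphism groups by passing to an \'etale cover if needed'' is unjustified: an \'etale cover of $B$ destroys the hypothesis that $\varphi$ has connected fibers, and an \'etale cover of $\cK_\va$ need not lift $\Phi'$ globally. Without this reduction, your section of $I\to B'\times_B B'$ is not yet constructed.

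A correct version of your argument would instead use the identity section of $I$ over the diagonal $B'\hookrightarrow B'\times_B B'$ together with the connectedness of the fibers $\varphi^{-1}(b)\times\varphi^{-1}(b)$: since $I\to B'\times_B B'$ is finite \'etale and each such fiber is connected, the identity section picks out a unique sheet of $I$ over every fiber, and one checks (e.g.\ via Stein factorization of $I\to B$) that these glue to a global section satisfying the cocycle condition. This is not difficult, but it is the actual content of the descent step and cannot be replaced by a hand-wave about \'etale covers.
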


Here we equip $\varphi^{-1}(b)$ with the reduced scheme structure. We say that a stable family $(X,D)\to B$ is trivial if $(X,D)\cong (X_b,D_b)\times B$ over $B$ for some (or any) $b\in B$.

\begin{proof}
This follows from Theorem \ref{thm:moduli of stable pairs} but can also be checked directly. By assumption and standard Hilbert scheme argument, the flat morphism $X'\to B'$ descends to $X\to B$. Since $D'$ has no variations along the fibers of $\varphi$, we see that each component of $D'$ maps onto a divisor in $X$. Let $D$ be the natural pushforward of $D'$. Since the base $B$ is normal, the relative Mumford divisor condition can be checked on the fibers; since it is true for $D'$, the same holds for $D$. Similarly, we know that $(X_b,D_b)$ is slc for any $b\in B$. By construction, we have $(X',D')\cong (X,D)\times_B B'$, and the relatively ample divisor $K_{X'/B'}+D'$ descends to $X$. Its descent is necessarily $K_{X/B}+D$, hence $K_{X/B}+D$ is $\R$-Cartier and ample over $B$. These imply that the family $(X,D)\to B$ is stable.
\end{proof}

\section{Existence of log canonical models}\label{sec:MMP results}

In this section, we prove several results on the existence of good minimal models and log canonical models for log canonical pairs. They form the first main ingredient in the construction of the reduction morphisms in Theorem \ref{main thm:weight reduction}.

\subsection{Existence of good minimal models}

First we show that abundance is preserved when reducing coefficients away from zero.

\begin{thm} \label{thm:gmm smaller coef}
Let $(X,D)$ be a log canonical pair over $B$ and let $0\le \Delta\le D$ be an $\R$-divisor. Assume that $(X,D)$ has a good minimal model over $B$, $K_X+\Delta$ is $\R$-Cartier, and $\Supp(\Delta)=\Supp(D)$. Then $(X,\Delta)$ has a good minimal model or a Mori fiber space over $B$.
\end{thm}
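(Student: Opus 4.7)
The plan is to reduce to Birkar's Theorem \ref{thm:birkar gmm} (good minimal models for log Calabi--Yau pairs) applied over the ample model of a good minimal model of $(X,D)$, and then to lift the resulting conclusion from there to over $B$ using Theorem \ref{thm:MMP scaling fiber has gmm}.

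First, I would reduce to the case where $X$ is $\Q$-factorial and both $(X,D)$, $(X,\Delta)$ are dlt. Take a dlt modification $f\colon X'\to X$ of $(X,D)$ by Theorem \ref{dlt modification}, so that $K_{X'}+D_{X'}+E=f^*(K_X+D)$ with $E=\sum E_j$ the sum of all $f$-exceptional divisors. Setting $\Theta:=\Delta_{X'}+E$ gives a $\Q$-factorial dlt pair $(X',\Theta)$ with $\Theta\le D_{X'}+E$, $\Supp\Theta=\Supp(D_{X'}+E)$, and $K_{X'}+\Theta=f^*(K_X+\Delta)+\sum a(E_j,X,\Delta)E_j$, the last summand being effective since $(X,\Delta)$ is lc. Lemma \ref{lem:compare gmm existence on bir model} therefore reduces the problem to $(X',D_{X'}+E)$ and $(X',\Theta)$.

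Next, let $(Y,D_Y)$ be a good minimal model of $(X,D)$ with $\sigma\colon Y\to Z$ the ample model of $K_Y+D_Y$ over $B$. I would then pass to a common $\Q$-factorial dlt model $W$ dominating $X$ and $Y$, with $p\colon W\to X$, $q\colon W\to Y$, and $\rho:=\sigma\circ q\colon W\to Z$. Transporting the problem to $W$ via Lemma \ref{lem:compare gmm existence on bir model} with boundary $D_W^+$ chosen to incorporate all prime divisors either $p$-exceptional or contracted by $X\dashrightarrow Y$, one obtains $(W,D_W^+)$ and $(W,\Theta_W)$ satisfying $K_W+D_W^+\sim_{\R,Z} F$ for some effective $\rho$-exceptional divisor $F$ contained in $\Supp D_W^+$. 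Applying Theorem \ref{thm:birkar MMP contract exceptional}, an MMP over $Z$ with scaling contracts $F$ in finitely many steps, and at each step Lemma \ref{lem:compare gmm existence on bir model} keeps the $\Theta_W$-problem compatible; after this MMP, $K_W+D_W^+\sim_{\R,Z}0$.

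At this point Theorem \ref{thm:birkar gmm} applies over $Z$ to $(W,D_W^+)=(W,\Theta_W+(D_W^+-\Theta_W))$---the pair is lc, $K_W+\Theta_W$ is $\R$-Cartier by $\Q$-factoriality, and $K_W+D_W^+\sim_{\R,Z}0$---yielding a good minimal model or a Mori fiber space for $(W,\Theta_W)$ over $Z$. To lift from over $Z$ to over $B$, in the gmm case $(V,\Theta_V)\to Z$ with $\tau\colon V\to Z$, I would choose a small ample $\R$-divisor $A$ on $Z$ over $B$ so that $(V,\Theta_V+\tau^*A)$ is lc; Theorem \ref{thm:MMP scaling fiber has gmm} then provides a good minimal model of $(V,\Theta_V+\tau^*A)$ over $B$, and a $(K_V+\Theta_V)$-MMP over $B$ with scaling of $\tau^*A$ terminates in a gmm of $(V,\Theta_V)$ over $B$, hence of $(X,\Delta)$. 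In the MFS case over $Z$, non-pseudoeffectivity of $K_X+\Delta$ over $Z$ descends to over $B$ and an analogous scaling MMP yields a Mori fiber space over $B$. The hardest parts will be the careful choice of the common dlt model $W$ and its boundary $D_W^+$ ensuring $F\subseteq\Supp D_W^+$, and the termination of the scaling MMP extracting the gmm of $(V,\Theta_V)$ from the perturbed pair in the final step.
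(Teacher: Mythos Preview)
Your overall strategy---reduce to a $\Q$-factorial dlt model over the ample model $Z$ of $(X,D)$, contract the exceptional part, apply Theorem \ref{thm:birkar gmm} over $Z$, then lift to $B$ via Theorem \ref{thm:MMP scaling fiber has gmm}---matches the paper's, but two steps contain genuine gaps. First, you run the $(K_W+D_W^+)$-MMP over $Z$ to contract $F$ and invoke Lemma \ref{lem:compare gmm existence on bir model} to keep the $\Theta_W$-problem compatible. This fails: a $(K_W+D_W^+)$-negative step can be $(K_W+\Theta_W)$-positive, so the hypothesis of Lemma \ref{lem:compare gmm existence on bir model} is not satisfied for $\Theta_W$ (and the resulting pair may not even be lc). The paper instead runs the $(K_X+\Delta)$-MMP over $Z$: writing $K_X+\Delta\sim_{\R,Z}M^+-M^-$ with $M^+$ exceptional, Theorem \ref{thm:birkar MMP contract exceptional} contracts $M^+$, and compatibility with the $\Delta$-problem is then automatic.

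Second, and more seriously, your lifting from $Z$ to $B$ is circular. Having a good minimal model of $(V,\Theta_V)$ over $Z$, Theorem \ref{thm:MMP scaling fiber has gmm} only gives a good minimal model of $(V,\Theta_V+\tau^*A)$ over $B$; your assertion that a $(K_V+\Theta_V)$-MMP over $B$ with scaling of the merely semiample $\tau^*A$ then terminates in a good minimal model of $(V,\Theta_V)$ is exactly what is not known---you are assuming termination/abundance for $(V,\Theta_V)$ over $B$. (Your MFS case has a parallel flaw: a Mori fiber space over $Z$ does not force one over $B$, since $K_X+\Delta$ can be pseudo-effective over $B$ without being so over $Z$.) The paper resolves this with a convex-combination trick, which is the only place the hypothesis $\Supp(\Delta)=\Supp(D)$ is genuinely used: after the MMP one lands on a model $Y$ with $K_Y+D'$ semiample over $B$ and $\Supp(\Delta_Y)=\Supp(D')$, so one can write $\Delta_Y=t\Gamma+(1-t)D'$ with $0\le\Gamma\le\Delta_Y$ and $t\in(0,1)$, giving $K_Y+\Delta_Y\sim_{\R,B}t(K_Y+\Gamma+\varphi^*H)$ for $H$ ample over $B$. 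Now Theorem \ref{thm:birkar gmm} produces a good minimal model of $(Y,\Gamma)$ over $Z$, and Theorem \ref{thm:MMP scaling fiber has gmm} lifts this to a good minimal model of $(Y,\Gamma+\varphi^*H)$---hence of $(Y,\Delta_Y)$, since the two log divisors are proportional---over $B$. The trick absorbs the ample perturbation into the boundary so no scaling-to-zero is needed.
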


\begin{rem}
Here the condition that $\Supp(\Delta)=\Supp(D)$ is crucial. Without this assumption, the statement becomes equivalent to the full MMP conjecture (including abundance) that any log canonical pair has either a good minimal model or a Mori fiber space. This is because for any log canonical pair $(X,\Delta)$ we can always choose some sufficiently ample divisor $H\ge 0$ such that $(X,D:=\Delta+H)$ is a stable pair. In particular, it satisfies all the other conditions of the above theorem.
\end{rem}

Before we prove this theorem, we first consider a special case.

\begin{lemma} \label{lem:gmm smaller coef, K_X+D nef}
In the setting of Theorem \ref{thm:gmm smaller coef}, assume that $K_X+D$ is semiample over $B$. Then $(X,\Delta)$ has a good minimal model or a Mori fiber space over $B$.
\end{lemma}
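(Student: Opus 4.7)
The plan is to reduce the problem to Theorem~\ref{thm:birkar gmm} by passing to the ample model of $K_X+D$ over $B$, run the MMP for $(X,\Delta)$ relative to that ample model, and then transport the resulting model back to one over $B$ itself. First, since $K_X+D$ is semiample over $B$, let $f\colon X\to Z$ denote its ample model over $B$, so that $K_X+D\sim_{\R, B} f^*H$ for some $\R$-Cartier divisor $H$ on $Z$ ample over $B$; in particular $K_X+D\sim_{\R, Z} 0$. Writing $D=\Delta+(D-\Delta)$, the log canonical pair $(X,D)$ together with the $\R$-Cartier divisor $K_X+\Delta$ satisfies $K_X+\Delta+(D-\Delta)=K_X+D\sim_{\R, Z} 0$, so Theorem~\ref{thm:birkar gmm} applied over $Z$ yields that $(X,\Delta)$ has either a good minimal model or a Mori fiber space over $Z$. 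A Mori fiber space over $Z$ automatically gives one over $B$ via $Z\to B$, so we focus on the good minimal model case.

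Let $X\dashrightarrow Y$ be the good minimal model of $(X,\Delta)$ over $Z$, with $K_Y+\Delta_Y$ semiample over $Z$. Every extremal ray contracted or flipped in this MMP lies in a fiber of $f$ and is therefore $(K_X+D)$-trivial, so $K_X+D$ descends through the MMP: writing $g\colon Y\to Z$ for the induced morphism, we have $K_Y+D_Y=g^*H$, which remains semiample over $B$. Let $h\colon Y\to W$ denote the ample model of $K_Y+\Delta_Y$ over $Z$, chosen so that $K_Y+\Delta_Y=h^*L$ with $L$ ample on $W$ over $Z$, and let $p\colon W\to Z$ be the structure map, so that $g=p\circ h$.

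Finally, I would verify that $(Y,\Delta_Y)$ is a good minimal model of $(X,\Delta)$ over $B$. For every $\lambda>0$ one has
\[
K_Y+\Delta_Y+\lambda(K_Y+D_Y)=h^*(L+\lambda p^*H),
\]
and since $L$ is ample on $W$ over $Z$ and $p^*H$ is the pullback of an ample-over-$B$ divisor on $Z$, the sum $L+\lambda p^*H$ is ample on $W$ over $B$ for every $\lambda>0$. Hence the left-hand side is semiample on $Y$ over $B$ for every $\lambda>0$, and taking $\lambda\to 0^+$ already forces $K_Y+\Delta_Y$ to be nef over $B$. To upgrade nefness to semiampleness, I would apply Theorem~\ref{thm:MMP scaling fiber has gmm} to $(Y,\Delta_Y)$ with the morphism $h$ and a small ample divisor $A$ on $W$ over $B$, obtaining a good minimal model of $(Y,\Delta_Y+h^*A)$ over $B$, and then descend to $(Y,\Delta_Y)$ by a standard finiteness-of-ample-models argument as $A\to 0$.

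The main obstacle is this last step. While nefness of $K_Y+\Delta_Y$ over $B$ drops out easily from the interpolation, upgrading to semiampleness requires controlling how the ample models of the perturbations $(Y,\Delta_Y+h^*A)$ behave as $A\to 0$. A related subtlety is that $K_Y+\Delta_Y$ may a priori fail to be pseudo-effective over $B$ even though it is pseudo-effective over $Z$; in that degenerate situation one must instead run a further MMP over $B$, scaled by the semiample divisor $K_Y+D_Y$, to produce a Mori fiber space.
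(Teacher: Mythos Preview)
Your proposal has a genuine gap, and it stems from never using the hypothesis $\Supp(\Delta)=\Supp(D)$. As the remark immediately after Theorem~\ref{thm:gmm smaller coef} emphasizes, without this condition the statement is equivalent to the full MMP/abundance conjecture, so any argument that ignores it cannot succeed.

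Concretely, two steps fail. First, the claim that $L+\lambda p^*H$ is ample on $W$ over $B$ for \emph{every} $\lambda>0$ is false: $L$ is only ample over $Z$, not over $B$, so one only gets ampleness over $B$ for $\lambda\gg 0$ (take e.g.\ $W=\bP^1\times\bP^1\to Z=\bP^1\to B=\mathrm{pt}$ with $L=\cO(-1,1)$). Hence your interpolation does not force $K_Y+\Delta_Y$ to be nef over $B$; indeed you yourself note later that $K_Y+\Delta_Y$ might fail to be pseudo-effective over $B$, which already contradicts the nefness claim. Second, the ``let $A\to 0$'' step at the end is exactly the missing abundance input and cannot be completed by a finiteness-of-models argument without further hypotheses.

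The paper's fix is short and uses the support condition in an essential way: because $\Supp(\Delta)=\Supp(D)$, one can \emph{overshoot} and write $\Delta=t\Gamma+(1-t)D$ with $0\le\Gamma\le\Delta$ and $t\in(0,1)$. Then
\[
K_X+\Delta\simover{B} t\bigl(K_X+\Gamma+\pi^*H\bigr),
\]
where $H\simover{B}\tfrac{1-t}{t}A$ is genuinely ample over $B$. Now Theorem~\ref{thm:birkar gmm} (applied to $(X,\Gamma)$ over $Z$, using $K_X+D\simover{Z}0$) gives a good minimal model of $(X,\Gamma)$ over $Z$, and Theorem~\ref{thm:MMP scaling fiber has gmm} applies directly with the ample divisor $H$ on $Z$ to produce a good minimal model of $(X,\Gamma+\pi^*H)$, hence of $(X,\Delta)$, over $B$. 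The point is that the support condition buys you a \emph{strictly positive} ample contribution from $Z$ in the $\R$-linear class of $K_X+\Delta$ itself, rather than having to manufacture one by perturbation and then remove it.
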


\begin{proof}
If $K_X+\Delta$ is not pseudo-effective over $B$, then it has a Mori fiber space over $B$ by \cite{HH20}*{Theorem 1.7}. Thus we may assume that $K_X+\Delta$ is pseudo-effective over $B$. We need to show that it has a good minimal model over $B$.

Let $\pi\colon X\to Z$ be the ample model of $K_X+D$ over $B$. Then $K_X+D\sim_\R \pi^*A$ for some $\R$-divisor $A$ that is ample over $B$. Since $0\le \Delta\le D$ and $\Supp(\Delta)=\Supp(D)$ by assumption, there exists some $\R$-divisor $0\le \Gamma\le \Delta$ on $X$ such that $\Delta$ is a nontrivial convex combination of $\Gamma$ and $D$, i.e. 
\[\Delta=t\Gamma+(1-t)D\]
for some $t\in (0,1)$. We then have
\[K_X+\Delta = t(K_X+\Gamma)+(1-t)(K_X+D) \simover{B} t(K_X+\Gamma+\pi^*H)\]
where $0\le H\simover{B}\frac{1-t}{t}A$ such that $(X,\Gamma+\pi^*H)$ is log canonical. Therefore, it suffices to show that $(X,\Gamma+\pi^*H)$ has a good minimal model over $B$. As $K_X+\Gamma+(D-\Gamma)\simover{Z} 0$, by Theorem \ref{thm:birkar gmm} we see that $(X,\Gamma)$ has a good minimal model over $Z$. Since $H$ is ample over $B$, the statement we want follows from Theorem \ref{thm:MMP scaling fiber has gmm}.
\end{proof}

We now consider the general case of Theorem \ref{thm:gmm smaller coef}.

\begin{proof}[Proof of Theorem \ref{thm:gmm smaller coef}]
As in the proof of Lemma \ref{lem:gmm smaller coef, K_X+D nef}, we may assume that $K_X+\Delta$ is pseudo-effective over $B$ (otherwise it has a Mori fiber space over $B$). We first reduce to the case when the birational contraction to some weak log canonical model of $(X,D)$ over $B$ is a morphism. By assumption, $(X,D)$ has a good minimal model $(Z,D_Z)$ over $B$. Denote by $\pi\colon X\dashrightarrow Z$ the induced birational contraction over $B$. Choose a common log resolution
\[
\xymatrix{
& W \ar[ld]_\varphi \ar[rd]^\psi & \\
X \ar@{-->}^{\pi}[rr] & & Z\\
}
\]
and let $F$ be the sum of the $\varphi$-exceptional divisors. Since $(X,D)$ is log canonical, we have $K_W+D_W+F - \varphi^*(K_X+D)$ is effective and $\varphi$-exceptional. In particular, we see that $(Z,D_Z)$ is a weak log canonical model of $(W,D_W+F)$ over $B$. By Lemma \ref{lem:compare gmm existence on bir model}, we also know that $(X,\Delta)$ has a good minimal model over $B$ if and only if $(W,\Delta_W+F)$ has a good minimal model over $B$. Thus in order to prove the theorem we may replace $(X,D)$ by $(W,D_W+F)$ and $\Delta$ by $\Delta_W+F$. In particular, we may assume that the induced birational map $\pi\colon X\dashrightarrow Z$ to the weak log canonical model $(Z,D_Z)$ over $B$ is a morphism. Moreover, we may assume that $(X,D)$ is $\Q$-factorial dlt.

We write
\[K_X+D = \pi^*(K_Z+D_Z) + E \simover{Z} E\]
where $E$ is $\pi$-exceptional. As $(Z,D_Z)$ is a weak log canonical model of $(X,D)$ over $B$, we also know that $E\ge 0$. Recall that $0\le\Delta\le D$, thus from
\[K_X+\Delta = K_X+D - (D-\Delta) \simover{Z} E - (D-\Delta)\]
we get
\[K_X+\Delta \simover{Z} M^+ - M^-\]
where $M^+, M^-\ge 0$ have no common components, $M^-\le D-\Delta$ and $M^+\le E$. In particular, $M^+$ is exceptional over $Z$. Since $(X,D)$ is $\Q$-factorial dlt, $(X,\Delta)$ is $\Q$-factorial dlt as well. By Theorem \ref{thm:birkar MMP contract exceptional}, the $(K_X+\Delta)$-MMP over $Z$ with scaling of some ample divisor contracts $M^+$ after finitely many steps. Let $X\dashrightarrow Y$ be the corresponding birational contraction over $Z$, let $\varphi\colon Y\to Z$ be the induced morphism and let $\Delta'=M^-_Y\ge 0$. Then $M^+_Y=0$ and by pushing forward the previous identities to $Y$ we obtain
\[K_Y+\Delta_Y+\Delta' = \varphi^*(K_Z+D_Z).\]
This implies that $(Z,D_Z)$ is also a weak log canonical model of $(Y,D':=\Delta_Y+\Delta')$ over $B$, and moreover, $K_Y+D'$ is semiample over $B$. From $\Delta_Y\le D'\le D_Y$ and $\Supp(\Delta)=\Supp(D)$ we also see that $\Supp(\Delta_Y)=\Supp(D')$, hence Lemma \ref{lem:gmm smaller coef, K_X+D nef} applies to $(Y,D')$ and $\Delta_Y$, and we deduce that $(Y,\Delta_Y)$ has a good minimal model over $B$. Since $X\dashrightarrow Y$ is part of a $(K_X+\Delta)$-MMP over $Z$, it follows from Lemma \ref{lem:compare gmm existence on bir model} (and the remark thereafter) that $(X,\Delta)$ has a good minimal model over $B$ as well. The proof is now complete.
\end{proof}

\subsection{Log canonical models}

When we decrease coefficients from a locally stable family of pairs, we often end up with pairs $(X,\Delta)$ such that $K_X+\Delta$ is no longer $\R$-Cartier. Nonetheless, they are of log canonical type (sometimes also called potentially log canonical), i.e. there exists some $\R$-divisor $D\ge \Delta$ on $X$ such that $(X,D)$ is log canonical. In this subsection, we define and study the log canonical models of such pairs. A key input is the following construction.

\begin{lemma}[\cite{Kol18-log-plurigenera}*{Proposition 19}] \label{lem:small Q-Cartier modification}
Let $(X,D)$ be a log canonical pair and let $\Gamma$ be an effective $\R$-divisor supported on $D$. Then there exists a small birational projective morphism $\pi\colon Y\to X$ such that $-\Gamma_Y$ is $\R$-Cartier and $\pi$-ample.
\end{lemma}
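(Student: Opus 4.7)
The plan is to realize $Y$ as the ample model over $X$ of an $\epsilon$-perturbation of a dlt modification of $(X,D)$, and then to verify that the resulting birational morphism $\pi\colon Y\to X$ is small.

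First, by Theorem \ref{dlt modification} take a dlt modification $f\colon W\to X$ of $(X,D)$, so that $W$ is $\bQ$-factorial, $(W,D_W+E)$ is dlt with $E$ the sum of $f$-exceptional divisors, and $K_W+D_W+E=f^*(K_X+D)$. Since $W$ is $\bQ$-factorial the strict transform $\Gamma_W$ is $\R$-Cartier, and since $\Gamma$ is supported on $D$, $\Gamma_W$ is supported on $D_W$. For $0<\epsilon\ll 1$ the perturbed pair $(W,\Delta_\epsilon:=D_W+E-\epsilon\Gamma_W)$ is still dlt with $\Supp(\Delta_\epsilon)=\Supp(D_W+E)$, and
\[
K_W+\Delta_\epsilon = f^*(K_X+D)-\epsilon\Gamma_W \simover{X} -\epsilon\Gamma_W.
\]

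Because $(W,D_W+E)$ admits itself as a trivial good minimal model over $X$, Theorem \ref{thm:gmm smaller coef} produces either a good minimal model or a Mori fiber space for $(W,\Delta_\epsilon)$ over $X$. The Mori fiber space alternative is excluded because $W\to X$ is birational: any fiber-type extremal contraction over $X$ would force a factorization $Y_i\to Y_{i+1}\to X$ with $\dim Y_{i+1}<\dim Y_i=\dim X$, contradicting the birationality of $Y_{i+1}\to X$. Let $g\colon W\dashrightarrow Y'$ be the resulting good minimal model, and let $h\colon Y'\to Y$ be the ample model over $X$ of the semiample divisor $K_{Y'}+\Delta_{Y'}\simover{X}-\epsilon\Gamma_{Y'}$. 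Setting $\pi\colon Y\to X$ to be the composition, we find that $\pi$ is projective birational and $-\Gamma_Y$ is $\R$-Cartier and $\pi$-ample by construction.

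The main obstacle is verifying smallness of $\pi$. The key observation is that $X$, being normal, is regular at every codimension-one point, so $\Gamma$ is already $\R$-Cartier there. Consequently the reflexive graded $\cO_X$-algebra $\bigoplus_{m\ge 0}f_*\cO_W(\lfloor -m\epsilon\Gamma_W\rfloor)$, whose relative Proj computes $Y$, is locally a polynomial ring in one variable at such points. This forces $\pi$ to be an isomorphism in codimension one, which for a birational morphism of normal varieties is equivalent to smallness. Alternatively, one can track the MMP directly: each step contracts curves $C$ with $\Gamma_W\cdot C>0$, a component of $\Supp\Gamma_W$ cannot be contracted because $-\Gamma_Y$ must remain $\pi$-ample and nonzero on $Y$, and every component of $E$ must be contracted by $W\dashrightarrow Y$, as otherwise the negativity lemma applied to $K_W+D_W+E=f^*(K_X+D)$ would contradict the $\pi$-ampleness of $-\Gamma_Y$; but the Proj description makes the smallness more transparent.
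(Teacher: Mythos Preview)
Your proof follows the same skeleton as the paper's: take a dlt modification, perturb the boundary by $-\varepsilon\Gamma_W$, pass to the log canonical model over $X$, and then verify smallness. Two remarks on the execution.

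First, invoking Theorem~\ref{thm:gmm smaller coef} is a detour. Since $K_W+D_W+E\sim_{\R,X}0$, Theorem~\ref{thm:birkar gmm} applies directly to $(W,D_W+E-\varepsilon\Gamma_W)$ over $X$ and already gives the good minimal model; this is what the paper does. Your route is not circular (Theorem~\ref{thm:gmm smaller coef} is proved before this lemma and does not use it), just heavier than necessary.

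Second, for smallness the paper's argument is a one-liner you should absorb: since $-\Gamma_Y$ is $\pi$-ample, $\Exc(\pi)\subseteq\Supp(\Gamma_Y)$; but any $\pi$-exceptional divisor is the strict transform of a component of $E$ (as $W\dashrightarrow Y$ is a birational contraction), and $E_Y$ and $\Gamma_Y$ share no components since $\Gamma_W$ is supported on $D_W$. Hence $E_Y=0$ and $\pi$ is small. Your Proj description is correct in spirit (at codimension-one points of $X$ the morphism $f$ is already an isomorphism, so the graded algebra is locally free of rank one in each degree and its Proj is trivial there), though some care is needed with $\R$-coefficients and with identifying $Y$ as that Proj. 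Your alternative MMP-tracking argument, however, has gaps: the reason no component of $\Gamma_W$ is contracted is simply that it dominates a divisor on $X$ and the MMP is over $X$---your ``$-\Gamma_Y$ must remain nonzero'' reasoning does not rule out losing individual components---and the invocation of the negativity lemma on $K_W+D_W+E=f^*(K_X+D)$ to force $E_Y=0$ is not spelled out and does not obviously yield a contradiction with the $\pi$-ampleness of $-\Gamma_Y$ as stated.
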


\begin{proof}
We reproduce the proof for the reader's convenience. 
Let $W\to X$ be a dlt modification of $(X,D)$ and let $E$ be the sum of its exceptional divisors. Then $(W,D_W+E)$ is dlt and
\[K_W+D_W+E\simover{X} 0.\]
Choose some $0<\varepsilon\ll 1$ such that $D\ge \varepsilon \Gamma$. By Theorem \ref{thm:birkar gmm}, the pair $(W,D_W-\varepsilon \Gamma_W +E)$ has a good minimal model over $X$. Let $\pi\colon Y\to X$ be the corresponding log canonical model over $X$. Note that $K_Y+D_Y+E_Y\simover{X} 0$, hence
\[K_{Y}+D_{Y}-\varepsilon \Gamma_{Y}+E_{Y}\simover{X} -\varepsilon \Gamma_{Y}\]
is $\R$-Cartier and $\pi$-ample by construction. Let us show that $\pi$ is small; in other words, $E_Y=0$. To see this, we observe that as $-\Gamma_Y$ is $\pi$-ample, the $\pi$-exceptional locus is necessarily contained in the support of $\Gamma_Y$. Since $E_Y$ and $\Gamma_Y$ do not have any common component, we deduce that $E_Y=0$. This finishes the proof.
\end{proof}

Repeating this construction for each component of $D$, we get

\begin{coro} \label{cor:partial Q-factorial modification}
Let $(X,D)$ be a log canonical pair. Then there exists a small birational projective morphism $\pi\colon Y\to X$ such that $K_Y$ and every irreducible component of $D_Y$ are all $\Q$-Cartier.
\end{coro}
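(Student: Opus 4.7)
The plan is to apply Lemma \ref{lem:small Q-Cartier modification} iteratively, once for each prime component of $D$, producing a tower of small modifications that turn the components $\Q$-Cartier one at a time. Write $D=\sum_{i=1}^n a_i D_i$ with $D_i$ prime and set $X_0:=X$. Inductively, I suppose a small birational projective morphism $\mu_{i-1}\colon X_{i-1}\to X$ has been constructed with every $(D_j)_{X_{i-1}}$ ($j<i$) being $\Q$-Cartier. Smallness of $\mu_{i-1}$ keeps $(X_{i-1}, D_{X_{i-1}})$ log canonical, so I apply Lemma \ref{lem:small Q-Cartier modification} with $\Gamma=(D_i)_{X_{i-1}}$ to obtain a small birational projective morphism $\pi_i\colon X_i\to X_{i-1}$ with $-(D_i)_{X_i}$ being $\R$-Cartier and $\pi_i$-ample.

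Two things need to be checked at each step. First, although the lemma only gives $\R$-Cartierness, the strict transform $(D_i)_{X_i}$ is an integer Weil divisor (it is prime), so I will use the following standard upgrade: for an inclusion of abelian groups $M\hookrightarrow N$ with $N$ torsion-free, one has $(M\otimes\R)\cap N=(M\otimes\Q)\cap N$ inside $N\otimes\R$ (both conditions are equivalent to the image in $N/M$ being torsion). Applied to $\mathrm{CDiv}(X_i)\hookrightarrow\mathrm{WDiv}(X_i)$, this shows $(D_i)_{X_i}$ is $\Q$-Cartier. Second, the previously $\Q$-Cartier components must remain so under $\pi_i$: since $\pi_i$ is small (no exceptional divisors), the strict transform of any $\R$-Cartier Weil divisor on $X_{i-1}$ agrees with its pullback, so $(D_j)_{X_i}=\pi_i^*(D_j)_{X_{i-1}}$ is $\Q$-Cartier for every $j<i$, completing the inductive step.

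Setting $Y:=X_n$ and $\pi:=\mu_n\colon Y\to X$, I obtain a small birational projective morphism such that every $(D_i)_Y$ is $\Q$-Cartier. For $K_Y$, smallness of $\pi$ and $\R$-Cartierness of $K_X+D$ give $K_Y+D_Y=\pi^*(K_X+D)$, which is $\R$-Cartier; since each $(D_i)_Y$ is $\Q$-Cartier, the $\R$-divisor $D_Y=\sum a_i(D_i)_Y$ is $\R$-Cartier, hence so is $K_Y$, and the torsion-free principle once more upgrades $K_Y$ from $\R$-Cartier to $\Q$-Cartier (it is an integer Weil divisor). The only conceptual subtlety in the argument is this $\R$-to-$\Q$ Cartier upgrade; everything else is a direct iteration of Lemma \ref{lem:small Q-Cartier modification} combined with the identity ``strict transform $=$ pullback'' for small birational morphisms.
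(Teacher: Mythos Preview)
Your proof is correct and follows essentially the same approach as the paper: the paper's entire proof is the single sentence ``Repeating this construction for each component of $D$, we get\ldots'', and you have carried out exactly this iteration of Lemma~\ref{lem:small Q-Cartier modification} while filling in the details the paper leaves implicit (the $\R$-to-$\Q$ Cartier upgrade for integral Weil divisors, the preservation of $\Q$-Cartierness under small pullback, and the deduction for $K_Y$).
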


By \cite{BCHM10}, every variety that underlies a klt pair has a small $\Q$-factorial modification, but this is no longer true in the log canonical case. The construction above thus provides a small ``partial $\Q$-factorial'' modification for log canonical pairs. We should point out that, if the pair $(X,D)$ is only slc, then it may not be possible to find a small modification as in Corollary \ref{cor:partial Q-factorial modification}. In fact, if $X$ is a surface, then the only small birational modification is the identity, but not every irreducible component of $D$ is $\bQ$-Cartier (e.g. if the component intersects some double locus where two components of $X$ meet).

\medskip

We next define log canonical models and good minimal models for pairs of log canonical type.

\begin{defi}\label{defi:log canonical models}
Let $(X,\Delta)$ be a pair of log canonical type over $B$. Suppose that $X\dashrightarrow Z$ is a birational contraction over $B$. Then $(Z,\Delta_Z)$ is called a log canonical model (resp. good minimal model) of $(X,\Delta)$ over $B$ if there exists a small birational projective morphism $\pi\colon Y\to X$ such that $K_Y+\Delta_Y$ is $\R$-Cartier and $(Z,\Delta_Z)$ is a log canonical model (resp. good minimal model) of $(Y,\Delta_Y)$ over $B$.
\end{defi}

Similarly, we can define weak log canonical models for pairs of log canonical type. We leave the details to the reader.

\begin{rem}\label{rem:log canonical models are well-defined}
   The existence of such a small birational projective morphism is guaranteed by Lemma \ref{lem:small Q-Cartier modification}. Since $K_Y+\Delta_Y$ is $\R$-Cartier, $(Y,\Delta_Y)$ is a log canonical pair. Thus we can talk about the log canonical model (resp. good minimal models) of $(Y,\Delta_Y)$ over $B$. By Lemma \ref{lem:compare gmm existence on bir model}, the existence of the log canonical model (resp. good minimal models) is independent of the choice of the small birational projective morphisms, and the log canonical model of $(X,\Delta)$ over $B$, if exists, is unique. Moreover, it coincides with the standard definition of the log canonical model (resp. good minimal models) of $(X,\Delta)$ over $B$ if $K_X+\Delta$ is $\R$-Cartier. 
\end{rem}

\begin{rem}\label{rem:different ways to define log canonical models are the same}
Let $g\colon Y\to X$ be any birational projective morphism such that $(Y,\Delta_Y+E)$ is a log canonical pair where $E$ is the sum of all the $g$-exceptional divisors. Then by Lemma \ref{lem:compare gmm existence on bir model}, the log canonical model of $(X,\Delta)$ over $B$ defined above is the same as the log canonical model of $(Y,\Delta_Y+E)$ over $B$. Since $(X,\Delta)$ is a pair of log canonical type over $B$, there exists an $\R$-divisor $D\ge \Delta$ on $X$ such that $(X,D)$ is log canonical. In practice, we usually choose $g$ to be a dlt modification of $(X,D)$ or a log resolution of $(X,D)$. It is convenient to work with these special birational projective morphisms sometimes.
\end{rem}

The results from the previous subsection then have straightforward generalizations to the non-$\R$-Cartier setting. In particular, we have

\begin{thm} \label{thm:lc model exists non-R-Cartier}
Let $(X,D)$ be a pair of log canonical type over $B$ and let $0\le \Delta\le D$ be an $\R$-divisor. Assume that $(X,D)$ has a good minimal model over $B$, $K_X+\Delta$ is pseudo-effective (resp. big) over $B$, and $\Supp(\Delta)=\Supp(D)$. Then $(X,\Delta)$ has a good minimal model (resp. log canonical model) over $B$.
\end{thm}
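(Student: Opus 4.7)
The plan is to reduce Theorem \ref{thm:lc model exists non-R-Cartier} to its $\R$-Cartier analogue Theorem \ref{thm:gmm smaller coef} by replacing $X$ with a single small partial $\bQ$-Cartier modification on which both $K+D$ and $K+\Delta$ become $\R$-Cartier.

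First, since $(X,D)$ is of log canonical type, I would fix an $\R$-divisor $D'\ge D$ on $X$ such that $(X,D')$ is log canonical, and apply Corollary \ref{cor:partial Q-factorial modification} to $(X,D')$ to obtain a small birational projective morphism $\pi\colon Y\to X$ such that $K_Y$ and every irreducible component of $D'_Y$ are $\bQ$-Cartier. Since every irreducible component of $D$ and of $\Delta$ is also a component of $D'$, both $K_Y+D_Y$ and $K_Y+\Delta_Y$ are $\R$-Cartier. As $\pi$ is small and $K_Y+D'_Y$ is $\R$-Cartier, a common log resolution argument shows that the birational pullback identity $K_Y+D'_Y=\pi^{*}(K_X+D')$ holds, so $(Y,D'_Y)$ is log canonical; hence $(Y,D_Y)$ is log canonical and $(Y,\Delta_Y)$ is of log canonical type.

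Next, by Remark \ref{rem:log canonical models are well-defined}, the existence of a good minimal model or a log canonical model for a pair of log canonical type is independent of the chosen small $\R$-Cartier modification. Consequently, the hypothesis that $(X,D)$ has a good minimal model over $B$ translates into: $(Y,D_Y)$ has a good minimal model over $B$. Moreover, since $\pi$ is an isomorphism in codimension one, pseudo-effectivity, bigness and supports are all preserved in passing to $Y$; in particular $K_Y+\Delta_Y$ is pseudo-effective (resp.\ big) over $B$ and $\Supp(\Delta_Y)=\Supp(D_Y)$.

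At this point I would apply Theorem \ref{thm:gmm smaller coef} to the log canonical pair $(Y,D_Y)$ and the $\R$-divisor $\Delta_Y\le D_Y$: all hypotheses are met, yielding that $(Y,\Delta_Y)$ has a good minimal model or a Mori fiber space over $B$; the Mori fiber case is excluded by pseudo-effectivity of $K_Y+\Delta_Y$, so $(Y,\Delta_Y)$ admits a good minimal model over $B$, and by Definition \ref{defi:log canonical models} this is a good minimal model of $(X,\Delta)$ over $B$. When $K_X+\Delta$ is additionally big over $B$, the same holds for $K_Y+\Delta_Y$, so the good minimal model is automatically a log canonical model, and $(X,\Delta)$ admits a log canonical model over $B$. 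The only real obstacle is producing a \emph{single} small $\pi$ making the two relevant divisors simultaneously $\R$-Cartier; this is precisely what Corollary \ref{cor:partial Q-factorial modification}, applied to $(X,D')$, supplies.
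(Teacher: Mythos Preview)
Your proof is correct and follows essentially the same approach as the paper: apply Corollary \ref{cor:partial Q-factorial modification} to produce a single small modification $Y\to X$ on which both $K_Y+D_Y$ and $K_Y+\Delta_Y$ are $\R$-Cartier, transfer the good minimal model hypothesis via Remark \ref{rem:log canonical models are well-defined}, and then invoke Theorem \ref{thm:gmm smaller coef}. Your version is in fact slightly more careful than the paper's, since you explicitly pass to a log canonical $D'\ge D$ before applying Corollary \ref{cor:partial Q-factorial modification} (the corollary requires a log canonical pair as input, and $(X,D)$ is only of log canonical type); the paper's proof glosses over this. One minor phrasing issue: in the big case, the good minimal model is not itself the log canonical model---rather, the log canonical model is the ample model of $K_Y+\Delta_Y$ on the good minimal model---but the existence conclusion you draw is of course correct.
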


Here we say that $K_X+\Delta$ is pseudo-effective over $B$ if there exists a small birational projective morphism $Y\to X$ such that $K_Y+\Delta_Y$ is $\R$-Cartier and pseudo-effective over $B$. This notion is independent of the choice of the small birational projective morphisms.

\begin{proof}
By Corollary \ref{cor:partial Q-factorial modification}, there exists a small birational projective morphism $\pi\colon Y\to X$ such that $K_Y+D_Y$ and $K_Y+\Delta_Y$ are both $\R$-Cartier. By Remark \ref{rem:log canonical models are well-defined}, we see that $(Y,D_Y)$ has a good minimal model over $B$. Thus our assumptions imply that $(Y,D_Y)$ and $\Delta_Y$ satisfy the assumptions of Theorem \ref{thm:gmm smaller coef}, which then implies that $(Y,\Delta_Y)$ has a good minimal model over $B$. If $K_X+\Delta$ is big over $B$, so is $K_Y+\Delta_Y$ (since $\pi$ is small), hence we also get a log canonical model of $(Y,\Delta_Y)$ over $B$. By definition, it follows that $(X,\Delta)$ has a good minimal model (resp. log canonical model if $K_X+\Delta$ is big over $B$) over $B$.
\end{proof}

\section{MMP for locally stable families} \label{sec:MMP locally stable}

\subsection{MMP with scaling}

The goal of this section is to develop an MMP theory for locally stable families. The main result is the following statement. This is another main ingredient in the construction of the reduction morphisms. 

\begin{thm} \label{thm:MMP locally stable family}
Let $f\colon (X,D)\to B$ be a locally stable family over a normal variety with normal generic fiber. Then for any $f$-ample $\R$-divisor $H$ on $X$, we may run the $(K_{X/B}+D)$-MMP over $B$ with scaling of $H$. Moreover, the MMP terminates with a good minimal model over $B$ if the generic fiber has a good minimal model.
\end{thm}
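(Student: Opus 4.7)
The plan is to reduce to the classical log canonical MMP via a base change, and then descend each MMP step back to the original family. As the remark following the theorem anticipates, the crucial tool in the descent is the invariance of plurigenera for locally stable families with good minimal models.

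To begin, I would take a log resolution $\sigma\colon B'\to B$ and form the locally stable base change $f'\colon (X',D')\to B'$ together with the induced birational morphism $\tau\colon X'\to X$. Since $B'$ is smooth, Theorem \ref{thm:locally stable smooth base} implies that $(X',D')$ is log canonical as a pair, so the standard log canonical MMP machinery is available. Choosing an $f'$-ample perturbation $H'$ of $\tau^*H$ of the form $\tau^*H+\varepsilon A$ for small $\varepsilon>0$ and some $\R$-divisor $A$ ample over $B$, one can run the $(K_{X'/B}+D')$-MMP over $B$ with scaling of $H'$ via \cite{HH20}. The generic fibers of $(X',D')\to B'$ and $(X,D)\to B$ agree, so if the latter admits a good minimal model then Theorem \ref{thm:MMP scaling fiber has gmm} applies upstairs and guarantees termination with a good minimal model over $B$.

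The heart of the argument is then to descend each step of the upstairs MMP to a step of the $(K_{X/B}+D)$-MMP on $(X,D)/B$. I would prove two separate descent statements: extremal divisorial contractions descend along arbitrary reduced base change, and flips descend along normal base change. In both cases the mechanism is to realize the ample model of a $(K_{X/B}+D+tH)$-trivial extremal face directly over $B$. Concretely, invariance of plurigenera for locally stable families should yield an identification
\[\sigma_* f'_*\cO_{X'}(m(K_{X'/B}+D'+tH')) \;=\; f_*\cO_X(m(K_{X/B}+D+tH))\]
for $m$ sufficiently divisible, so that the $\Proj$ taken downstairs recovers the same model as the $\Proj$ taken upstairs. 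Combined with a descent statement for locally stable (and intermediate) families in the spirit of Lemma \ref{lem:descend stable family}, this transports each contraction or flip to a morphism or birational map on $X$ over $B$. Iterating yields the full MMP, and termination upstairs forces termination downstairs.

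The main obstacle will be establishing the descent step, particularly for flips. Divisorial contractions are proper morphisms with connected fibers and descend once the pushforward sheaves are shown to match, essentially by Stein factorization. Flips, however, require the construction of a normal flipped variety over $B$ as a $\Proj$ of the relative log canonical section ring, demanding both its finite generation over $B$ and the compatibility of this finite generation with base change. Proving invariance of plurigenera in the locally stable log canonical setting --- rather than in the classical klt case handled in \cite{ABIP23} --- is the principal technical novelty, and I expect it to absorb most of the effort.
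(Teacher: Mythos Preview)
Your overall strategy---base change to a resolution of $B$, run the log canonical MMP there, and descend each step---is exactly the paper's approach, and your identification of invariance of plurigenera as the engine behind descending the contractions is correct (this is the content of Lemmas \ref{lem:h^0 constant smooth base} and \ref{lem:ample model commutes with base change}). However, there are two concrete issues.

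First, the perturbation $H'=\tau^*H+\varepsilon A$ is both unnecessary and harmful. Since ampleness is preserved by base change, $\tau^*H$ is already $f'$-ample, and the paper runs the upstairs MMP with scaling of exactly this pullback. Using a different scaling divisor upstairs breaks the synchronization: the thresholds $t_i$ and the extremal faces contracted need not match those of the $(K_{X/B}+D)$-MMP with scaling of $H$ that you are trying to construct downstairs.

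Second, and more seriously, your mechanism for descending flips does not work as written. Your displayed identity concerns pushforwards to $B$ and, via cohomology and base change, it does descend the ample model of $K_{X/B}+D+t_iH$ over $B$---that is, the contraction $X_i\to Z_i$. But the flip $X_{i+1}\to Z_i$ is a $\Proj$ over $Z_i$, not over $B$, and $K_{X_i/B}+D_i+(t_i-\varepsilon)H_i$ is \emph{not} nef on $X_i$, so Lemma \ref{lem:h^0 constant smooth base} does not apply to it. The paper's route is genuinely different: one first descends the contraction to obtain a locally stable family $(Z_i,\varphi_{i*}(D_i+t_iH_i))\to B$ (Lemma \ref{lem:ample model stable}), and then applies Lemma \ref{lem:small Q-Cartier modification over normal base}, whose proof constructs the small modification upstairs, upgrades it to a \emph{stable} family by adding a general relatively ample divisor, shows this stable family is \emph{trivial along the exceptional fibers} of $B'\to B$ (Lemma \ref{lem:family trivial}, using that divisors of negative discrepancy arise from strata blowups and hence have no moduli in a constant family), and finally invokes Lemma \ref{lem:descend stable family}. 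Normality of $B$ enters precisely here, to guarantee connectedness of the exceptional fibers. Your proposal gestures at Lemma \ref{lem:descend stable family} but does not supply the triviality input needed to use it; without that geometric argument, there is no way to produce $X_{i+1}$ over $B$.
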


When $K_{X/B}+D$ is pseudo-effective over $B$ and the $(K_{X/B}+D)$-MMP over $B$ terminates, we get a minimal model $(X_m,D_m)\to B$ which is locally stable such that $K_{X_m/B}+D_m$ is nef over $B$. We say that it is a good minimal model of $(X,D)\to B$ if $K_{X_m/B}+D_m$ is semiample over $B$. If in addition $K_{X/B}+D$ is big over $B$, we may also define the log canonical model of $(X,D)\to B$ as the ample model of $K_{X_m/B}+D_m$ over $B$. It is necessarily also the ample model of $K_{X/B}+D$ over $B$. All these are analogous to Definition \ref{defi:good minimal model}.

To prove Theorem \ref{thm:MMP locally stable family}, we shall first take a resolution $\tB\to B$ of singularities, run the MMP after the base change, and descend every step of the MMP to $B$. As such, we need to analyze how maps such as extremal contractions and flips descend. The next several lemmas are designed to take care of these.

The first ingredient is the following invariance of plurigenera for good minimal models of locally stable families. By cohomology and base change, it eventually translates into the descent property of the extremal contractions.

\begin{lemma}\label{lem:h^0 constant smooth base}
Let $f\colon (X,D)\to B$ be a locally stable family over a connected reduced scheme with normal generic fibers. Assume that $D$ is a $\Q$-divisor, $K_{X/B}+D$ is nef over $B$ and every generic fiber of $f$ has a good minimal model. Then for any sufficiently divisible positive integer $m$, the dimension of $H^0(X_b, \cO_{X_b}(m(K_{X_b}+D_b)))$ is independent of $b\in B$.
\end{lemma}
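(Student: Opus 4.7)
\emph{Outline of the plan.} The idea is to reduce to a smooth base, use the generic-fiber hypothesis to obtain $f$-semiampleness of $K_{X/B}+D$, and then apply cohomology and base change on the resulting relative ample model.

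\emph{Reduction to smooth base.} Since $h^0(X_b, \cO_{X_b}(m(K_{X_b}+D_b)))$ is invariant under extension of residue fields, we may base-change along resolutions $\widetilde B \to B$ of the irreducible components of $B$. Pullback of a locally stable family along a morphism from a normal base remains locally stable (with the same generic fibers), and the nefness of $K_{X/B}+D$ is preserved, so we may assume $B$ is smooth. By Theorem \ref{thm:locally stable smooth base}, the pair $(X,D)$ is then lc, and $X$ is normal: the generic fibers being normal together with flatness over a smooth base and $S_2$ fibers yield normality of the total space.

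\emph{Relative semiampleness.} Since $(X_\eta, D_\eta)$ admits a good minimal model and $K_{X_\eta}+D_\eta$ is already nef, $(X_\eta, D_\eta)$ is itself a good minimal model, so $K_{X_\eta}+D_\eta$ is semiample by Remark \ref{rem:gmm indep of model}. I would then promote this to $f$-semiampleness of $K_{X/B}+D$ via the relative base-point-free theorem in the lc setting. This yields a relative ample model $\phi \colon X \to Y$ over $B$, with $\pi \colon Y \to B$ and $m(K_{X/B}+D) = \phi^*A$ for some $\pi$-ample Cartier divisor $A$ on $Y$ and sufficiently divisible $m$.

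\emph{Cohomology and base change, and the main obstacle.} Since $\phi$ is a contraction, $\phi_*\cO_X = \cO_Y$, and the projection formula gives $f_*\cO_X(m(K_{X/B}+D)) = \pi_*\cO_Y(A)$. For $m$ large enough that $A$ is $\pi$-very-ample with vanishing higher direct images, Grauert's theorem yields $\pi_*\cO_Y(A)$ locally free, with $h^0(Y_b, A_b)$ locally constant; combined with the fiberwise identification $h^0(X_b, \cO_{X_b}(m(K_{X_b}+D_b))) = h^0(Y_b, A_b)$ (which requires that the relative ample model restricts to the fiberwise ample model, and holds in the locally stable setting for divisible $m$), this gives the desired constancy. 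The main obstacle is the relative semiampleness step: relative abundance for lc pairs is generally open, and the generic-fiber good minimal model hypothesis is precisely what makes the lc relative base-point-free theorem applicable; a secondary subtlety is verifying that the ample model formation commutes with restriction to fibers, which requires the locally stable structure and the Cartier property of $m(K_{X/B}+D)$ for suitably divisible $m$.
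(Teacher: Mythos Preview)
Your overall strategy matches the paper's: reduce to a smooth base, establish relative semiampleness of $K_{X/B}+D$, pass to the relative ample model $\phi\colon X\to Y$ over $B$, and apply cohomology and base change there. However, the step you label a ``secondary subtlety'' is in fact the heart of the argument, and you have not supplied it.

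Concretely, the identification $h^0(X_b,\cO_{X_b}(m(K_{X_b}+D_b)))=h^0(Y_b,A_b)$ is equivalent to $(\phi_b)_*\cO_{X_b}\cong\cO_{Y_b}$. You assert this ``holds in the locally stable setting,'' but no such statement is available at this point: indeed, the present lemma is precisely what the paper uses (in Lemma~\ref{lem:ample model commutes with base change}) to \emph{prove} that formation of the ample model commutes with base change, so invoking that compatibility here is circular. A priori the scheme-theoretic fiber $Y_b$ need not even be reduced, and $\phi_b\colon X_b\to Y_b$ need not be a contraction. The paper handles this by applying the canonical bundle formula to write $K_X+D\sim_{\Q}\phi^*(K_Y+\Delta+M)$ with $(Y,\Delta+M)$ a generalized pair, showing that $(Y,\Delta+g^*\Gamma+M)$ is generalized log canonical for any SNC divisor $\Gamma\subset B$, and then using the seminormality of (intersections of) generalized lc centers to conclude that each $Y_b$ is reduced and seminormal. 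Only then does Lemma~\ref{lem:pushforward O_X = O_Y when Y seminormal} give $(\phi_b)_*\cO_{X_b}\cong\cO_{Y_b}$.

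There is a closely related second gap: your appeal to Grauert/cohomology and base change on $Y$ requires $\pi\colon Y\to B$ to be flat (so that $\cO_Y(A)$ is $\pi$-flat), which you have not checked. The paper verifies flatness separately by showing that $\pi$ has geometrically reduced equidimensional fibers and is universally open, then invoking \cite{Kol23}*{Lemma 10.58}. Finally, your semiampleness step is essentially correct but needs the input that every lc center of $(X,D)$ dominates $B$ (from \cite{Kol23}*{Corollary 4.56}) before the results of \cite{HX13} and \cite{Has19} apply; you should make this explicit.
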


\begin{proof}
We may freely shrink $B$ around any chosen point as needed. In particular, we may assume that $B$ is quasi-projective. Since the assumptions and the statement are preserved by any birational proper base change, we may also replace $B$ by a resolution of singularities and assume that $B$ is a smooth variety. Note that this implies that $X$ is normal, since the fibers of $X\to B$ are all reduced and $S_2$ while the generic fiber is normal, and together with the smoothness of the base these properties imply that $X$ is smooth in codimension one and $S_2$. Then $(X,D)$ is log canonical by Theorem \ref{thm:locally stable smooth base}, and every log canonical center of $(X,D)$ dominates $B$ by \cite{Kol23}*{Corollary 4.56}. By \cite{HX13}*{Theorem 1.1} and \cite{Has19}*{Theorem 1.2}, we deduce that $K_{X/B}+D$ is semiample over $B$. Let $b\in B$ and let $\Gamma=\sum\Gamma_i$ be any simple normal crossing divisor on $B$ containing $b$ as a $0$-dimensional stratum where $\Gamma_i$ are the irreducible components of $\Gamma$. By Theorem \ref{thm:locally stable smooth base}, the pair $(X,D+f^*\Gamma)$ is log canonical. 

Let $\pi\colon X\to Y$ be the ample model of $K_{X/B}+D$ over $B$ and let $g\colon Y\to B$ be the induced morphism. In particular, $K_X+D\sim_{\Q,Y}0$. By the canonical bundle formula \cite{Kol07-canonical-bundle-formula}*{Theorem 8.5.1}, we may write 
\[K_X+D\sim_{\Q}\pi^*(K_Y+\Delta+M)\] 
where $(Y,\Delta+M)$ is a generalized pair over $B$ and $(Y,\Delta+g^*\Gamma+M)$ is generalized log canonical. By \cite{LX23}*{Theorem 4.10} and \cite{Kol13}*{Theorem 9.26}, any union (with the reduced scheme structure) of the generalized log canonical centers of $(Y,\Delta+g^*\Gamma+M)$ is seminormal. By \cite{LX23}*{Lemma 3.17(3)}, any intersection of unions of generalized log canonical centers is a union of generalized log canonical centers. Putting all these together, we deduce that any scheme-theoretic intersection of unions of generalized log canonical centers is reduced by \cite{Kol13}*{Lemma 10.21} and thus seminormal. Since $(Y,\Delta+g^*\Gamma+M)$ is generalized log canonical, each $g^*\Gamma_i$ is reduced and is a union of generalized log canonical centers of $(Y,\Delta+g^*\Gamma+M)$. Since $Y_b$ is a connected component of the scheme-theoretic intersection of all $g^*\Gamma_i$, we deduce that $Y_b$ is reduced and seminormal. 

We claim that $g$ is flat. Morally this follows from a generalized log canonical version of Theorem \ref{thm:locally stable smooth base}. As such a version does not seem available in the literature, we will deduce flatness using some results in \cite{Kol23}. Since $Y_b$ is geometrically reduced for every closed point $b\in B$, so is $Y_b$ for every (possibly non-closed) $b\in B$ since the set $\{b\in B\,|\,Y_b\text{ is geometrically reduced over }k(b)\}$ is constructible in $B$. Since $f\colon X\to B$ is flat, every fiber of $f$ is of pure dimension $\dim X-\dim B$. Since the dimensions of the fibers of $X\to Y$ and $Y\to B$ are upper semicontinuous and $(\dim X_b-\dim Y_b)+\dim Y_b=\dim X_b$ is constant, we deduce that every fiber of $g\colon Y\to B$ is of pure dimension $\dim Y-\dim B$. By \cite{Kol23}*{2.71.2}, $g$ is universally open. Thus the morphism $g$ is pure dimensional by \cite{Kol23}*{Claim 2.71.1}. Since $g$ is pure dimensional and has geometrically reduced fibers, $g$ is flat by \cite{Kol23}*{Lemma 10.58}.

Let $L=K_Y+\Delta+M$. Then by Lemma \ref{lem:pushforward O_X = O_Y when Y seminormal}, we have $(\pi_b)_*\cO_{X_b} \cong \cO_{Y_b}$ and hence the projection formula gives
\begin{equation} \label{eq:H^0 equal}
    H^0(X_b, \cO_{X_b}(m(K_{X_b}+D_b)))\cong H^0(X_b, \cO_{X_b}(m\pi_b^*L_b)) \cong H^0(Y_b,\cO_{Y_b}(mL_b))
\end{equation}
for any sufficiently divisible integer $m$. Note that $L$ is ample over $B$ by construction, hence $R^ig_*\cO_Y(mL)=0$ for any $i>0$ and any sufficiently divisible positive integer $m$. By cohomology and base change, it follows that $h^0(Y_b,\cO_{Y_b}(mL_b))$ is independent of $b\in B$, hence the lemma follows from \eqref{eq:H^0 equal}.
\end{proof}

The following lemma is used in the above proof.

\begin{lemma}\label{lem:pushforward O_X = O_Y when Y seminormal}
Let $f\colon X\to Y$ be a proper surjective morphism of reduced schemes of finite type over an algebraically closed field of characteristic $0$. Assume that $f$ has connected fibers and $Y$ is seminormal. Then $f_*\cO_X\cong\cO_Y$.
\end{lemma}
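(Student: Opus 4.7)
The plan is to reduce the lemma to the defining property of seminormality, via the Stein factorization of $f$. Take the Stein factorization $f = g\circ f'$, where $f'\colon X\to Y' := \Spec_Y f_*\cO_X$ satisfies $f'_*\cO_X = \cO_{Y'}$ and $g\colon Y'\to Y$ is finite. The desired identity $f_*\cO_X \cong \cO_Y$ is then equivalent to asserting that $g$ is an isomorphism, which is what I would aim to show.

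First I would check that $Y'$ is reduced: since $X$ is reduced, $\cO_X$ has no nilpotent sections, and the identification $\cO_{Y'}(U) = \cO_X(f'^{-1}(U))$ forces the same for $\cO_{Y'}$. Next I would show that $g$ is a finite universal homeomorphism with trivial residue field extensions. The hypothesis that $f$ has connected fibers, combined with the (geometrically) connected fibers of $f'$ produced by Stein factorization, forces each fiber of $g$ to be a single point; together with finiteness, this makes $g$ a universal homeomorphism. At a closed point $y\in Y$ the residue field $k(y)=k$ is algebraically closed, so the unique preimage has residue field equal to $k$. For the generic point $\eta$ of an irreducible component $Z\subset Y$, bijectivity of $g$ on closed points forces exactly one component $Z'\subset Y'$ to map dominantly to $Z$ and to be generically injective on closed points; in characteristic $0$ this yields $k(Z')=k(Z)$. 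An analogous analysis at every point then gives trivial residue field extensions throughout.

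Finally I would invoke the characterization of seminormality (see e.g. \cite{Kol13}*{Section 10.2}, or Swan/Traverso): a reduced scheme $Y$ is seminormal precisely when every finite universal homeomorphism $Y'\to Y$ with $Y'$ reduced and trivial residue field extensions at all points is an isomorphism. This forces $g$ to be an isomorphism, yielding $f_*\cO_X = g_* f'_*\cO_X = g_*\cO_{Y'} = \cO_Y$.

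The main delicate point is verifying that the residue field extensions are trivial at \emph{all} points, not just the closed ones. The closed-point case is immediate from the algebraic closedness of $k$, but handling generic points of irreducible components, and by extension all intermediate points, requires the combinatorial argument sketched above. If this becomes awkward to write cleanly, an alternative is to work affine-locally and argue directly by Nakayama's lemma that the finitely generated $\cO_Y$-module $\cO_{Y'}/\cO_Y$ has vanishing stalks, invoking Swan's ring-theoretic criterion for seminormality to produce the necessary sections of $\cO_Y$.
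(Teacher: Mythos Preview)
Your proposal is correct and follows essentially the same approach as the paper: take the Stein factorization, observe that the intermediate scheme is reduced and that the finite map is bijective, and conclude via the defining property of seminormality. The paper handles your ``delicate point'' about residue fields at non-closed points by directly citing \cite{Kol13}*{Definition 10.11} (partial seminormalization) and the remark thereafter, which in characteristic $0$ over an algebraically closed field makes the residue-field condition automatic; you spell this out by hand, but the content is the same.
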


\begin{proof}
This is well known. We take a Stein factorization $f=h\circ g$, where $g\colon X\to Z$ is proper, $h\colon Z\to Y$ is finite, and $g_*\cO_X\cong\cO_Z$. In particular, $g$ is surjective. Since $f$ has connected fibers and is surjective, we deduce that $h$ is both injective and surjective. Since $X$ is reduced and $g_*\cO_X\cong\cO_Z$, $Z$ is also reduced. Thus $h\colon Z\to Y$ is a partial seminormalization by \cite{Kol13}*{Definition 10.11} and the remark thereafter, since we are working over an algebraically closed field of characteristic $0$. Since $Y$ is seminormal, we deduce that $h$ is an isomorphism and thus $f_*\cO_X\cong\cO_Y$.
\end{proof}

A direct consequence of Lemma \ref{lem:h^0 constant smooth base} is the following statement. Among other things, it guarantees that the extremal contractions in the MMP descend. For a related result when the log canonical divisor is relatively semiample and big, see \cite{Kol23}*{Proposition 8.35}.

\begin{lemma} \label{lem:ample model commutes with base change}
Let $f\colon (X,D)\to B$ be a locally stable family over a reduced scheme with normal generic fibers. Let $\tB\to B$ be a proper surjective morphism from a reduced scheme and let $\tf\colon(\tX,\tD)\to \tB$ be the locally stable family obtained by base change via $\tB\to B$. Assume that $K_{\tX/\tB}+\tD$ is nef over $\tB$ and every normal generic fiber of $\tf$ has a good minimal model. Then $K_{X/B}+D$ is semiample over $B$, and the formation of its ample model commutes with base change.
\end{lemma}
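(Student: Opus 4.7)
The plan is to apply Lemma~\ref{lem:h^0 constant smooth base} to $f$ itself, use Grauert's theorem to obtain base-change compatibility of the direct images $f_*\cO(mL)$ (with $L:=K_{X/B}+D$), and then descend semiampleness from a smooth resolution of $B$. Since both semiampleness of $L$ and the formation of the ample model are local on $B$ and well-behaved under passing to irreducible components, we may assume $B$ is an irreducible variety.

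We first verify the hypotheses of Lemma~\ref{lem:h^0 constant smooth base} for $f$. The divisor $L$ is nef over $B$: any curve $C\subset X_b$ has a base change $\tilde C=C\times_{k(b)}k(\tilde b)\subset\tX_{\tilde b}$ for any preimage $\tilde b$ of $b$ in $\tB$, and $L\cdot C=(K_{\tX/\tB}+\tD)\cdot\tilde C\ge 0$ by flat base change of intersection numbers together with the hypothesis over $\tB$. The generic fiber $(X_\eta,D_\eta)$ has a good minimal model: for the generic point $\tilde\eta$ of an irreducible component of $\tB$ dominating $B$, the fiber $\tX_{\tilde\eta}=X_\eta\otimes_{k(\eta)}k(\tilde\eta)$ is normal in characteristic zero, hence admits a good minimal model by hypothesis; combined with the nefness of $K+\tD_{\tilde\eta}$ this forces $\tX_{\tilde\eta}$ itself to be the good minimal model, so $K+\tD_{\tilde\eta}$ is semiample, and semiampleness descends under the field extension $k(\eta)\hookrightarrow k(\tilde\eta)$ to give semiampleness of $K_{X_\eta}+D_\eta$. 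Assuming $D$ is a $\Q$-divisor (the $\R$-case to be handled separately), Lemma~\ref{lem:h^0 constant smooth base} then gives constancy of $h^0(X_b,\cO_{X_b}(m(K+D)|_{X_b}))$ on $B$ for sufficiently divisible $m$; by Grauert's theorem on the irreducible reduced base $B$, $f_*\cO_X(mL)$ is locally free and commutes with arbitrary base change.

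Next, to establish semiampleness of $L$ over $B$, we pass to a resolution $g\colon B'\to B$ and form the locally stable pullback family $(X',D'):=(X,D)\times_B B'$. Over the smooth base $B'$, Theorem~\ref{thm:locally stable smooth base} shows that $(X',D')$ is log canonical, every log canonical center dominates $B'$ by \cite{Kol23}*{Corollary 4.56}, and the generic fiber of $(X',D')\to B'$ (which equals $(X_\eta,D_\eta)$ since $g$ is birational) has a good minimal model; together with the nefness of $L'=K_{X'/B'}+D'$, the results of \cite{HX13}*{Theorem 1.1} and \cite{Has19}*{Theorem 1.2} imply $L'$ is semiample over $B'$. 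We descend this using base-change compatibility: for any $x\in X$, pick $x'\in X'$ over $x$ (possible since $\pi\colon X'\to X$ is surjective) and a local section $s'$ of $f'_*\cO(mL')$ near $g(x')$ with $s'(x')\neq 0$. Via the isomorphism $f'_*\cO(mL')\cong g^*f_*\cO(mL)$ we may write $s'=\sum_i a_i\otimes s_i$ with $a_i\in\cO_{B',g(x')}$ and $s_i$ local sections of $f_*\cO(mL)$ near $f(x)$; evaluating at $x'$ gives $s'(x')=\sum_i a_i(g(x'))\,s_i(x)$ in the one-dimensional $k(x')$-space $\cO_{X'}(mL')|_{x'}=\cO_X(mL)|_x\otimes_{k(x)}k(x')$, forcing some $s_i(x)\neq 0$. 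Hence $f^*f_*\cO(mL)\to\cO_X(mL)$ is surjective, so $L$ is semiample over $B$, and $\Proj_B\bigoplus_m f_*\cO(mL)$ commutes with arbitrary base change by the compatibility of the pushforwards.

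The main obstacle is this descent step: the morphism $g\colon B'\to B$ is generally non-flat, so naive pullback cannot be used to reflect surjectivity of the evaluation map, and we genuinely rely on the base-change compatibility provided by Grauert's theorem applied to the plurigenera constancy of Lemma~\ref{lem:h^0 constant smooth base}. A secondary concern is extending from $\Q$- to $\R$-coefficients; one expects to handle this by writing $D$ as a positive $\R$-linear combination of rational divisors supported on $\Supp(D)$ (keeping the family locally stable at each rational approximation), applying the $\Q$-case, and invoking the convex stability of semiampleness.
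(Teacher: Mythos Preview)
Your argument in the $\Q$-coefficient case is correct and follows essentially the same route as the paper. Both establish semiampleness after passing to a smooth base (you use a resolution $B'\to B$; the paper uses a resolution of $\tB$), invoke Lemma~\ref{lem:h^0 constant smooth base} together with Grauert's theorem to get base-change compatibility of $f_*\cO_X(mL)$, and then conclude semiampleness over $B$. Your explicit section-descent is equivalent to the paper's fiberwise argument: once $mL_b$ is base point free for every $b$ and the pushforward commutes with base change, surjectivity of $f^*f_*\cO(mL)\to\cO(mL)$ follows by Nakayama. Your verification that $L$ is nef over $B$ and that the generic fiber has a good minimal model (so that Lemma~\ref{lem:h^0 constant smooth base} applies directly to $f$) is also fine.

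The genuine gap is in your $\R$-to-$\Q$ reduction. To ``apply the $\Q$-case'' to a rational approximation $D_i$ you need the hypotheses of the lemma to hold for $(X,D_i)$, in particular that $K_{\tX/\tB}+\tD_i$ is nef over $\tB$. Nefness is not open under perturbation of the boundary, so an arbitrary rational approximation of $D$ need not satisfy this, and ``convex stability of semiampleness'' only goes the wrong direction (a convex combination of semiample divisors is semiample, not conversely). The paper handles this via Lemma~\ref{lem:semiample reduce to Q-coef}: one first establishes that $K_{\tX/\tB}+\tD$ is semiample over the smooth base $\tB$ (this step works for $\R$-divisors by \cite{Has19}), writes $K_{\tX/\tB}+\tD\sim_\R g^*H$ for the morphism $g$ to the ample model, and then chooses rational approximations $D_i$ (via \cite{Kol23}*{Proposition 11.47} and \cite{Kol23}*{Claim 11.43.2}) close enough that $K_{\tX/\tB}+\tD_i\sim_\Q g^*H_i$ with $H_i$ still ample. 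This guarantees that each $K_{\tX/\tB}+\tD_i$ is semiample over $\tB$ (hence nef) with the \emph{same} ample model as $K_{\tX/\tB}+\tD$. The latter point is also needed for the second conclusion: your $\Proj$ description of the ample model and its base-change compatibility only makes literal sense for $\Q$-divisors, and one must know that the ample models of the $D_i$ all coincide with that of $D$ to deduce the $\R$-statement from the $\Q$-case.
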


\begin{proof}
We may assume that every irreducible component of $\tB$ dominates an irreducible component of $B$. In particular, every generic fiber of $\tf$ is normal and has a good minimal model. After replacing $\tB$ by a resolution of singularities, we may further assume that $\tB$ is smooth. By the same argument as in the proof of Lemma \ref{lem:h^0 constant smooth base}, we deduce that $K_{\tX/\tB}+\tD$ is semiample over $\tB$. By Lemma \ref{lem:semiample reduce to Q-coef}, we may write $D$ as a convex combination $D=\sum c_i D_i$ for some $c_i > 0$, $\sum_i c_i =1$ and some effective $\Q$-divisors $D_i$ such that for each $i$, $(X,D_i)\to B$ is locally stable, $K_{\tX/\tB}+\tD_i$ is semiample over $\tB$, and the ample model of $K_{\tX/\tB}+\tD_i$ over $\tB$ coincides with that of $K_{\tX/\tB}+\tD$. If the lemma holds for all the families $(X,D_i)\to B$, then it also holds for $(X,D)\to B$. Therefore, in order to prove the lemma we may assume that $D$ has rational coefficients.

In particular, we may choose a positive integer $r$ such that $r(K_{\tX/\tB}+\tD)$ is Cartier and base point free over $\tB$. It follows that $r(K_{X_b}+D_b)$ is Cartier and base point free for any $b\in B$, thus the semiampleness of $K_{X/B}+D$ over $B$ follows if we can show that the formation of $f_*\cO_X(m(K_{X/B}+D))$ commutes with base change for any sufficiently divisible positive integer $m$. Since $B$ is reduced, by cohomology and base change it suffices to show that $h^0(X_b, \cO_{X_b}(m(K_{X_b}+D_b)))$ is locally constant with respect to $b\in B$ (see e.g. \cite{Har77}*{Proof of Corollary 12.9}). Since the family $(X,D)\to B$ is locally stable, this condition can be checked after the base change via the proper surjective morphism $\tB\to B$, hence the result follows from Lemma \ref{lem:h^0 constant smooth base}.
\end{proof}

The following result is used in the above proof.

\begin{lemma} \label{lem:semiample reduce to Q-coef}
Let $f\colon (X,D)\to B$ be a locally stable family over a reduced scheme. Assume that $K_{X/B}+D$ is semiample over $B$. Then we can write $D=\sum_i c_i D_i$ as a convex combination for some $c_i > 0$, $\sum_i c_i =1$ and some effective $\Q$-divisors $D_i$ such that for each $i$, the family $(X,D_i)\to B$ is locally stable, $K_{X/B}+D_i$ is semiample over $B$, and the ample model of $K_{X/B}+D_i$ over $B$ coincides with that of $K_{X/B}+D$.
\end{lemma}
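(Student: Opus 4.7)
My plan is to parametrize boundary divisors with support in $\Supp(D)$ by a coefficient vector, identify a rational polytope of admissible coefficients on which local stability and the ample model are preserved, and write the given vector $\va$ as a convex combination of rational points in that polytope. Write $D = \sum_{i=1}^n a_i P_i$ where $P_i$ are the distinct prime components (so $a_i > 0$) and set $D_\vb := \sum_i b_i P_i$ for $\vb \in \R^n$.

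Fix the ample model $\pi\colon X \to Y$ of $K_{X/B}+D$ over $B$ together with $H$ on $Y$ ample over $B$ satisfying $K_{X/B}+D \simover{B} \pi^* H$. I would consider the affine subspace $V \subset \R^n$ of those $\vb$ such that $K_{X/B}+D_\vb$ is $\R$-Cartier and $\R$-linearly equivalent over $B$ to a pullback from $Y$ via $\pi$, and argue that $V$ is a rational affine subspace containing $\va$. The $\R$-Cartier condition is rational because, tensoring the exact sequence $0 \to \Pic(X) \to \mathrm{Cl}(X) \to \mathrm{Cl}(X)/\Pic(X) \to 0$ with $\R$ over the flat ring map $\Q \to \R$ shows that a class in $\mathrm{Cl}(X)_\Q$ is $\R$-Cartier iff it is $\Q$-Cartier, so the $\R$-Cartier locus has a rational structure. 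The descent condition pulls back the rational subspace $\pi^* N^1(Y/B)_\R \subset N^1(X/B)_\R$ to a rational condition on $\vb$; the lift from numerical to $\R$-linear equivalence is handled by working with the explicit identity $K_{X/B}+D = \pi^* H + \sum_\ell r_\ell (f_\ell)$ modulo pullbacks from $B$ (where $r_\ell \in \R$ and $f_\ell \in k(X)^\times$) and rationally perturbing $H$ and the $r_\ell$ along with $\vb$.

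Inside $V$ I would impose $\vb \ge 0$ and local stability of $(X, D_\vb) \to B$, obtaining a rational polytope $Q$ containing $\va$ from the rationality of admissible coefficient regions for locally stable families, as developed in \cite{Kol23}. The descent $H_\vb$ determined by $K_{X/B}+D_\vb \simover{B} \pi^* H_\vb$ satisfies $H_\va = H$ ample, so by openness of ampleness there is a relatively open neighborhood $Q_0 \subseteq Q$ of $\va$ on which $H_\vb$ remains ample. Finally I would pick a small rational simplex $\Delta \subset Q_0$ with $\va$ in its relative interior, write $\va = \sum_j c_j \vb^{(j)}$ as a convex combination of the rational vertices of $\Delta$ with $c_j > 0$ and $\sum_j c_j = 1$, and set $D_j := D_{\vb^{(j)}}$. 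Each $D_j$ is then an effective $\Q$-divisor, $(X, D_j) \to B$ is locally stable, and $K_{X/B}+D_j \simover{B} \pi^* H_{\vb^{(j)}}$ is semiample with ample model $\pi$, as required.

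The main obstacle I anticipate is the rational structure of the descent condition, specifically ensuring that $\R$-linear equivalence to a pullback from $Y$ (as opposed to mere numerical equivalence) cuts out a rationally defined affine subspace. The rational polytope structure of the locally stable region and the openness of ampleness are by-now-standard inputs that follow from the moduli theory of stable pairs together with elementary convex geometry.
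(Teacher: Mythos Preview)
Your approach is essentially the same as the paper's, and the obstacle you anticipate is exactly the one the paper resolves by citation rather than by hand. Both arguments rest on (a) the rational polytope structure of locally stable coefficient vectors, (b) the rationality of the condition ``$K_{X/B}+D_\vb$ is $\R$-linearly equivalent over $B$ to a pullback from $Y$'', and (c) openness of ampleness.

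The paper's execution is more streamlined. For (a) it invokes \cite{Kol23}*{Proposition 11.47} directly, which already produces convex $\Q$-approximations $D=\sum_i c_i^n D_i^n$ with each $(X,D_i^n)\to B$ locally stable and $\norm{D-D_i^n}\to 0$; one may moreover take the $c_i^n$ to be $\Q$-linearly independent. For (b) the paper then applies \cite{Kol23}*{Claim 11.43.2}: from the single relation $K_{X/B}+D\sim_\R g^*H$ and the $\Q$-linear independence of the $c_i^n$, decomposing along a $\Q$-basis of the coefficient field yields $K_{X/B}+D_i^n\sim_\Q g^*H_i^n$ for each $i$ individually. Step (c) is then just $H_i^n\to H$. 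Your sketch ``rationally perturbing $H$ and the $r_\ell$ along with $\vb$'' is this same $\Q$-basis decomposition; the paper's order of operations (first get locally stable $\Q$-approximations, then deduce descent from linear independence) avoids having to name the affine subspace $V$ or argue about its global rational structure.

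One caution: your $\Pic/\mathrm{Cl}$ argument for rationality of the $\R$-Cartier locus tacitly assumes $X$ is normal, which need not hold here since $B$ is only reduced and the fibers are demi-normal. The correct statement in this generality is precisely what Koll\'ar's results in \cite{Kol23}*{11.43 and 11.47} supply, so it is cleaner to cite them than to rebuild the argument.
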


\begin{proof}
Let $g\colon X\to Y$ be the ample model of $K_{X/B}+D$ over $B$. By \cite{Kol23}*{Proposition 11.47}, there exist convex approximations $D=\sum c_i^n D_i^n$ ($n=1,2,\dots$) where $c_i^n > 0$, $\sum c_i^n = 1$, each $D_i^n$ is an effective $\Q$-divisor such that $(X,D_i^n)\to B$ is locally stable, and $\norm{D-D_i^n}\to 0$ as $n\to \infty$ (choose any norm on the vector space spanned by the components of $D$). Moreover, for each fixed $n$, one may also make the coefficients $c_i^n$ linearly independent over $\Q$. Since $K_{X/B}+D\sim_\R g^*H$ for some $\R$-Cartier $\R$-divisor $H$ that is ample over $B$, by \cite{Kol23}*{Claim 11.43.2} we first see that $K_{X/B}+D_i^n\sim_\Q g^*H_i^n$ for some $\Q$-Cartier $\Q$-divisor $H_i^n$ on $Y$; as $n\to \infty$, we then have $\norm{H-H_i^n}\to 0$, and thus $H_i^n$ is ample over $B$ for sufficiently large $n$. In particular $Y$ is also the ample model of $K_{X/B}+D_i^n$ over $B$, and the lemma holds for $c_i := c_i^n$ and $D_i := D_i^n$ by taking some sufficiently large $n$.
\end{proof}

We next need to show that flips descend. The main ingredient is the following lemma.

\begin{lemma} \label{lem:small Q-Cartier modification over normal base}
Let $f\colon (X,D)\to B$ be a locally stable family over a normal variety with normal generic fiber. Then for any effective $\R$-divisor $\Gamma$ supported on $D$, there exists a small birational projective morphism $\pi\colon Y\to X$ such that:
\begin{enumerate}
    \item $(Y,D_Y)\to B$ is locally stable with normal generic fiber, and
    \item $-\Gamma_Y$ is $\R$-Cartier and $\pi$-ample.
\end{enumerate}
\end{lemma}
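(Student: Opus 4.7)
The strategy is to reduce to Lemma \ref{lem:small Q-Cartier modification} by base changing to a resolution of $B$, produce the small modification there, and then descend. Let $\sigma\colon \tilde B \to B$ be a resolution of singularities and form $\tilde X := X\times_B \tilde B$, so that $\tilde f\colon (\tilde X, \tilde D)\to \tilde B$ is a locally stable family over a smooth base. One first checks that $\tilde X$ is normal: it is $S_2$ (flat over a smooth base with $S_2$ fibers), and regular in codimension one (the generic fiber inherits normality from that of $f$, while any node of a demi-normal fiber over a codimension-one point of $\tilde B$ lies in codimension two of $\tilde X$). By Theorem \ref{thm:locally stable smooth base}, the pair $(\tilde X, \tilde D)$ is log canonical. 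Applying Lemma \ref{lem:small Q-Cartier modification} to $(\tilde X, \tilde D)$ and the strict transform $\tilde \Gamma$ of $\Gamma$ produces a small birational projective morphism $\tilde \pi\colon \tilde Y\to \tilde X$ with $-\tilde\Gamma_{\tilde Y}$ $\R$-Cartier and $\tilde\pi$-ample.

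Next I would descend $\tilde Y$ to a small birational projective morphism $\pi\colon Y\to X$. The key geometric input is that $\sigma$ is an isomorphism in codimension one (since $B$ is normal), so $\tilde X\to X$ is an isomorphism over a big open $U\subset X$ (namely $f^{-1}$ of the smooth locus of $B$, whose complement in $X$ has codimension at least two by flatness of $f$), and hence $\tilde Y|_{\sigma^{-1}(U)}$ is already a small birational projective modification of $U$. One then defines $Y$ either as the ample model of $-\tilde\Gamma_{\tilde Y}$ over $X$ (by an MMP/ample-model argument analogous to the one in Lemma \ref{lem:small Q-Cartier modification} but carried out relatively over $X$ rather than over $\tilde X$), or equivalently as $\Proj_X$ of the reflexive graded algebra $\bigoplus_{m\ge 0} \cO_X([-m\varepsilon\Gamma])$ obtained as the reflexive pushforward of the analogous algebra on $\tilde X$; the two constructions yield the same $Y$ and satisfy $\tilde Y\cong Y\times_X\tilde X$ on the big open $U$. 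By the smallness of $\tilde \pi$ and the construction, $\pi$ is then itself small with $-\Gamma_Y$ $\R$-Cartier and $\pi$-ample, which gives (2).

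Finally, to establish local stability in (1): smallness of $\pi$ implies $K_{Y/B}+D_Y = \pi^*(K_{X/B}+D)$ is $\R$-Cartier. The generic fiber of $Y\to B$ equals that of $X\to B$, hence is normal. To verify that each fiber $(Y_b, D_{Y,b})$ is slc, one reduces (via base change and the identification $\tilde Y\cong Y\times_X\tilde X$ over $U$) to local stability of $(\tilde Y, \tilde D_{\tilde Y})\to \tilde B$ over the smooth base $\tilde B$, which follows from smallness of $\tilde\pi$ together with Theorem \ref{thm:locally stable smooth base}. The main obstacle is the descent step: one must ensure that the ample model construction produces a modification of $X$ rather than merely of $\tilde X$, equivalently that the relevant graded algebra on $X$ is finitely generated and its $\Proj$ does not extract any $\sigma$-exceptional divisor pulled back to $\tilde X$. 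A complementary subtlety is checking that $D_Y$ remains a relative Mumford divisor in the sense of Definition \ref{defi:relative Mumford divisors}, which requires verifying equidimensionality, the Mumford condition, and the generic Cartier condition across the small modification $\pi$.
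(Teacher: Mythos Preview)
Your setup---base changing to a resolution $\tilde B$, observing that $(\tilde X,\tilde D)$ is log canonical over the smooth base, and invoking Lemma~\ref{lem:small Q-Cartier modification} to produce $\tilde\pi\colon\tilde Y\to\tilde X$---matches the paper exactly. The gap is precisely where you flag it: the descent. Neither of your two proposed constructions works without further input. Taking $\Proj_X\bigoplus_m\cO_X(\lfloor -m\varepsilon\Gamma\rfloor)$ presupposes finite generation of this algebra, but since $B$ may be badly singular the pair $(X,D)$ need not be log canonical, and there is no MMP or finite-generation result available directly on $X$; this is exactly the content of the lemma you are trying to prove. The alternative ``ample model of $-\tilde\Gamma_{\tilde Y}$ over $X$'' runs into the same wall: to run an MMP over $X$ you would need a log canonical pair over $X$ with the right numerics, but $K_{\tilde X}+\tilde D$ is not $\R$-linearly trivial over $X$ (only $K_{\tilde X/\tilde B}+\tilde D$ is pulled back from $X$, and the discrepancy $\tilde f^*K_{\tilde B}$ is nontrivial along the $\sigma$-exceptional fibers), so Theorem~\ref{thm:birkar gmm} does not apply. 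Your check of local stability in (1) also depends on knowing $\tilde Y\cong Y\times_B\tilde B$ globally, not just over the big open $U$; the fibers you most need to verify slc for are exactly those lying over $\Sing(B)$, outside $U$.

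The paper sidesteps finite generation entirely by descending through the moduli of stable pairs. Concretely: add a sufficiently ample general relative hyperplane $H$ on $X$ so that $(\tilde Y,\tilde D_{\tilde Y}-\varepsilon\tilde\Gamma_{\tilde Y}+\tilde H_{\tilde Y})\to\tilde B$ becomes a \emph{stable} family; then prove this stable family is trivial along each connected fiber $\sigma^{-1}(b)$. Triviality follows from a separate lemma (Lemma~\ref{lem:family trivial}): the induced map $\tilde Y|_{\sigma^{-1}(b)}\to X_b\times\sigma^{-1}(b)$ is crepant with exceptional locus in $\Supp(\tilde D)$ and $-\tilde\Gamma$ relatively ample, so the exceptional divisors are determined by log-canonical-place combinatorics of the fixed fiber $(X_b,D_b)$ and cannot vary. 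Once triviality is known, Lemma~\ref{lem:descend stable family} (representability of the moduli functor) furnishes the descended family $(Y,\ldots)\to B$, and local stability of $(Y,D_Y)\to B$ then follows fiberwise because every fiber of $Y\to B$ is already a fiber of $\tilde Y\to\tilde B$. This moduli-theoretic descent is the key idea you are missing.
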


\begin{rem}
    We remark that this is the only place in the MMP where we need the base $B$ to be normal; in fact, the examples in \cite{ABIP23}*{Section 8.1} can be used to show that flips do not descend if the base is only reduced (or even seminormal). On the other hand, we expect the lemma to hold when $X\to B$ is only quasi-projective, although we do not have a proof. 
\end{rem}

\begin{proof}
First note that the small birational modification $\pi$, if exists, is uniquely determined by the given conditions through a Proj construction, thus the question is of local nature and we may freely shrink $B$ as needed. Thus we may assume that $B$ is quasi-projective. Let $\varphi\colon B'\to B$ be a resolution of singularities and let $(X',D')\to B'$ denote the base change of the family $(X,D)\to B$. Let $\Gamma'$ be the (Weil-divisor) pullback of $\Gamma$ (in the sense of \cite{Kol23}*{Definition 4.2.7}).

Since $B'$ is smooth, the pair $(X',D')$ is log canonical by Theorem \ref{thm:locally stable smooth base}. Thus by Lemma \ref{lem:small Q-Cartier modification}, there exists a small birational projective morphism $\pi'\colon Y'\to X'$ such that $-\Gamma'_{Y'}$ is $\R$-Cartier and $\pi'$-ample. Since
\begin{equation} \label{eq:pullback by a small map}
    K_{Y'}+D'_{Y'} = \pi'^*(K_{X'}+D'),
\end{equation}
the family $(Y',D'_{Y'})\to B'$ is also locally stable by another application of Theorem \ref{thm:locally stable smooth base}. The plan is to descend the family $(Y',D'_{Y'})\to B'$ to the original base $B$.

Choose some $0<\varepsilon\ll 1$ and some general relative hyperplane section $H$ on $X$ that is sufficiently ample over $B$. Let $\Delta=D-\varepsilon \Gamma\ge 0$ and let $H'$ (resp. $\Delta'$) be the pullback of $H$ (resp. $\Delta$) to $X'$. Then it is not hard to see that
\[K_{Y'/B'}+\Delta'_{Y'}+H'_{Y'} = \pi'^*(K_{X'/B'}+D'+H') - \varepsilon \Gamma'_{Y'}\]
is ample over $B'$ and by Bertini theorem we may also assume that $(Y',\Delta'_{Y'}+H'_{Y'})\to B'$ is a stable family (i.e. the fibers are slc). If we can show that for any closed point $b\in B$, the induced stable family over $\varphi^{-1}(b)\subseteq B'$ is a trivial family (as before we treat $\varphi^{-1}(b)$ as a reduced scheme), then by Lemma \ref{lem:descend stable family} the family $(Y',\Delta'_{Y'}+H'_{Y'})\to B'$ descends to a stable family $(Y,\Delta_Y+H_Y)\to B$ making the diagram commutative with Cartesian squares:
\[
\xymatrix{
(Y',\Delta'_{Y'}+H'_{Y'})\ar[r] \ar[d]_{\pi'} & (Y,\Delta_Y+H_Y) \ar[d]^{\pi} \\
(X',D'+H') \ar[r] \ar[d]_{f'} & (X,D+H) \ar[d]^{f} \\
B' \ar[r]^{\varphi} & B.
}
\]
The induced map $\pi\colon Y\to X$ is then the small birational modification we sought after. Note that $(Y,D_Y)\to B$ is automatically locally stable once we know its existence. Indeed, the morphism $Y\to B$ is flat by construction, $K_{Y/B}+D_Y$ is $\R$-Cartier as it is the pullback of $K_{X/B}+D$, and over a normal base the relative Mumford divisor condition can be checked on the fibers (see the remark after Definition \ref{defi:relative Mumford divisors}). Moreover, the fibers of $(Y,D_Y)\to B$ are also fibers of $(Y',D'_{Y'})\to B'$, hence they are slc. Hence $(Y,D_Y)\to B$ is locally stable.

Let us now show that $(Y',\Delta'_{Y'}+H'_{Y'})\times_{B'} \varphi^{-1}(b)$ is a trivial family over $\varphi^{-1}(b)$. In fact, it suffices to show that $(Y',D'_{Y'})\times_{B'} \varphi^{-1}(b)$ is a trivial family, since then $\Delta'_{Y'}$ (supported on $D'_{Y'}$) and $H'_{Y'}$ (itself a pullback from $X$) cannot have any variation over $\varphi^{-1}(b)$. 

To simplify the notation we denote $W:=\varphi^{-1}(b)$ and $(Z,G):=(Y',D'_{Y'})\times_{B'} \varphi^{-1}(b)$. The restriction of the morphism $\pi'\colon Y'\to X'$ to $Z\subseteq Y'$ induces a morphism
\[\psi\colon Z\to X'\times_{B'}\varphi^{-1}(b)\cong X_b\times W.\]
Since $-\Gamma'_{Y'}$ is $\pi'$-ample, we deduce that
\[\Exc(\pi')\subseteq \Supp(\Gamma'_{Y'})\subseteq \Supp(D'_{Y'})\]
and hence $\Exc(\psi)\subseteq \Supp(G)$. By restricting \eqref{eq:pullback by a small map} to $Z$ we also see that $(Z,G)$ is the crepant pullback of $(X_b,D_b)\times W$. Note that $\psi$ has connected fibers since $\pi'$ does; moreover, $W$ is connected since $B$ is normal. We may now conclude that the family $(Z,G)\to W$ is trivial by the following Lemma \ref{lem:family trivial}.
\end{proof}

\begin{lemma} \label{lem:family trivial}
Consider a birational morphism with connected fibers between locally stable families over a connected reduced scheme:
\[
\xymatrix{
(Z,G)\ar[rr]^{\psi} \ar[rd] & & (X,D) \ar[ld] \\
 & B &
}
\]
Assume that:
\begin{enumerate}
    \item $\Exc(\psi)\subseteq \Supp(G)$, 
    \item $\psi$ is crepant, i.e. $K_{Z/B}+G = \psi^*(K_{X/B}+D)$,
    \item $-G_0$ is $\psi$-ample for some effective $\R$-Cartier $\R$-divisor $G_0$ supported on $G$, and
    \item $(X,D)\to B$ is a trivial family.
\end{enumerate}
Then $(Z,G)\to B$ is a trivial family.
\end{lemma}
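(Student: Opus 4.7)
The plan is to identify $Z$ with the base change by $B$ of a canonical birational modification $\sigma_0\colon Z_0\to X_0$, exploiting the product structure of $X$ and a Proj-style uniqueness statement. Using triviality of $(X,D)\to B$, write $X = X_0\times B$ with first projection $p_1\colon X\to X_0$ and $D = D_0\times B$. Fix $b_0\in B$ and set $(Z_0,\Gamma_0) := (Z_{b_0}, G_{b_0})$; then $\sigma_0 := \psi|_{Z_0}\colon Z_0\to X_0$ is a crepant birational morphism with $\Exc(\sigma_0)\subseteq\Supp(\Gamma_0)$ and $-G_0|_{Z_0}$ is $\sigma_0$-ample.

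\textbf{Step 1 (uniqueness of the modification over $X_0$).} The key claim is that, up to unique isomorphism over $X_0$, the pair $(Z_0,\sigma_0)$ is determined by $(X_0,D_0)$ together with the pushforward data of $G_0|_{Z_0}$ to $X_0$. In the $\bQ$-Cartier small case, $\sigma_0$ is characterized as the Proj of the intrinsically-defined graded $\cO_{X_0}$-algebra $\bigoplus_m\cO_{X_0}(-m\Delta_0)^{**}$ where $\Delta_0 := \sigma_{0,*}G_0|_{Z_0}$. The $\R$-Cartier case reduces to this by perturbing the ample class within its open ample chamber; the non-small case reduces via a log resolution of $(X_0,D_0)$ and an appropriate MMP over $X_0$.

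\textbf{Step 2 (descend to a product).} The base change $\sigma_0\times\id_B\colon Z_0\times B\to X = X_0\times B$ satisfies the same hypotheses as $\psi\colon Z\to X$: the product structure of $(X,D)\to B$ propagates crepancy, the exceptional-locus inclusion, and the relative ampleness of the $\R$-Cartier divisor $-(G_0|_{Z_0})\boxtimes\cO_B$. Fiberwise, the uniqueness of Step 1 applies: for each $b\in B$, both $(Z_b,\psi_b)$ and $(Z_0,\sigma_0)$ are modifications of $X_0$ with the same $\R$-Cartier data (by flatness of $Z\to B$), so $Z_b\cong Z_0$ canonically over $X_0$. Gluing these canonical isomorphisms via the universal property yields $Z\cong Z_0\times B$ as $X$-schemes. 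Finally, the crepancy relation $K_{Z/B}+G = \psi^*(K_{X/B}+D) = p_1^*(K_{X_0}+D_0)$ forces $G = \Gamma_0\times B$, so $(Z,G)\to B\cong (Z_0,\Gamma_0)\times B\to B$ is trivial.

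\textbf{Main obstacle.} The technical core is the uniqueness in Step 1, particularly the $\R$-Cartier and non-small cases. Extending the $\bQ$-Cartier small Proj argument to the $\R$-Cartier setting via chamber perturbation is plausible but requires verifying that the relevant ample chamber of $-\Delta_0$ is open and that perturbation does not alter the modification. Handling non-small $\sigma_0$ requires replacing the naive Proj construction by an MMP-theoretic analogue. A secondary subtlety is gluing the fiberwise canonical isomorphisms in Step 2 into a global isomorphism, which relies on the functoriality of the universal property of the modification under base change; the essential input here is that the class of $G_0$ is a global $\R$-Cartier datum on $Z$, hence its restriction to fibers is rigid as $b$ varies.
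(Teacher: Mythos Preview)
Your overall architecture matches the paper's---characterize the modification intrinsically over $X_0$, then compare fiberwise---but Step 1 has a real gap in the non-small case, and you also skip a reduction to the normal case.

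On the reduction: your Proj description of $\sigma_0$ tacitly assumes $X_0$ and $Z_0$ are normal, which they need not be in the slc setting. The paper handles this via Koll\'ar's gluing theory: $(Z,G)$ is determined by its normalization together with the involution on the conductor, and assumption (1) forces $\Exc(\psi)$ to miss every generic point of the double locus of each fiber, so the gluing data is inherited from $(X,D)$ and hence constant. This reduces the problem to the normal case.

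On the main gap: your uniqueness claim in Step 1 is false as stated when $\sigma_0$ is not small. The pushforward $\Delta_0 = \sigma_{0,*}(G_0|_{Z_0})$ annihilates every $\sigma_0$-exceptional component of $G_0$, so it cannot remember which divisorial valuations $\sigma_0$ extracts, and different extraction choices give genuinely different modifications with identical $\Delta_0$. What actually determines $Z_0$ via a Proj construction is the set of extracted valuations together with their $G_0$-coefficients (this is the paper's second bullet). To run Step 2 you must therefore show that this valuation data does not vary with $b\in B$, and here is the idea you are missing: assumptions (1) and (2) force every $\psi$-exceptional divisor to have positive coefficient in $G$, hence negative discrepancy over $(X_0,D_0)$; divisors of negative discrepancy over a fixed lc pair form a discrete set, obtained only by iterated blowups of strata of a fixed log resolution (\cite{ABIP23}*{Lemma 6.6}), so over a connected base the extracted set is rigid. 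Your phrase ``log resolution and an appropriate MMP over $X_0$'' does not supply this rigidity, and flatness of $Z\to B$ only pins down the coefficient along each global component of $G_0$, not the identification of those components with valuations on $X_0$.
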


\begin{proof} 
The lemma is then proved just as in \cite{ABIP23}*{Proposition 6.4 and Lemma 6.6}. The main points are:
\begin{itemize}
    \item Koll\'ar's gluing theory \cite{Kol13}*{Proposition 5.3} ensures that $(Z,G)$ is determined by its normalization together with the gluing relation induced by the involution on the conductor. If the normalization is a trivial family, then as the exceptional locus of $\psi$ does not contain any generic point of the non-normal locus of any fiber (by assumption (1)), the gluing relation is just induced by that of $(X,D)$, hence is also constant in the family and it follows that $(Z,G)\to B$ is also necessarily a trivial family. So we may reduce to the case when $Z$ and $X$ are normal.
    \item Assumption (3) implies that $Z$ is determined, through a Proj construction, by the exceptional divisors extracted by $\psi$ and the coefficients of $G_0$.
    \item Assumptions (1) and (2) imply that the $\psi$-exceptional divisors have negative discrepancies with respect to $(X,D)$ (these can be defined e.g. over the generic points of $B$). The only way to get such divisors is by successively blowing up strata of a log resolution of $(X,D)$ (\cite{ABIP23}*{Lemma 6.6}). If $(X,D)\to B$ is a trivial family and the base is connected, then such $\psi$-exceptional divisors have no variation in the family. Thus the family $(Z,G)\to B$ is also trivial.
\end{itemize}
\end{proof}

The last ingredient we need is the fact that the family remains locally stable after each MMP step. We already see this for flips (Lemma \ref{lem:small Q-Cartier modification over normal base}), so it remains to check the case of extremal contractions. We shall prove a slightly more general statement, since it is also useful in Section \ref{sec:wall crossing} when we study the wall-crossing morphisms.

\begin{lemma} \label{lem:ample model stable}
Let $f\colon (X,D)\to B$ be a locally stable family over a reduced scheme with normal generic fibers. Assume that $K_{X/B}+D$ is semiample and big over $B$ and let $\pi\colon X\to Y$ be its ample model over $B$. Then $g\colon(Y,D_Y:=\pi_*D)\to B$ is stable.
\end{lemma}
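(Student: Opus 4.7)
The ampleness of $K_{Y/B}+D_Y$ over $B$ is immediate. Since $K_{X/B}+D$ is semiample over $B$ with ample model $\pi$, we have $K_{X/B}+D\sim_{\R,B}\pi^*H$ for some $\R$-Cartier divisor $H$ on $Y$ that is ample over $B$, and pushing forward along the birational morphism $\pi$ gives $K_{Y/B}+D_Y\sim_{\R,B}H$. It therefore remains to prove that $g\colon(Y,D_Y)\to B$ is locally stable.

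My plan is to first establish the statement after base change to a resolution $\varphi\colon\tB\to B$ and then descend. The base change $(\tX,\tD)\to\tB$ is again locally stable with normal generic fiber, and the hypotheses of Lemma \ref{lem:ample model commutes with base change} are satisfied — the generic fibers have good minimal models because $K+D$ is semiample and big on them, so each admits its log canonical model. We thus obtain $\tY\cong Y\times_B\tB$, where $\tY$ is the ample model of $K_{\tX/\tB}+\tD$ over $\tB$. Moreover, the proof of that lemma, together with the convex-combination reduction of Lemma \ref{lem:semiample reduce to Q-coef}, shows that $f_*\cO_X(m(K_{X/B}+D))$ has locally constant fiber dimension on the reduced base $B$ for every sufficiently divisible $m$, hence is locally free on $B$.

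Over the smooth base $\tB$, I expect $(\tY,\tD_{\tY})\to\tB$ to be a stable family. Flatness of $\tY\to\tB$ and reducedness of its fibers follow from the canonical-bundle-formula and generalized-pair analysis already carried out in the proof of Lemma \ref{lem:h^0 constant smooth base}, together with the flatness criterion \cite{Kol23}*{Lemma 10.58}. The fibers of $\tY\to\tB$ are slc, because the ample model of an slc pair whose log canonical divisor is semiample and big remains slc (pass to the normalization, where this is standard, and then reglue using the induced involution on the conductor). The relative Mumford condition for $\tD_{\tY}$ is inherited from that of $\tD$, since no irreducible component of $\tD_{\tY}$ lies in the exceptional locus of $\tilde{\pi}$, so $\tilde{\pi}$ is an isomorphism at the generic points of these components in the fibers. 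Finally, for any SNC divisor $\Gamma$ on $\tB$, the crepant identity $K_{\tX/\tB}+\tD+\tf^*\Gamma=\tilde{\pi}^*(K_{\tY/\tB}+\tD_{\tY}+g^*\Gamma)$ combined with Theorem \ref{thm:locally stable smooth base} gives log canonicity of $(\tY,\tD_{\tY}+g^*\Gamma)$; together with the flatness and Mumford conditions just verified, this upgrades to local stability via the same theorem, and $K_{\tY/\tB}+\tD_{\tY}$ being ample then yields stability.

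To descend to $B$, I would write $Y=\Proj_B\bigoplus_{m\ge 0}f_*\cO_X(mr(K_{X/B}+D))$ for $r$ with $r(K_{X/B}+D)$ Cartier (reducing to the $\Q$-coefficient case via Lemma \ref{lem:semiample reduce to Q-coef}). The local freeness of each graded piece on $B$ established above implies that $g\colon Y\to B$ is flat; its fibers are then identified with those of $\tY\to\tB$ via the base change, hence are demi-normal, connected and slc, and $D_Y$ is relative Mumford. The main obstacle is the smooth-base step: extracting flatness and the Mumford condition for $\tY\to\tB$ requires the canonical-bundle-formula and generalized-pair machinery of Lemma \ref{lem:h^0 constant smooth base}, while the subsequent descent to the reduced base $B$ is a fairly clean application of the Proj construction and Lemma \ref{lem:ample model commutes with base change}.
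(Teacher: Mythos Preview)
Your overall strategy---verify stability after base change to a smooth $\tB$ and then descend via the Proj construction and Lemma~\ref{lem:ample model commutes with base change}---is reasonable, and several pieces (flatness of $Y\to B$, identification of fibers, ampleness of $K_{Y/B}+D_Y$) are fine. The genuine gap is in the verification of the relative Mumford divisor conditions for $D_Y$.

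First, your argument for the Mumford condition on $\tD_{\tY}$ does not work. You claim that since no component of $\tD_{\tY}$ lies in $\Exc(\tilde\pi)$, the map $\tilde\pi$ is an isomorphism at the generic points of $\tD_{\tY}\cap\tY_b$. But the exceptional locus avoids only the \emph{generic} point of each horizontal component of $\tD_{\tY}$; over a special fiber $\tY_b$, the image of $\Exc(\tilde\pi)$ may well contain an entire component of $\Supp(\tD_{\tY})\cap\tY_b$, and there $\tilde\pi$ is not an isomorphism, so smoothness cannot be inherited from $\tX$. Your alternative route through slc-ness of fibers is circular: Theorem~\ref{thm:locally stable smooth base} (which you invoke via the crepant identity $K_{\tX}+\tD+\tf^*\Gamma=\tilde\pi^*(K_{\tY}+\tD_{\tY}+g^*\Gamma)$) applies only to families of pairs, i.e.\ it presupposes that $\tD_{\tY}$ is already relative Mumford.

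Second, even granting the Mumford condition over $\tB$, the descent to the reduced base $B$ misses the generic Cartier condition (Definition~\ref{defi:relative Mumford divisors}(3)). Over a normal base this is implied by (1) and (2), but $B$ is only reduced, and this condition is not fiberwise, so it does not automatically descend from $\tB$.

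The paper handles both issues at once by a different device: using \cite{Kol23}*{Proposition 4.26} and Bertini, it cuts by general relative hyperplane sections to reduce the Mumford and generic Cartier conditions to the case of relative dimension one, where they follow from Hassett's result \cite{Has03}*{Theorem 3.6} over an arbitrary reduced base. Only after $(Y,D_Y)\to B$ is known to be a family of marked pairs does the paper pass to a resolution of $B$ and invoke \cite{Kol23}*{Corollary 4.57} for the slc condition on fibers.
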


When the base $B$ is seminormal, this is essentially proved in \cite{ABIP23}*{Theorem 5.1}. The stronger result above over a reduced base is one of the ingredients which allows us to construct the wall-crossing morphisms without passing to the seminormalization.

\begin{proof}
We need to verify the conditions in Section \ref{sec:stable family}. Since the question is local on $B$ we may freely shrink $B$ as needed. Thus we may assume that $B$ is connected.

The flatness of $Y\to B$ and the equidimensionality of $\Supp(D_Y)\to B$ are checked just as in \cite{ABIP23}*{Theorem 5.1}. Here we briefly recall the argument for the reader's convenience. Since the formation of the ample model commutes with base change by Lemma \ref{lem:ample model commutes with base change}, the fiber $Y_b$ of $g$ is the ample model of $K_{X_b}+D_b$, which is necessarily reduced and connected. By the same argument as in the proof of Lemma \ref{lem:h^0 constant smooth base}, $g$ has geometrically reduced fibers of pure dimension $\dim Y-\dim B$. Since $f$ is flat, $f$ is universally open. Thus $g$ is universally open since $\pi$ is surjective. Thus the morphism $g$ is pure dimensional by \cite{Kol23}*{Claim 2.71.1}. Since $g$ is pure dimensional and has geometrically reduced fibers, $g$ is flat by \cite{Kol23}*{Lemma 10.58}. Since $K_{X/B}+D$ is big over $B$, the morphism $\pi$ is birational and thus $\dim Y_b = d$ for all $b\in B$, where $d$ is the relative dimension of $f\colon X\to B$. If $\Delta$ is an irreducible component of $D_Y$, then it is the $\pi$-image of some irreducible component $\Gamma$ of $D$. Since $\Gamma$ dominates an irreducible component of $B$, so does $\Delta$. Moreover, $\dim \Delta_b \le \dim \Gamma_b = d-1$ with equality for general $b\in B$. Combined with the upper semicontinuity of fiber dimensions, we see that $\dim \Delta_b = \dim \Gamma_b = d-1$ for all $b\in B$. This gives the equidimensionality condition. 

We proceed to check the Mumford and the generic Cartier conditions (Definition \ref{defi:relative Mumford divisors}). By \cite{Kol23}*{Proposition 4.26}, these can be checked on a general relative hyperplane section on $Y$. By Bertini theorem and \cite{Kol23}*{Proposition 10.56}, we also know that the restriction of $(X,D)$ to the general member of a relative base point free linear system (e.g. the pullback of a general relative hyperplane section on $Y$) remains locally stable over $B$. Thus we may assume that $X\to B$ has relative dimension $1$. In particular, each fiber $X_b$ is a nodal curve and $\Supp(D_b)$ is contained in its smooth locus. By \cite{Has03}*{Theorem 3.6}, the ample model of $K_{X/B}+D$ over $B$ is a stable family. In particular, it satisfies the Mumford and the generic Cartier conditions.

To this end, we conclude that $(Y,D_Y)\to B$ is a family of marked pairs (in the sense of Definition \ref{defi:families of marked pairs}). It remains to check that it is a stable family (Definition \ref{defi:stable families}). By construction, we have $K_{X/B}+D = \pi^*(K_{Y/B}+D_Y)$ and $K_{Y/B}+D_Y$ is $\R$-Cartier. Since the formation of the ample model commutes with base change by Lemma \ref{lem:ample model commutes with base change}, the remaining conditions in Definition \ref{defi:stable families} can be checked after a base change. In particular, we may replace $B$ by a resolution of singularities and assume it is smooth. The statement then follows from \cite{Kol23}*{Corollary 4.57}.
\end{proof}

We are now in a position to run the MMP for locally stable families over a normal base.

\begin{proof}[Proof of Theorem \ref{thm:MMP locally stable family}]
Let $\tB\to B$ be a resolution of singularities and let $(\tX,\tD)$ be the locally stable family obtained from $(X,D)$ by base change to $\tB$. Our assumption guarantees that $\tX$ is normal, since the fibers of $\tX\to \tB$ are all reduced and $S_2$ while the generic fiber is normal, and these properties imply that $\tX$ is smooth in codimension one and $S_2$.

By Theorem \ref{thm:locally stable smooth base}, the pair $(\tX,\tD)$ is log canonical, hence we may run the $(K_{\tX/\tB}+\tD)$-MMP (equivalently, the $(K_{\tX}+\tD)$-MMP) over $\tB$ with scaling of $\tH$ (more details on this below), where $\tH$ is the pullback of $H$ to $\tX$. To prove the theorem, our goal is to show that every step in this MMP descends to $B$.

The steps in this (non-$\Q$-factorial) $(K_{\tX/\tB}+\tD)$-MMP with scaling over $\tB$ take the following form (see e.g. \cite{Kol21-MMP-non-Q-factorial}*{Definition 1}). Let $(\tX_i,\tD_i)$ be the birational model obtained at the $i$-th step (when $i=0$ we set $(\tX_0,\tD_0):=(\tX,\tD)$) and let $\tH_i$ be the pushforward of $\tH$ to $\tX_i$ which remains $\R$-Cartier as will be seen from the construction. Let $t_i\ge 0$ be the smallest nonnegative number such that $K_{\tX_i}+\tD_i+t_i \tH_i$ is nef over $\tB$. If $t_i=0$, the MMP terminates with a minimal model over $\tB$. Otherwise $t_i>0$ and we let $\tZ_i$ be the ample model of $K_{\tX_i}+\tD_i+t_i \tH_i$ over $\tB$, whose existence is guaranteed by \cite{Fuj11}*{Theorem 1.1(4)} (alternatively, it follows from Theorem \ref{thm:MMP scaling fiber has gmm}, since $(\tX_i,\tD_i+t_i\tH_i)$ is a minimal model of $(\tX,\tD+t_i \tH)$ over $\tB$). If $\dim \tZ_i < \dim \tX_i$, the MMP terminates with a Fano fibration over $\tB$. Otherwise, the morphism $\tvarphi_i \colon \tX_i\to \tZ_i$ is birational and the next step of the MMP factorizes as
\[
    \xymatrix{
        (\tX_i,\tD_i) \ar[r]^-{\tvarphi_i} \ar[rd]_{\tf_i} & \tZ_i \ar[d]  & (\tX_{i+1},\tD_{i+1}) \ar[l]_-{\tpsi_i} \ar[ld]^{\tf_{i+1}} \\
        & \tB &
    }
\]
where $\tpsi_i$ is a small birational projective morphism such that $-\tH_{i+1}$ is $\tpsi_i$-ample. Its existence is guaranteed by Lemma \ref{lem:small Q-Cartier modification}, since $(\tZ_i,\tvarphi_{i*}(\tD_i+t_i \tH_i))$ is log canonical for a general choice of $H\ge 0$ in its $\R$-linear equivalence class over $B$. We shall fix this choice of $H$ in the rest of the proof. In particular, since 
\begin{align*}
    K_{\tX_{i+1}}+\tD_{i+1}+(t_i-\varepsilon)\tH_{i+1} & = \tpsi_i^*\tvarphi_{i*}(K_{\tX_i}+\tD_i+t_i \tH_i) - \varepsilon \tH_{i+1} \\
    & = \tpsi_i^*(\mbox{ample}/\tB)-\varepsilon \tH_{i+1}
\end{align*}
we know that $K_{\tX_{i+1}}+\tD_{i+1}+(t_i-\varepsilon)\tH_{i+1}$ is $\tf_{i+1}$-ample for $0<\varepsilon\ll 1$ and hence $t_{i+1}<t_i$. By induction on $i$, this also implies that $K_{\tX_i}+\tD_i+t\tH_i$ is $\tf_{i}$-ample for $t_i<t<t_{i-1}$.

We need to descend this sequence of $(K_{\tX/\tB}+\tD)$-MMP with scaling over $\tB$ to $B$. By induction on $i$, we may assume that $(\tX_i,\tD_i+t_i\tH_i)\to \tB$ descends to a locally stable family $(X_i,D_i+t_iH_i)\to B$. By Lemma \ref{lem:ample model commutes with base change}, the ample model $\tvarphi_i\colon \tX_i\to \tZ_i$ descends to give the ample model $\varphi_i\colon X_i\to Z_i$ of $K_{X_i/B}+D_i+t_i H_i$ over $B$. Moreover, when $\varphi_i$ is birational, the induced family $(Z_i,\varphi_{i*}(D_i+t_i H_i))\to B$ is stable by Lemma \ref{lem:ample model stable}. By Lemma \ref{lem:small Q-Cartier modification over normal base}, there exists a small birational projective morphism $\psi_{i+1}\colon X_{i+1}\to Z_i$, which is necessarily the descent of $\tpsi_{i+1}\colon \tX_{i+1}\to \tZ_i$, such that $-H_{i+1}$ is $\psi_{i+1}$-ample and $(X_{i+1},D_{i+1}+t_i H_{i+1})\to B$ is a locally stable family, where $D_{i+1}$ (resp. $H_{i+1}$) is the pushforward of $D_i$ (resp. $H_i$). This finishes the construction of the MMP with scaling over $B$.

Finally, if the generic fiber of $(\tX,\tD)\to \tB$, which is also the generic fiber of $(X,D)\to B$, has a good minimal model, then since every log canonical center of $(\tX,\tD)$ dominates $\tB$ by \cite{Kol23}*{Corollary 4.56}, using \cite{HX13}*{Theorem 1.1} and \cite{Has19}*{Theorem 1.2} we conclude that $(\tX,\tD)$ has a good minimal model $(\tX_m,\tD_m)$ over $\tB$. From the previous discussion, we know that it descends to a minimal model $(X_m,D_m)$ of $(X,D)$ over $B$. By Lemma \ref{lem:ample model commutes with base change}, we see that $(X_m,D_m)$ is also a good minimal model over $B$. The proof is now complete.
\end{proof}

\subsection{Semi-log-canonical models}

If the locally stable family in Theorem \ref{thm:MMP locally stable family} does not have a normal generic fiber, then the MMP may not exist, since MMP does not apply to slc pairs in general (see e.g. \cites{Kol11-slc-no-MMP,AK19} for some related examples). However, if we assume that the generic fiber has a semi-log-canonical model, then the family still has a relative semi-log-canonical model. This is enough for most applications in our work.

\begin{thm} \label{thm:slc model exist}
Let $f\colon (X,D)\to B$ be a locally stable family over a normal variety and let $0\le \Delta\le D$ be an $\R$-divisor. Assume that the generic fiber of $(X,\Delta)\to B$ has a semi-log-canonical model. Then the family $(X,\Delta)\to B$ has a semi-log-canonical model.
\end{thm}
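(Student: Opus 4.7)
The plan is to reduce to the normal generic fiber case via normalization, apply Theorem \ref{thm:MMP locally stable family} to build the log canonical model on the normalization, and then glue along the conductor using Kollár's gluing theory to descend this to a semi-log-canonical model of $(X,\Delta)\to B$. Let $\pi\colon \bar X\to X$ be the normalization with conductor $\bar\Gamma\subset \bar X$, and let $\bar D$ (respectively $\bar \Delta$) denote the divisorial part of $\pi^{-1}(D)$ (respectively $\pi^{-1}(\Delta)$). Since normalization is compatible with the fibers of a locally stable family, $(\bar X,\bar D+\bar \Gamma)\to B$ is itself locally stable with normal generic fiber.

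Next, I would pass to a small modification on which $K+\bar\Delta+\bar\Gamma$ is $\R$-Cartier. The divisor $\bar D-\bar\Delta$ is effective and supported on $\bar D$, so Lemma \ref{lem:small Q-Cartier modification over normal base} yields a small birational projective morphism $\psi\colon Y\to \bar X$ such that $(Y,\bar D_Y+\bar\Gamma_Y)\to B$ remains locally stable with normal generic fiber, while $-(\bar D-\bar\Delta)_Y$ is $\R$-Cartier and $\psi$-ample. From the identity
\[
K_Y+\bar\Delta_Y+\bar\Gamma_Y \;=\; (K_Y+\bar D_Y+\bar\Gamma_Y) - (\bar D-\bar\Delta)_Y,
\]
we see that $K_Y+\bar\Delta_Y+\bar\Gamma_Y$ is $\R$-Cartier. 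Since $\psi$ is small and reducing boundary coefficients in an slc pair preserves slc, the marked family $(Y,\bar\Delta_Y+\bar\Gamma_Y)\to B$ is itself locally stable with normal generic fiber.

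Now apply Theorem \ref{thm:MMP locally stable family} to $(Y,\bar\Delta_Y+\bar\Gamma_Y)\to B$. Its generic fiber is crepant birational to the normalization of the generic fiber of $(X,\Delta)$; the latter admits a semi-log-canonical model by hypothesis, so its normalization admits a log canonical model, and hence so does the generic fiber of $(Y,\bar\Delta_Y+\bar\Gamma_Y)\to B$. The MMP therefore terminates with a good minimal model over $B$, and, by Lemma \ref{lem:ample model stable}, its ample model over $B$ is a stable family $(Z,\Theta+\Xi)\to B$ with normal generic fiber, where $\Theta$ and $\Xi$ are the pushforwards of $\bar\Delta_Y$ and $\bar\Gamma_Y$ respectively.

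Finally, the conductor involution $\tau$ on the normalization of $\bar\Gamma$ (which encodes the demi-normal structure of $X$) transports through the crepant birational map $\bar X\dashrightarrow Z$ to an involution on the normalization of $\Xi$. By Kollár's gluing theorem, this gluing datum produces a stable family $(W,\Theta_W)\to B$ whose normalization recovers $(Z,\Theta+\Xi)\to B$ and whose generic fiber is the slc model of $(X_\eta,\Delta_\eta)$; this $(W,\Theta_W)\to B$ is the desired semi-log-canonical model of $(X,\Delta)\to B$. I expect the main technical obstacle to be the final gluing step, specifically verifying that Kollár's gluing criteria (preservation of the relative different and compatibility of the involution in families) survive the MMP — i.e.\ that the conductor involution really does descend to a well-defined involution on $\Xi$ that glues $(Z,\Theta+\Xi)$ into a stable family over $B$, rather than just over each fiber.
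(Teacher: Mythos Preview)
Your overall strategy matches the paper's: normalize, build the log canonical model on the normalization, then glue. The first three steps are fine (though the local stability of the normalization is not automatic and relies on Lemma \ref{lem:normalization locally stable}, which you should cite). The problem is the last paragraph, which is where all the content lies and which you leave essentially unproved.

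First, a small but real error: the birational map $\bar X\dashrightarrow Z$ is not crepant. It is obtained by running an MMP and then taking an ample model, so it is $(K+\bar\Delta+\bar\Gamma)$-negative on the contracted locus. What you presumably mean is that the induced involution on $\bar\Gamma^\nu$ is $(K+\Diff)$-equivariant, hence induces a birational involution on the log canonical model of the conductor; but that has nothing to do with the ambient map being crepant.

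More importantly, the sentence ``transports\ldots to an involution on the normalization of $\Xi$'' hides the whole difficulty. A priori you only get a \emph{birational} self-map of $(\Xi^\nu,\Diff_{\Xi^\nu}(\Theta))$, and Koll\'ar's gluing theorems require a regular involution. The paper closes this gap in two steps that your sketch is missing. First, it reduces to the case where $B$ is smooth: over a resolution $\tB\to B$ the slc model, once constructed, is a stable family whose restriction to each exceptional fiber of $\tB\to B$ is trivial (there are only finitely many slc pairs with a fixed normalization and gluing data), so Lemma \ref{lem:descend stable family} brings it back to $B$. This reduction is not cosmetic: with $B$ smooth, Theorem \ref{thm:locally stable smooth base} makes $(Z,\Theta+\Xi)$ genuinely log canonical, which is needed both for adjunction and for \cite{Kol13}*{Corollary 5.33, Theorem 5.38}. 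Second, with $B$ smooth, adjunction shows that $(\Xi^\nu,\Diff_{\Xi^\nu}(\Theta))\to B$ is itself a stable family; the birational involution is regular over the generic point of $B$ (because the generic fiber of $(X,\Delta)$ has an slc model by hypothesis), and a birational self-map of a stable family that is an isomorphism over the generic point is an isomorphism everywhere, by the argument of \cite{HX13}*{Lemma 7.2}. Only then does Koll\'ar's gluing apply. You correctly flagged this as the main obstacle, but you still need to actually carry out these two steps.
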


To make this precise, we need to define semi-log-canonical models for the families that appear in the above statement. If the generic fiber is normal and $K_{X/B}+\Delta$ is $\R$-Cartier, then $(X,\Delta)\to B$ is locally stable and its (semi-)log canonical model is just the log canonical model defined right after Theorem \ref{thm:MMP locally stable family}. The $\R$-Cartier assumption can be dropped from the definition in a way similar to Definition \ref{defi:log canonical models}.

\begin{defi} \label{defi:lc model for families of locally stable type}
Let $(X,D)\to B$ be a locally stable family over a normal variety with normal generic fiber and let $0\le \Delta\le D$ be an $\R$-divisor such that $K_{X/B}+\Delta$ is big on the generic fiber. Suppose that $X\dashrightarrow Z$ is a birational contraction over $B$ and $(Z,\Delta_Z)\to B$ is a stable family. Then $(Z,\Delta_Z)\to B$ is called a log canonical model of $(X,\Delta)\to B$ if there exists a small birational projective morphism $Y\to X$ such that $(Y,\Delta_Y)\to B$ is locally stable with normal generic fiber and $(Z,\Delta_Z)\to B$ is the log canonical model of $(Y,\Delta_Y)\to B$.
\end{defi}

\begin{rem} \label{rem:equivalent defn of lc model for family}
Such a small birational projective morphism $Y\to X$ always exists by Lemma \ref{lem:small Q-Cartier modification over normal base}. Since a stable family is determined by its generic fiber (thanks to the separatedness of the moduli of stable pairs), by Remark \ref{rem:log canonical models are well-defined} we see that the log canonical model $(Z,\Delta_Z)\to B$ is unique if it exists. Moreover, by applying Theorem \ref{thm:MMP locally stable family} to the family $(Y,\Delta_Y)\to B$ we deduce that $(X,\Delta)\to B$ has a log canonical model if and only if its generic fiber has one. Thus we can also define the log canonical model as a stable family over $B$ whose generic fiber is the log canonical model of the generic fiber of $(X,\Delta)\to B$. This interpretation will be convenient in some of our subsequent discussions.
\end{rem}

To define semi-log-canonical models in the general setting, the following observation is necessary.  

\begin{lemma} \label{lem:normalization locally stable}
Let $f\colon (X,D)\to B$ be a locally stable family over a normal variety. Then its normalization $(X^\nu,\Gamma+D^\nu)\to B$ (where $\Gamma$ is the conductor) is a disjoint union of locally stable families over $B$.
\end{lemma}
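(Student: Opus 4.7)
The plan is to verify each condition in Definition \ref{defi:locally stable families of pairs} for $(X^\nu,\Gamma+D^\nu)\to B$ directly, and then decompose the total space into connected components. The essential input, which I would take from \cite{Kol23}, is that over a normal base $B$ the normalization $\pi\colon X^\nu\to X$ of a locally stable family commutes with restriction to fibers: $(X^\nu)_b\cong (X_b)^\nu$ for every $b\in B$, and the conductor $\Gamma\subset X^\nu$ restricts on each fiber to the conductor of $X_b$. Granted this, the definition of slc immediately gives that each fiber pair $(X^\nu_b,\Gamma_b+D^\nu_b)$ is log canonical, hence in particular slc.

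With the fiber-level compatibility in hand, I would check the remaining conditions in order. Flatness of $X^\nu\to B$ would follow from \cite{Kol23}*{Lemma 10.58}: the morphism has geometrically reduced fibers of pure dimension $\dim X-\dim B$, and it is universally open because $X\to B$ is (by flatness) and $\pi$ is finite surjective. The fact that $\Gamma$ and each component of $D^\nu$ are relative Mumford divisors can be verified fiberwise since $B$ is normal (see the remark after Definition \ref{defi:relative Mumford divisors}), and the Mumford condition at each fiber is encoded by the log canonical structure of the normalized pair. The $\R$-Cartier property is immediate from the crepant pullback formula $K_{X^\nu/B}+\Gamma+D^\nu=\pi^*(K_{X/B}+D)$.

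Finally, I would decompose $X^\nu=\bigsqcup_j X^\nu_j$ into its connected components. Each component, equipped with the restriction of $\Gamma+D^\nu$, inherits all the above properties (flatness, the Mumford conditions, log canonical fibers, $\R$-Cartierness of the log canonical class) and is flat over $B$, hence surjects onto the corresponding connected component of $B$. Thus each $X^\nu_j\to B$ is itself locally stable, and $(X^\nu,\Gamma+D^\nu)\to B$ is the asserted disjoint union.

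The main obstacle I anticipate is the very first step, namely the compatibility of normalization with passage to fibers. This is the place where the normality of $B$ is essential (it fails for general reduced or even seminormal bases, in parallel with Lemma \ref{lem:small Q-Cartier modification over normal base}), and a careful citation of the relevant results in \cite{Kol23} on families of slc pairs is what makes the rest of the verification routine.
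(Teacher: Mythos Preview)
You correctly identify the obstacle---compatibility of normalization with passage to fibers---but you do not resolve it; you defer it to an unspecified citation from \cite{Kol23}. The trouble is that this compatibility is essentially equivalent to the flatness of $X^\nu\to B$, which the paper explicitly flags as ``the main subtlety here,'' and a ready-made statement for arbitrary normal bases does not appear to be available in \cite{Kol23} (over smooth bases it follows from Theorem~\ref{thm:locally stable smooth base}, but that is not the case at hand). So your proposal assumes precisely the hardest part of the lemma. There is also a minor slip: ``$X\to B$ universally open and $\pi$ finite surjective'' does not give universal openness of $X^\nu\to B$, since finite maps need not be open; this would be fixable via \cite{Kol23}*{2.71.2} once you know the fibers, but that again presupposes the missing input.

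The paper sidesteps the circularity by descent. It reduces to the stable case by adding a general relatively ample divisor, then passes to a resolution $\varphi\colon\tB\to B$ where Theorem~\ref{thm:locally stable smooth base} makes the normalization $(\tX^\nu,\widetilde{\Gamma}+\tD^\nu)\to\tB$ a stable family. The key step is to descend this stable family back to $B$ using Lemma~\ref{lem:descend stable family}: triviality along each exceptional fiber $\varphi^{-1}(b)$ is checked by observing that an $S_2$ partial normalization of the fixed slc fiber $X_b$ is determined by which codimension-one nodes it separates, a discrete datum that is locally constant on the connected set $\varphi^{-1}(b)$. Finally one verifies that the descended family is $S_2$ with normal generic fiber, hence normal, hence equals $X^\nu$.
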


The main subtlety here is the flatness of $X^\nu\to B$.

\begin{proof}
By adding a general relative hyperplane section to $D$ we may assume that the family is stable. When $B$ is smooth, the lemma is a direct consequence of Theorem \ref{thm:locally stable smooth base}. In general, let $\varphi\colon \tB\to B$ be a resolution of singularities and let $(\tX,\tD)\to \tB$ be the stable family obtained by base change. Then its normalization $(\tX^\nu,\widetilde{\Gamma}+\tD^\nu)\to \tB$ is stable (more precisely, it is a disjoint union of stable families over $\tB$). We claim that it descends to a stable family over $B$. By Lemma \ref{lem:descend stable family}, it suffices to check that the family is trivial along the exceptional fibers of $\varphi$. 

Let $b\in B$, $W:=\varphi^{-1}(b)$ and $Z:=\tX^\nu\times_{\tB} W$. Note that $W$ is connected as $B$ is normal. As in the proof of Lemma \ref{lem:small Q-Cartier modification over normal base}, in order to prove the above claim we only need to show that $Z\to W$ is a trivial family, since the boundary part is supported on the preimage of $\Supp(D_b)\cup \Sing(X_b)$, thus cannot have any variation. Since $(\tX^\nu,\widetilde{\Gamma}+\tD^\nu)\to \tB$ is stable, the conductor $\widetilde{\Gamma}$ does not contain any irreducible component of any fiber of $\tX^\nu\to \tB$. As the normalization morphism $\tX^\nu\to \tX$ is an isomorphism away from $\widetilde{\Gamma}$, we deduce that it restricts to a partial normalization $Z_w\to X_b$ for any $w\in W$. The latter is uniquely determined if we know which codimension one nodes in $X_b$ are normalized, because $Z_w$ is $S_2$. Since there are only finitely many codimension one nodes and $W$ is connected, we see that $Z_w\to X_b$ is independent of $w\in W$. Thus the conditions of Lemma \ref{lem:descend stable family} are verified and we conclude that $(\tX^\nu,\widetilde{\Gamma}+\tD^\nu)\to \tB$ descends to a stable family $(X^\nu,\Gamma+D^\nu)\to B$, which is at least a partial normalization of $(X,D)\to B$.

It remains to check that $X^\nu$ is normal. Since the fibers of  $(X^\nu,\Gamma+D^\nu)\to B$ are slc, we see that $X^\nu\to B$ has $S_2$ fibers. Combined with the normality of $B$, we deduce from \cite{EGA-IV-2}*{Corollaire 6.4.2} that $X^\nu$ is $S_2$. Moreover, since $X^\nu\to B$ has reduced fibers of constant dimension as well as a normal generic fiber, we see that the singular locus of $X^\nu$ has codimension at least two. These together imply that $X^\nu$ is normal. Thus $(X^\nu,\Gamma+D^\nu)\to B$ is the normalization of $(X,D)\to B$ and it is stable by construction.
\end{proof}

By Koll\'ar's gluing theory \cite{Kol13}*{Chapter 5}, a stable family is determined by its normalization together with the induced involution on $(\Gamma^\nu,\Diff_{\Gamma^\nu}(D^\nu))$, where $\Gamma^\nu$ is the normalization of $\Gamma$ and $\Diff_{\Gamma^\nu}(D^\nu)$ denotes the different (see e.g. \cite{Kol13}*{Definition 4.2}).

\begin{defi} \label{defi:slc model}
Let $(X,D)\to B$ be a locally stable family over a normal variety and let $0\le \Delta\le D$ be an $\R$-divisor such that $K_{X/B}+\Delta$ is big on every irreducible component of the generic fiber. A stable family $(Y,\Delta_Y)\to B$ is called a semi-log-canonical (slc) model of $(X,\Delta)\to B$ if there exists a birational map $\varphi\colon X\dashrightarrow Y$ over $B$ such that the normalization of $(Y,\Delta_Y)$ is the log canonical model over $B$ of the normalization of $(X,\Delta)$.
\end{defi}

Since $X$ and $Y$ have multiple irreducible components here, birational maps between them are defined as in \cite{Kol13}*{Definition 1.11}. In particular, every irreducible component of $X$ dominates an irreducible component of $Y$, and the map $\varphi$ is an isomorphism over the generic points of $Y$. The definition also implies that the conductor on $Y$ is the strict transform of the conductor on $X$.

\begin{proof}[Proof of Theorem \ref{thm:slc model exist}]
Let $(X^\nu,\Gamma+D^\nu)\to B$ be the normalization of $(X,D)\to B$ as before and let $\tau$ be the induced involution on $(\Gamma^\nu,\Diff_{\Gamma^\nu}(D^\nu))$ where $\Gamma^\nu$ is the normalization of $\Gamma$. Let $\Delta^\nu$ be the strict transform of $\Delta$. By assumption, the generic fiber of the family $(X^\nu,\Gamma+\Delta^\nu)\to B$ has a log canonical model, hence the family $(X^\nu,\Gamma+\Delta^\nu)\to B$ also has a log canonical model $(Y^\nu,\Gamma_Y+\Delta^\nu_Y)\to B$ by Remark \ref{rem:equivalent defn of lc model for family}. To construct the slc model, we may now assume that $B$ is smooth. Indeed, if the slc model exists after base change via a resolution of singularities $\tB\to B$, then as a stable family, the slc model we obtain over $\tB$ is automatically a trivial family along the exceptional fibers of $\tB\to B$ (this is because these fibers are connected, and there are only finitely many stable pairs with a given partial normalization; c.f. the proof of Lemma \ref{lem:family trivial}), hence by Lemma \ref{lem:descend stable family} it descends to give the slc model over $B$. 

Thus we shall assume that $B$ is smooth. To proceed, we need to show that the induced birational involution
\[\tau_Y\colon (\Gamma_Y^\nu,\Diff_{\Gamma_Y^\nu}(\Delta^\nu_Y))\dashrightarrow (\Gamma_Y^\nu,\Diff_{\Gamma_Y^\nu}(\Delta^\nu_Y))\]
on the normalization $\Gamma_Y^\nu$ of $\Gamma_Y$ is a regular involution. This holds over the generic point of $B$, since the generic fiber of $(X,\Delta)\to B$ has an slc model by assumption. Note that $(Y^\nu,\Gamma_Y+\Delta^\nu_Y)\to B$ is a stable family by construction and hence $(\Gamma_Y^\nu,\Diff_{\Gamma_Y^\nu}(\Delta^\nu_Y))\to B$ is a stable family by Theorem \ref{thm:locally stable smooth base} and adjunction. By the same argument as in the proof of \cite{HX13}*{Lemma 7.2}, we then deduce that $\tau_Y$ is regular.

We conclude by using Koll\'ar's gluing theory. Note that $(X^\nu,\Gamma+\Delta^\nu)$ is log canonical by Theorem \ref{thm:locally stable smooth base}, hence $(Y^\nu,\Gamma_Y+\Delta^\nu_Y)$ is also log canonical. By \cite{Kol13}*{Corollary 5.33 and Theorem 5.38}, there exists a stable family $(Y,\Delta_Y)\to B$ whose normalization is $(Y^\nu,\Gamma_Y+\Delta^\nu_Y)\to B$. This completes the proof.
\end{proof}

From the above proof, we see that the slc model is unique if it exists. By the same argument, we also get the following corollary.

\begin{coro} \label{cor:extend family}
Let $B$ be an open subset of a normal variety $\oB$, and let $(X,D)\to B$ be a stable family. Assume that its normalization $(X^\nu,\Gamma+D^\nu)\to B$ extends to a stable family $(\oX^\nu,\oGamma+\oD^\nu)\to \oB$. Then $(X,D)\to B$ extends uniquely to a stable family $(\oX, \oD)\to \oB$ whose normalization is $(\oX^\nu,\oGamma+\oD^\nu)\to \oB$.
\end{coro}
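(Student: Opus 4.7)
The strategy directly parallels the gluing argument at the end of the proof of Theorem~\ref{thm:slc model exist}. By Koll\'ar's gluing theory (\cite{Kol13}*{Chapter 5}), a stable family is determined by its normalization together with a regular involution on the conductor, so the problem reduces to extending the gluing involution $\tau$ encoded by $(X,D)\to B$ from $B$ to $\oB$. Uniqueness is then immediate: any extension is determined by the fixed normalization $(\oX^\nu,\oGamma+\oD^\nu)\to \oB$ together with the unique regular extension of $\tau$, which, if it exists, is unique because $B\subseteq \oB$ is dense.

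For existence, I would first reduce to the case when $\oB$ is smooth. Given a resolution of singularities $\varphi\colon \widetilde{\oB}\to \oB$, the pullback of the normalization to $\widetilde{\oB}$ remains stable. If the extension exists over $\widetilde{\oB}$, then arguing exactly as in the proof of Theorem~\ref{thm:slc model exist}, it must be trivial along each connected exceptional fiber of $\varphi$: the already-trivial normalization admits only finitely many gluing involutions producing an slc pair, so on a connected base the gluing datum is constant, and Lemma~\ref{lem:descend stable family} descends the extension to $\oB$.

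Assume now that $\oB$ is smooth. By Theorem~\ref{thm:locally stable smooth base} and adjunction, $(\oGamma^\nu,\Diff_{\oGamma^\nu}(\oD^\nu))\to \oB$ is itself a stable family, so the desired extended involution will have a stable family as both its source and target. The involution $\tau$ on the $B$-part of this conductor extends at least as a birational self-map $\overline{\tau}$ of $(\oGamma^\nu,\Diff_{\oGamma^\nu}(\oD^\nu))$; the main step, and essentially the only obstacle, is to verify that $\overline{\tau}$ is actually regular. This is precisely the type of statement settled by the argument in \cite{HX13}*{Lemma 7.2}, which is used for the same purpose in the proof of Theorem~\ref{thm:slc model exist}: the separatedness of the moduli of stable pairs forces any birational self-map between two stable families to extend to a regular isomorphism. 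With the regular involution $\overline{\tau}$ in hand, an application of Koll\'ar's gluing theorem (\cite{Kol13}*{Corollary 5.33 and Theorem 5.38}) to $(\oX^\nu,\oGamma+\oD^\nu)\to \oB$ produces the required stable family $(\oX,\oD)\to \oB$ with the prescribed normalization.
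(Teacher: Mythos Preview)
Your proposal is correct and follows essentially the same approach as the paper: the paper's proof is literally ``By the same argument [as in Theorem~\ref{thm:slc model exist}], we also get the following corollary,'' and you have spelled out precisely that argument---reduce to smooth base via Lemma~\ref{lem:descend stable family} and the finiteness of gluings over a fixed normalization, then use Theorem~\ref{thm:locally stable smooth base}, adjunction, and the \cite{HX13}*{Lemma 7.2} argument to extend the involution, and finally apply \cite{Kol13}*{Corollary 5.33 and Theorem 5.38}.
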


\section{Finiteness of ample models}\label{sec:Finiteness of ample models}

In \cite{BCHM10}*{Corollary 1.1.5}, results on the finiteness of ample models are presented. To study the wall crossing for moduli of stable pairs in the log canonical setting, we need to establish the finiteness of ample models in a more general setting. This is the main task of this section.

Let us fix some notation. Let $f\colon X\to B$ be a projective morphism of normal quasi-projective varieties. Let $D_1,\ldots,D_n$ be prime divisors on $X$. Consider the vector space
$$V:=\bigoplus^n_{i=1}\R D_i \cong \R^n.$$
For any $D=\sum^n_{i=1}a_iD_i$, we define
$$\norm{D}:=\max_{i}\{|a_i|\}$$
which gives a norm on $V$. For a log canonical pair $(X, D)$ over $B$, we define its ample model over $B$ as the ample model of $K_X+D$ over $B$. 

We follow \cite{BCHM10}*{Section 3.7} for definitions on convex geometry. A polytope (resp. rational polytope) in $\R^n$ is the convex hull of a finite set of points (resp. rational points). A finite (resp. rational) polyhedral decomposition of a (resp. rational) polytope $\cP$ is a finite decomposition $\cP=\cup_{i=1}^p \cP_i$ such that each $\cP_i$ is a (resp. rational) polytope, every face of $\cP_i$ coincides with $\cP_j$ for some $j$, and for all $1\le i,j\le p$, the intersection of $\cP_i$ and $\cP_j$ is either empty or a common face of both $\cP_i$ and $\cP_j$. Usually the interior $\cP_i^\circ$ of the polytopes $\cP_i$ are called the chambers, while their faces are called the walls. The main result of this section is the following.

\begin{thm} \label{thm:chamber decomp by ample model}
Notation as above. Let $\cP\subseteq V$ be a rational polytope such that $(X, D)$ is a log canonical pair and has a good minimal model over $B$ for every $D\in\cP$. 

Then there exist a finite rational polyhedral decomposition $\cP=\cup_{i=1}^p \cP_i$, together with finitely many rational maps $\psi_j\colon X\dashrightarrow Z_j$ over $B$, $1\le j\le q$, such that: for any $1\le i\le p$, there exists some $1\le j\le q$ such that $\psi_j\colon X\dashrightarrow Z_j$ is the ample model of $(X, D)$ over $B$ for any $D\in \cP_i^\circ$.
\end{thm}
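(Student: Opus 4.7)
The approach generalizes \cite{BCHM10}*{Corollary 1.1.5} from the klt/log-general-type setting to the log canonical setting, using the hypothesis that every pair $(X,D)$, $D\in\cP$, admits a good minimal model over $B$. The first step is a reduction to the log smooth case. Take a log resolution $h\colon Y\to X$ of $(X,\sum D_i)$ and let $E$ be the reduced $h$-exceptional divisor. For each $D\in\cP$, log canonicity of $(X,D)$ yields
\[
K_Y + h_*^{-1}D + E = h^*(K_X+D) + F_D
\]
with $F_D\ge 0$ and $h$-exceptional; by Lemma \ref{lem:compare gmm existence on bir model}, $(Y,h_*^{-1}D+E)$ has a good minimal model over $B$ and shares the log canonical (hence ample) model with $(X,D)$. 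I may thus replace $(X,\cP)$ by $(Y,\widetilde\cP)$, where $\widetilde\cP$ is the image of $\cP$ under the affine map $D\mapsto h_*^{-1}D+E$, and assume $X$ is smooth with $\sum D_i$ SNC and every boundary in $\widetilde\cP$ dlt.

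Next, for each $\Delta\in\widetilde\cP$, fix a good minimal model $\pi_\Delta\colon X\dashrightarrow Y_\Delta$ over $B$ (which exists by hypothesis and the previous reduction) and its ample model $\varphi_\Delta\colon Y_\Delta\to Z_\Delta$ over $B$. The key structural input is that for each such $Y_\Delta$, the set
\[
\widetilde\cP_{Y_\Delta} := \bigl\{\Delta'\in\widetilde\cP : Y_\Delta \text{ is a minimal model of }(X,\Delta')\text{ over }B\bigr\}
\]
is a rational polytope: it is cut out in $\widetilde\cP$ by the polyhedral conditions that $K_{Y_\Delta}+(\pi_\Delta)_*\Delta'$ be $\R$-Cartier, $(Y_\Delta,(\pi_\Delta)_*\Delta')$ be log canonical, and $K_{Y_\Delta}+(\pi_\Delta)_*\Delta'$ be nef over $B$. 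Compactness of $\widetilde\cP$, combined with uniqueness of the log canonical model (Remark \ref{rem:gmm indep of model}), yields a finite cover $\widetilde\cP = \bigcup_k \widetilde\cP_{Y_k}$ by such rational polytopes. Each ample model arises from some $Y_k$ as the ample model of the semiample divisor $K_{Y_k}+(\pi_k)_*\Delta'$, and the set of $\Delta'\in\widetilde\cP_{Y_k}$ producing a given ample model is again a rational polytope cut out by the face structure of the nef cone. A common refinement of these decompositions yields the desired $\cP=\bigcup_i \cP_i$.

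The main obstacle is the finiteness of minimal models covering $\widetilde\cP$. In the klt/big setting of BCHM this follows from finite generation of the log canonical ring and Mori-chamber style finiteness of weak log canonical models; neither is directly available in the lc setting. The argument will instead exploit the assumed existence of good minimal models together with termination of MMP with scaling (Theorem \ref{thm:MMP scaling fiber has gmm}) and the negativity-lemma consequence of Theorem \ref{thm:birkar MMP contract exceptional}: moving along rational line segments inside $\widetilde\cP$ and running the MMP with scaling, only finitely many models are visited, and a Noetherian/compactness argument on $\widetilde\cP$ then bounds the total number of $Y_k$ that appear. Once this finiteness is secured, the polyhedral structure in the final step follows from standard rational polyhedral manipulations of nef cones.
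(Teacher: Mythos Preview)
Your reduction to the log smooth case via a log resolution is correct and matches the paper. The difficulty lies entirely in the finiteness step, and your sketch there does not close.

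The gap is in the claim that finitely many polytopes $\widetilde\cP_{Y_k}$ cover $\widetilde\cP$. It is true (via the cone theorem for lc pairs) that each $\widetilde\cP_{Y_\Delta}$ is a rational polytope, but these are \emph{closed} sets, and in general $\widetilde\cP_{Y_\Delta}$ does not contain a neighbourhood of $\Delta$. Indeed, whenever $K_X+\Delta$ is not big over $B$ (which the hypotheses allow), the divisor $K_{Y_\Delta}+(\pi_\Delta)_*\Delta$ sits on the boundary of the relative nef cone of $Y_\Delta$, and a generic perturbation of $\Delta$ inside $\widetilde\cP$ destroys nefness on $Y_\Delta$. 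So compactness of $\widetilde\cP$ cannot be invoked to extract a finite subcover. Your fallback (``MMP with scaling along line segments, then a Noetherian/compactness argument'') only shows that finitely many models appear along each individual segment; it gives no bound uniform over all directions, and the vague Noetherian step is not a proof. A second, related issue: even on a single $\widetilde\cP_{Y_k}$, the partition by ample model is governed by the face structure of the nef cone of $Y_k$, which is not known to be locally rational polyhedral in the lc setting, so ``cut out by the face structure of the nef cone'' needs its own argument.

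The paper avoids both problems by a different mechanism. Rather than seeking a finite list of minimal models, it proves a local cone structure (Proposition \ref{prop:local cone structure}): for each $D\in\cP$ there is $\delta>0$ and a rational point $\Delta$ in the rational envelope of $D$ with $\|D-\Delta\|<\delta$ such that for every $G\in\cP$ with $\|D-G\|<\delta$, the ample model of $(X,tG+(1-t)\Delta)$ is independent of $t\in(0,1]$. The proof of this uses Lemmas \ref{lem:MMP K+D trivial in nbhd}--\ref{lem:MMP in nbhd give weak lc model of Q-divisor}: one first runs a $(K_X+D)$-MMP to a good minimal model $W$, observes that for $G$ close to $D$ this is also a $(K_X+G)$-MMP, and that any further $(K_W+G_W)$-MMP is $(K_W+\Delta_W)$-trivial (length-of-extremal-rays plus the rational-envelope trick). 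One then factors through the ample model $Z_0$ of $(Y,\Delta_Y)$ and the ample model $Z$ of $(Y,G_Y)$ over $Z_0$, and checks directly that $Z$ is the ample model of every convex combination. Given this, the theorem follows by induction on $\dim\cP$: cover $\cP$ by finitely many such neighbourhoods, and inside each the ample model is determined by its value on the boundary (a union of lower-dimensional rational polytopes), with the decomposition coned off from the rational vertex $\Delta$. This yields both finiteness and rationality simultaneously, without ever needing finiteness of minimal models.
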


The proof largely follows the general inductive strategy of \cite{BCHM10}. The key step is Proposition \ref{prop:local cone structure}, which essentially implies that the finite rational polyhedral decomposition locally exists. Before the proof, we need to first prepare a few auxiliary lemmas.

\begin{lemma} \label{lem:MMP K+D trivial in nbhd}
Let $\cP\subseteq V$ be a polytope such that $(X,\Delta)$ is log canonical for every $\Delta\in \cP$, and let $D\in \cP$ be such that $K_X+D$ is nef over $B$. Assume that $X$ is $\Q$-factorial. Then there exists a positive real number $\delta$ such that for any $G\in\cP$ satisfying $\norm{D-G}<\delta$, any $(K_X+G)$-MMP over $B$ is $(K_X+D)$-trivial.
\end{lemma}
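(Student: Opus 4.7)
The plan is to reduce the lemma to a single-step statement on extremal rays and then iterate it through the MMP. Specifically, I will first establish the following claim:
\begin{equation*}
(\star)\quad \exists\,\delta>0 \text{ such that for every } G\in\cP \text{ with } \norm{D-G}<\delta, \text{ every } (K_X+G)\text{-negative}
\end{equation*}
extremal ray $R$ over $B$ satisfies $(K_X+D)\cdot R = 0$. Granted $(\star)$, each step of any $(K_X+G)$-MMP over $B$ contracts or flips such a ray and is therefore $(K_X+D)$-trivial. After such a step, the pushforward (for a divisorial contraction) or strict transform (for a flip) of $K_X+D$ is again $\R$-Cartier and nef over $B$ on the new model; the components $D_i$ and the polytope $\cP$ push forward compatibly, so $(\star)$ can be invoked again with some (possibly smaller) $\delta$ on the new model. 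The distance $\norm{D-G}$ is preserved under pushforward, so a fixed $G$ close enough to $D$ at the start stays close enough at each subsequent model.

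The proof of $(\star)$ begins with the cone theorem for $\Q$-factorial log canonical pairs applied to $(X,G)$: each $(K_X+G)$-negative extremal ray $R$ over $B$ is generated by a curve $C_R$ with $0<-(K_X+G)\cdot C_R\le 2\dim X$. Nefness of $K_X+D$ gives $(K_X+D)\cdot C_R\ge 0$, and the identity
\[
(D-G)\cdot C_R = (K_X+D)\cdot C_R - (K_X+G)\cdot C_R > 0
\]
then shows $C_R$ has positive intersection with $D-G$. What remains is to upgrade $(K_X+D)\cdot C_R\ge 0$ to equality when $\norm{D-G}$ is small.

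The heart of the argument is the following local finiteness statement: there is a neighborhood $U$ of $D$ in $V$ such that the set
\[
\mathcal{R}(U):=\{R \text{ extremal ray of } \overline{\mathrm{NE}}(X/B) : (K_X+G)\cdot R<0 \text{ for some } G\in U\cap\cP\}
\]
is finite. Granting this, for each $R_j\in\mathcal{R}(U)$ we have $(K_X+D)\cdot R_j\ge 0$; by continuity of the intersection pairing, if $(K_X+D)\cdot R_j>0$ then $(K_X+G)\cdot R_j>0$ for $G$ in some smaller neighborhood $U_j$ of $D$, excluding $R_j$ from being $(K_X+G)$-negative there. Intersecting $U$ with the finitely many $U_j$ yields the desired $\delta$.

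The main obstacle is therefore establishing the finiteness of $\mathcal{R}(U)$. The approach is to bound the intersection of extremal curves with a fixed $f$-ample divisor. Using compactness of $\cP$, I choose an $f$-ample $\R$-divisor $A$ such that $A\pm D_i$ is $f$-ample for every component $D_i$, giving $|D_i\cdot C|\le A\cdot C$ for any effective curve $C$. Combined with the length bound $-(K_X+G)\cdot C_R\le 2\dim X$, the fact that $G$ varies in a bounded region of $V$, and the expansion $K_X+G = K_X + \sum g_i D_i$ with uniformly bounded coefficients, one obtains a uniform upper bound $A\cdot C_R\le M$ for the generators of rays in $\mathcal{R}(U)$. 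The set of extremal rays of $\overline{\mathrm{NE}}(X/B)$ whose generators satisfy such a bound is finite, by standard discreteness of the Mori cone coming from the cone theorem for $\Q$-factorial log canonical pairs (Ambro--Fujino). This finiteness completes the proof of $(\star)$ and hence of the lemma.
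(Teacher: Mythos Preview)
Your strategy---reduce to a one-step statement $(\star)$ and iterate---is exactly the right shape, and it is the route taken in the references the paper cites. However, the argument you give for $(\star)$ has a genuine gap, and the iteration step as written is also insufficient.

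\textbf{The bound $A\cdot C_R\le M$ does not follow.} From the length estimate you only know $-(K_X+G)\cdot C_R\le 2\dim X$, and from your choice of $A$ you know $|D_i\cdot C_R|\le A\cdot C_R$. Expanding gives
\[
|K_X\cdot C_R|\le 2\dim X+\Big(\sum_i|g_i|\Big)\,A\cdot C_R,
\]
which bounds $K_X\cdot C_R$ in terms of $A\cdot C_R$, not the other way around. There is no relation in the hypotheses tying $A$ to $K_X$ and the $D_i$, so no upper bound on $A\cdot C_R$ can be extracted; in particular your finiteness claim for $\mathcal{R}(U)$ is unsupported. The standard proof of $(\star)$ does not go through finiteness of rays at all: one argues directly with the length bound and the convex geometry of $\cP$ to show that whenever $(K_X+D)\cdot C>0$ one can produce a boundary in $\cP$ violating the bound $-(K_X+\cdot)\cdot C\le 2\dim X$, forcing $(K_X+D)\cdot C=0$. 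The crucial feature of that argument is that the resulting $\delta$ depends only on $\dim X$ and the combinatorics of $\cP$, not on $X$.

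\textbf{The iteration needs a uniform $\delta$.} You allow $\delta$ to shrink at each step. Since a $(K_X+G)$-MMP for a log canonical pair need not terminate, a step-by-step shrinking $\delta$ does not produce a single $\delta$ valid for the whole MMP, which is what the lemma asserts. This is exactly why the $\delta$ in $(\star)$ must be obtained uniformly in the model: once $\delta$ depends only on $\dim X$ and $\cP$, both of which are unchanged by pushforward, the same $\delta$ works on every $X_i$ and the induction goes through. Your finiteness-of-rays approach, even if it worked on $X$, would produce a $\delta$ depending on the specific configuration of extremal rays on $X$, and there is no reason this survives an MMP step.
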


\begin{proof}
This is a well known consequence of the lengths of extremal rays. See e.g. \cite{Bir11}*{Proposition 3.2(5)} or \cite{LT22}*{Proposition 2.12}.
\end{proof}

\begin{lemma} \label{lem:MMP in small nbhd give weak lc model}
In the setting of Theorem \ref{thm:chamber decomp by ample model}, assume that $X$ is $\Q$-factorial and let $D\in\cP$. Then there exists a positive real number $\delta$ such that: 

for any $G\in\cP$ satisfying $\norm{D-G}<\delta$, there exists a minimal model $X\dashrightarrow Y$ of $(X,G)$ over $B$ that is also a weak log canonical model of $(X,D)$ over $B$.
\end{lemma}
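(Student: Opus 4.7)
The plan is to reduce to the case where $K_X+D$ is already nef by first passing to a $\Q$-factorial good minimal model $\pi\colon X\dashrightarrow Y$ of $(X,D)$ over $B$. On $Y$, Lemma \ref{lem:MMP K+D trivial in nbhd} will force every step of any $(K_Y+G_Y)$-MMP over $B$ to be $(K_Y+D_Y)$-trivial, and the resulting terminal model will simultaneously be a minimal model of $(X,G)$ and a weak log canonical model of $(X,D)$ over $B$.

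First I would use the hypothesis of Theorem \ref{thm:chamber decomp by ample model} together with $\Q$-factoriality of $X$ to run a $(K_X+D)$-MMP over $B$ with scaling of an ample $\R$-divisor. By Theorem \ref{thm:MMP scaling fiber has gmm} (and the standard termination results for log canonical pairs admitting good minimal models), this terminates with a $\Q$-factorial good minimal model $\pi\colon X\dashrightarrow Y$ of $(X,D)$ over $B$, on which $K_Y+D_Y$ is semiample over $B$. Let $\rho\colon Y\to \widetilde{W}$ be the ample model of $K_Y+D_Y$ over $B$, so that $K_Y+D_Y\simover{\widetilde{W}}0$.

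For $G\in\cP$ sufficiently close to $D$, continuity of log canonicity under perturbations of the boundary gives that $(Y,G_Y)$ remains log canonical. Applying Lemma \ref{lem:MMP K+D trivial in nbhd} to $(Y,D_Y)$ and the perturbed polytope $\{H_Y:H\in\cP\}$ on $Y$, every step of any $(K_Y+G_Y)$-MMP over $B$ is $(K_Y+D_Y)$-trivial, hence $\rho$-trivial, so such an MMP is equivalently a $(K_Y+G_Y)$-MMP over $\widetilde{W}$. Combining the identity $K_Y+D_Y\simover{\widetilde{W}}0$ with the hypothesis that $(X,G)$, and hence $(Y,G_Y)$, has a good minimal model over $B$, running a $(K_Y+G_Y)$-MMP over $B$ with scaling of an ample divisor terminates, by Theorems \ref{thm:birkar gmm} and \ref{thm:MMP scaling fiber has gmm}, with a good minimal model $Y\dashrightarrow Z$ of $(Y,G_Y)$ over $B$.

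The composition $X\dashrightarrow Y\dashrightarrow Z$ is then the desired birational contraction: $K_Z+G_Z$ is nef over $B$, so $Z$ is a minimal model of $(X,G)$ over $B$, while the $(K_Y+D_Y)$-triviality of every MMP step implies that $K_Z+D_Z$ is also nef over $B$, and the negativity lemma together with the fact that $\pi$ is a $(K_X+D)$-MMP gives $a(E,X,D)\le a(E,Z,D_Z)$ for every divisor $E$ exceptional over $Z$, showing that $Z$ is a weak log canonical model of $(X,D)$ over $B$. The main obstacle will be justifying carefully that $(Y,G_Y)$ inherits the good minimal model property from $(X,G)$: since $\pi\colon X\dashrightarrow Y$ is a $(K_X+D)$-MMP rather than a $(K_X+G)$-MMP, Lemma \ref{lem:compare gmm existence on bir model} does not immediately apply, and one likely needs either a comparison on a common log resolution with the boundary enlarged to include all $\pi$-contracted divisors (yielding effective crepant relations to both $(X,G)$ and $(Y,G_Y)$), or an application of Theorem \ref{thm:gmm smaller coef} after dominating $G_Y$ by the strict transform on $Y$ of a suitably chosen nearby point of $\cP$.
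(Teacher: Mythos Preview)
Your overall strategy matches the paper's almost exactly: pass to a $\Q$-factorial good minimal model of $(X,D)$, then run a $(K+G)$-MMP there using Lemma \ref{lem:MMP K+D trivial in nbhd} to keep it $(K+D)$-trivial. However, there is a genuine gap, and you have in fact already located it in your last paragraph---you just haven't noticed that the same missing observation also undermines your conclusion that $Z$ is a minimal model of $(X,G)$.

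The issue is that ``$K_Z+G_Z$ nef over $B$'' does \emph{not} make $Z$ a minimal model of $(X,G)$: Definition \ref{defi:good minimal model} requires $a(E,X,G)<a(E,Z,G_Z)$ for every divisor $E$ contracted by $X\dashrightarrow Z$. You established that $Y\dashrightarrow Z$ is a $(K_Y+G_Y)$-MMP, which handles the divisors contracted on that segment; but $\pi\colon X\dashrightarrow Y$ is a $(K_X+D)$-MMP, and you have said nothing about $a(E,X,G)$ versus $a(E,Y,G_Y)$ for the $\pi$-exceptional divisors. The paper fixes this---and simultaneously dissolves the ``obstacle'' you flagged---by a simple remark: the $(K_X+D)$-MMP $\pi$ is a fixed composition of finitely many steps, each contracting a ray $R$ with $(K_X+D)\cdot R<0$; by continuity there is a $\delta>0$ such that $(K_X+G)\cdot R<0$ on every such ray whenever $\norm{D-G}<\delta$, so $\pi$ is \emph{also} a $(K_X+G)$-MMP. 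Once you have this, Lemma \ref{lem:compare gmm existence on bir model} applies directly to give that $(Y,G_Y)$ has a good minimal model over $B$, and the composite $X\dashrightarrow Y\dashrightarrow Z$ is a $(K_X+G)$-MMP, hence $Z$ is a minimal model of $(X,G)$ by construction. No common resolution or appeal to Theorem \ref{thm:gmm smaller coef} is needed.
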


\begin{proof}
Since $(X, D)$ has a good minimal model over $B$, we can run a $(K_X+D)$-MMP $\varphi\colon X\dashrightarrow W$ over $B$ which terminates with a good minimal model $(W,D_W)$ of $(X, D)$ over $B$ by \cite{HH20}*{Theorem 1.7}. Since $X$ is $\Q$-factorial, $W$ is also $\Q$-factorial. Since $\varphi$ consists of only finitely many $(K_X+D)$-MMP steps over $B$, there exists a positive real number $\delta$ such that for any $G\in\cP$ satisfying $\norm{D-G}<\delta$, $\varphi$ is also a $(K_X+G)$-MMP over $B$. Since $(X, G)$ has a good minimal model over $B$, $(W, G_W)$ has a good minimal model over $B$ by Lemma \ref{lem:compare gmm existence on bir model}. By choosing $\delta$ sufficiently small, we can assume that for any $G\in\cP$ satisfying $\norm{D-G}<\delta$, any $(K_W+G_W)$-MMP over $B$ is $(K_W+D_W)$-trivial by Lemma \ref{lem:MMP K+D trivial in nbhd} since $K_W+D_W$ is nef over $B$. We can run a $(K_W+G_W)$-MMP over $B$ which terminates with a good minimal model $(Y,G_Y)$ of $(W, G_W)$ over $B$ by \cite{HH20}*{Theorem 1.7}. Since this $(K_W+G_W)$-MMP over $B$ is $(K_W+D_W)$-trivial, we see that $(Y,D_Y)$ is a weak log canonical model of $(X,D)$ over $B$.
\end{proof}

We also need a slight improvement of the above lemma that allows perturbation of the divisor $D$ to some $\Q$-divisor in the same neighbourhood. This is crucial in order to show that the polyhedral decomposition in Theorem \ref{thm:chamber decomp by ample model} can be made rational.

\begin{lemma} \label{lem:MMP in nbhd give weak lc model of Q-divisor}
In the setting of Theorem \ref{thm:chamber decomp by ample model}, assume that $X$ is $\Q$-factorial and let $D\in\cP$. Then there exists a positive real number $\delta$ such that:

for any $\Delta\in\cP$ in the rational envelope of $D$ satisfying $\norm{D-\Delta}<\delta$, and any $G\in\cP$ with $\norm{D-G}<\delta$, there exists a minimal model $X\dashrightarrow Y$ of $(X,G)$ over $B$ that is also a weak log canonical model of $(X,\Delta)$ over $B$.
\end{lemma}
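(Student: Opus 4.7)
The plan is to follow the template of the proof of Lemma \ref{lem:MMP in small nbhd give weak lc model}, enhanced by one extra observation: since $L$ is the rational envelope of $D$, any affine function $V\to\R$ with rational coefficients that vanishes at $D$ must also vanish on all of $L$. This will let us upgrade intersection-theoretic conclusions about $D$ on the minimal model to conclusions about every $\Delta\in L$.

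First I would run the construction of Lemma \ref{lem:MMP in small nbhd give weak lc model} verbatim to produce a $(K_X+D)$-MMP $\varphi\colon X\dashrightarrow W$ terminating with a good minimal model $(W,D_W)$ over $B$, and fix $\delta>0$ so that $\varphi$ is also a $(K_X+G)$-MMP and a $(K_X+\Delta)$-MMP for every $G,\Delta\in\cP$ with $\norm{G-D},\norm{\Delta-D}<\delta$, and so that by Lemma \ref{lem:MMP K+D trivial in nbhd} applied on the $\Q$-factorial pair $(W,D_W)$, every $(K_W+G_W)$-MMP and every $(K_W+\Delta_W)$-MMP over $B$ is $(K_W+D_W)$-trivial. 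Given such a $G$, run a $(K_W+G_W)$-MMP over $B$ to reach a good minimal model $(Y,G_Y)$ of $(X,G)$ over $B$; its existence follows exactly as in the proof of Lemma \ref{lem:MMP in small nbhd give weak lc model}. The remaining task is to verify that $(Y,\Delta_Y)$ is a weak log canonical model of $(X,\Delta)$ for every $\Delta\in L\cap\cP$ with $\norm{\Delta-D}<\delta$.

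I would split this verification in two. First I claim that $(W,\Delta_W)$ is already a minimal model of $(X,\Delta)$ over $B$. The discrepancy inequality for $\varphi$-exceptional divisors holds by continuity, since $a(E,W,\Delta_W)-a(E,X,\Delta)$ is an affine function of $\Delta$ which is strictly positive at $D$. For nefness of $K_W+\Delta_W$ over $B$, suppose for contradiction that some extremal ray $R\subseteq\overline{NE}(W/B)$ has $(K_W+\Delta_W)\cdot R<0$. Since $\Delta\in\cP$ is close enough to $D$, Lemma \ref{lem:MMP K+D trivial in nbhd} forces $(K_W+D_W)\cdot R=0$. Because $W$ is $\Q$-factorial (inherited from $X$ via the MMP), both $K_W$ and each $(D_i)_W$ are $\Q$-Cartier, so $\Delta'\mapsto(K_W+\Delta'_W)\cdot R$ is an affine function on $V$ with rational coefficients. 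Its vanishing at $D$ is therefore a rational linear constraint satisfied by $D$, hence by definition of the rational envelope it is satisfied by every $\Delta'\in L$; in particular $(K_W+\Delta_W)\cdot R=0$, contradicting the negativity assumption.

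Second, I claim that the $(K_W+G_W)$-MMP $W\dashrightarrow Y$ is $(K_W+\Delta_W)$-trivial. Every contracted extremal ray $R_i$ satisfies $(K_W+D_W)\cdot R_i=0$ by our choice of $\delta$, and the same rational envelope argument promotes this to $(K_W+\Delta_W)\cdot R_i=0$ for every $\Delta\in L$. Triviality preserves both nefness of $K_W+\Delta_W$ and the relevant discrepancies along $W\dashrightarrow Y$, so combined with the first claim it shows that $(Y,\Delta_Y)$ is a weak log canonical model of $(X,\Delta)$ over $B$. I expect the main subtlety to be the nefness argument in the first claim: Lemma \ref{lem:MMP K+D trivial in nbhd} by itself does not prevent $(K_W+\Delta_W)$-negative extremal rays, and the rational envelope hypothesis on $\Delta$ is precisely the extra input that closes this gap.
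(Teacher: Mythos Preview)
Your proposal is correct and follows essentially the same approach as the paper: both run the $(K_X+D)$-MMP $\varphi\colon X\dashrightarrow W$, use Lemma \ref{lem:MMP K+D trivial in nbhd} to get $(K_W+D_W)$-triviality of the subsequent $(K_W+G_W)$-MMP, and then invoke the rational envelope observation (that rational intersection-theoretic constraints vanishing at $D$ vanish on all of $L$) to upgrade this to $(K_W+\Delta_W)$-triviality and to obtain nefness of $K_W+\Delta_W$. The only minor redundancy is that once you have arranged $\varphi$ to be a $(K_X+\Delta)$-MMP, the discrepancy inequality for $\varphi$-exceptional divisors is automatic and the separate continuity argument is unnecessary.
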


Recall that the rational envelope of $D\in V$ is defined as the smallest rational affine subspace of $V$ that contains $D$.

\begin{proof}
Let $\delta$ be the positive real number given by Lemma \ref{lem:MMP in small nbhd give weak lc model}. We analyze the maps in the proof of Lemma \ref{lem:MMP in small nbhd give weak lc model}:
 \[
    \begin{tikzcd}
			(X,D) \arrow[r, dashed, "\varphi"] & (W,D_W) \arrow[r, dashed, "\psi"] & (Y,D_Y) . 
    \end{tikzcd}
\]
Recall that $\varphi$ is a $(K_X+G)$-MMP over $B$ for all $G\in \cP$ with $\norm{D-G}<\delta$ and $(Y,G_Y)$ is a minimal model of $(X,G)$ over $B$. Moreover, any $(K_W+G_W)$-MMP over $B$ is $(K_W+D_W)$-trivial. For any curve $C\subseteq W$, the condition $(K_W+D_W)\cdot C=0$ on $D$ defines a hyperplane in $V$ with rational coefficients, hence it contains the rational envelope of $D$. In particular, if $\Delta\in\cP$ lies in the rational envelope of $D$, then $(K_W+D_W)\cdot C=0$ implies $(K_W+\Delta_W)\cdot C=0$. Thus the $(K_W+G_W)$-MMP over $B$ is also $(K_W+\Delta_W)$-trivial. If in addition $\norm{D-\Delta}<\delta$, then we also know that $K_W+\Delta_W$ is nef over $B$: otherwise, the $(K_W+\Delta_W)$-MMP over $B$ contracts some curve $C\subseteq W$ with $(K_W+\Delta_W)\cdot C<0$, but since $\norm{D-\Delta}<\delta$, we also have $(K_W+D_W)\cdot C=0$, which implies $(K_W+\Delta_W)\cdot C=0$, a contradiction. Again because $\norm{D-\Delta}<\delta$, we have that $\varphi$ is a $(K_X+\Delta)$-MMP over $B$, thus $(W,\Delta_W)$ is also a minimal model of $(X,\Delta)$ over $B$. Since $\psi$ is $(K_W+\Delta_W)$-trivial, we get that $Y$ is a weak log canonical model of $(X,\Delta)$ over $B$. The proof is now complete.
\end{proof}

\begin{prop}\label{prop:local cone structure}
In the setting of Theorem \ref{thm:chamber decomp by ample model}, let $D\in\cP$. Then there exist a positive real number $\delta$ such that:

for any $\Delta\in\cP$ in the rational envelope of $D$ satisfying $\norm{D-\Delta}<\delta$, and any $G\in\cP$ with $\norm{D-G}<\delta$, the ample model of $(X, tG+(1-t)\Delta)$ over $B$ is independent of $t\in(0,1]$.
\end{prop}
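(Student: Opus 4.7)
The plan is to use Lemma~\ref{lem:MMP in nbhd give weak lc model of Q-divisor} to produce a single birational model $Y$ on which all of $K_Y+D_Y$, $K_Y+G_Y$ and $K_Y+\Delta_Y$ are semiample, verify that $Y$ is a good minimal model of $(X,L_t)$ for every $t\in(0,1]$ where $L_t:=tG+(1-t)\Delta$, and then show via a cone-theoretic analysis near $[K_Y+D_Y]$ that the face of the relative Mori cone contracted by the ample model of $K_Y+L_{t,Y}$ is constant in $t\in(0,1]$.

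First, by taking a dlt modification of $(X,D)$ and invoking Lemma~\ref{lem:compare gmm existence on bir model}, I may assume $X$ is $\Q$-factorial dlt; the hypotheses and conclusion of the proposition transfer. Let $\delta_0$ be as in Lemma~\ref{lem:MMP in nbhd give weak lc model of Q-divisor}; the final $\delta$ will be $\le\delta_0$. For admissible $\Delta,G$, the lemma supplies a birational contraction $X\dashrightarrow Y$ over $B$ that is a minimal model of $(X,G)$ and a weak log canonical model of $(X,\Delta)$. Inspecting its proof---a $(K_X+D)$-MMP to a good minimal model $W$ of $(X,D)$ followed by a $(K_W+D_W)$-trivial MMP $W\dashrightarrow Y$---one sees that $Y$ is also a weak log canonical model of $(X,D)$. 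Since $D,G,\Delta\in\cP$ each admits a good minimal model, Remark~\ref{rem:gmm indep of model} gives semiampleness over $B$ of $K_Y+D_Y$, $K_Y+G_Y$ and $K_Y+\Delta_Y$. A direct discrepancy computation---convexly combining the defining inequalities for $G$ and $\Delta$, strict on the $G$ side for $t>0$---shows that $Y$ is also a minimal, and hence good minimal, model of $(X,L_t)$ for every $t\in(0,1]$, so $K_Y+L_{t,Y}$ is semiample over $B$.

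The ample model of $(X,L_t)$ over $B$ is determined by the face
\[
F_t:=\{\gamma\in\overline{NE}(Y/B):(K_Y+L_{t,Y})\cdot\gamma=0\}.
\]
Nefness of both $K_Y+G_Y$ and $K_Y+\Delta_Y$ forces $F_t=F_G\cap F_\Delta$ for $t\in(0,1)$ and $F_1=F_G$, where $F_G,F_\Delta,F_D$ are defined analogously. Thus it suffices to prove $F_G\subseteq F_\Delta$, which I split into $F_G\subseteq F_D$ and $F_D\subseteq F_\Delta$. The second inclusion is the rational envelope argument: for any $\gamma\in F_D$, the function $\phi_\gamma(L):=(K_Y+L_Y)\cdot\gamma$ is rational affine on $V$ (since $Y$ is $\Q$-factorial), vanishes at $D$, hence on the rational envelope of $D$, hence at $\Delta$.

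The first inclusion $F_G\subseteq F_D$ is the main obstacle and is where the size of $\delta$ enters. I would prove it by the cone theorem together with length-of-extremal-rays bounds in the spirit of Lemma~\ref{lem:MMP K+D trivial in nbhd}: for any extremal ray $R$ of $\overline{NE}(Y/B)$ with $(K_Y+G_Y)\cdot R\le 0$, a small ample perturbation makes $R$ a $(K_Y+G_Y-\varepsilon A)$-negative extremal ray, hence of bounded length by the cone theorem, and then $(D_Y-G_Y)\cdot R$ can be made arbitrarily small by shrinking $\delta$; combined with nefness of $K_Y+D_Y$ and a uniform positive lower bound on the discrete set of positive values of $(K_Y+D_Y)$ on the finite set of extremal rays that become $(K_Y+G_Y)$-nonpositive as $G$ varies in the $\delta$-ball, this forces $(K_Y+D_Y)\cdot R=0$. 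The technical subtlety is making the length bound and the positive lower bound uniform over all extremal rays potentially in $F_G$ for varying $G$; this requires applying the cone theorem to a suitable pair with an ample perturbation to discretize the relevant set of extremal rays. Once $F_G\subseteq F_D\subseteq F_\Delta$ is in hand, we obtain $F_t=F_G$ for all $t\in(0,1]$, so the ample model $Z_{L_t}$ is the constant $Z_G$.
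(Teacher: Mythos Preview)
Your reduction to a single model $Y$ and the face decomposition $F_t=F_G\cap F_\Delta$ for $t\in(0,1)$ are fine, and the inclusion $F_D\subseteq F_\Delta$ via the rational envelope is essentially correct (though you should argue with actual curves $C$, for which $D_{i,Y}\cdot C\in\Q$, rather than arbitrary $\gamma\in N_1$). The real problem is the inclusion $F_G\subseteq F_D$. Your length-of-extremal-rays sketch does not go through: the classes in $F_G$ are $(K_Y+G_Y)$-\emph{trivial}, not negative, so the cone theorem gives no length bound on them; perturbing to $K_Y+G_Y-\varepsilon A$ does not produce a log canonical pair to which the cone theorem applies; and even granting a bound of the form $A\cdot C\le 2d/\varepsilon$, there is no reason the positive values of $(K_Y+D_Y)\cdot C$ on such curves are discrete or bounded below. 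Finally, the model $Y$ itself depends on $G$ (it is the output of a $(K_W+G_W)$-MMP), so any ``uniform'' choice of $\delta$ based on data of $Y$ is circular.

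The paper avoids this entirely. It first observes that it suffices to prove constancy for $t\in(0,1)$: writing $G=\tfrac12(G'+\Delta)$ with $G'=2G-\Delta\in\cP$, the half-open segment from $\Delta$ to $G$ sits inside the open segment from $\Delta$ to $G'$. For $t\in(0,1)$ it then gives a direct construction: with $g\colon Y\to Z_0$ the ample model of $K_Y+\Delta_Y$ over $B$ and $h\colon Y\to Z$ the ample model of $K_Y+G_Y$ over $Z_0$, one has $K_Y+tG_Y+(1-t)\Delta_Y\sim_\R h^*\bigl(tH_Z+(1-t)i^*H\bigr)$, and a short computation using that $H$ is ample over $B$ and $H_Z$ is ample over $Z_0$ (and nef over $B$) shows this is ample over $B$ for all $t\in(0,1)$. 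No face analysis or extremal-ray bound is needed. If you want to salvage your approach, the honest way to prove $F_G\subseteq F_\Delta$ is exactly this factorization $Y\to Z\to Z_0$ together with the $(0,1]$-to-$(0,1)$ trick; there does not seem to be a shortcut via lengths of extremal rays.
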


\begin{proof}
First, we claim that we only need to show that there exists a positive real number $\delta$ such that for any $\Delta,G\in\cP$ as in the proposition, $(X, tG+(1-t)\Delta)$ has the same ample model over $B$ for every $t\in(0,1)$. Assume the existence of such a $\delta$. Since $\cP$ is a rational polytope and $\Delta\in\cP$ is in the rational envelope of $D$, possibly after replacing $\delta$ by some smaller $\delta'>0$, we may further assume that for any $G\in\cP$ satisfying $\norm{D-G}<\frac{\delta}{3}$ and any $\Delta\in\cP$ satisfying $\norm{D-\Delta}<\frac{\delta}{3}$, we have $G'=2G-\Delta\in\cP$ (in other words, $G=\frac{1}{2}(G'+\Delta)$) and $\norm{D-G'}<\delta$. Any convex combination $tG+(1-t)\Delta$ where $t\in (0,1]$ is also a convex combination $sG'+(1-s)\Delta$ for some $s\in (0,1)$ (in fact $s=\frac{t}{2}$). By the choice of $\delta$, we know that the ample model of $(X,sG'+(1-s)\Delta)$ over $B$ is independent of $s\in (0,1)$, hence the ample model of $(X, tG+(1-t)\Delta)$ over $B$ is independent of $t\in (0,1]$ whenever $\norm{D-\Delta},\norm{D-G}<\frac{\delta}{3}$.

We may also assume that $X$ is smooth. In fact, if $\pi\colon X'\to X$ is a log resolution of $(X,D_1+\dots+D_n)$ and let $E$ be the sum of the $\pi$-exceptional divisors, we may consider the rational polytope
\[\cP':=\{\Delta'=\Delta_{X'}+E\,|\,\Delta\in \cP\}.\]
For any $\Delta\in \cP$, since $(X,\Delta)$ is log canonical, we have $(X',\Delta')$ is also log canonical  where $\Delta'=\Delta_{X'}+E$ is as above and $K_{X'}+\Delta'\ge \pi^*(K_X+\Delta)$. Hence by Lemma \ref{lem:compare gmm existence on bir model}, $(X',\Delta')$ also has a good minimal model over $B$ and its ample model over $B$ is the same as the ample model of $(X,\Delta)$ over $B$. Thus in order to prove the proposition, we may replace $X$ (resp. $D$, resp. $\cP$) by $X'$ (resp. $D'$, resp. $\cP'$) and assume that $X$ is smooth.

We now proceed to prove that there exists a positive real number $\delta$ such that for any $\Delta,G\in\cP$ as in the proposition, $(X, tG+(1-t)\Delta)$ has the same ample model over $B$ for every $t\in(0,1)$. Let $\delta$ be the positive real number given by Lemma \ref{lem:MMP in nbhd give weak lc model of Q-divisor}. Then as long as $\norm{D-G},\norm{D-\Delta}<\delta$ and $\Delta$ is in the rational envelope of $D$, there exists a $(K_X+G)$-MMP $X\dashrightarrow Y$ over $B$ such that $Y$ is also a weak log canonical model of $(X,\Delta)$ over $B$. 
\[
    \begin{tikzcd}
       && Z\arrow[d, "i"]\\
			(X,\Delta) \arrow[r, dashed]  & (Y,\Delta_Y)  \arrow[r, "g"]\arrow[ur, "h"]& Z_0
    \end{tikzcd}
\]
Since $(X,\Delta)$ has a good minimal model over $B$, we can take the ample model $g\colon Y\to Z_0$ of $(Y,\Delta_Y)$ over $B$ such that
$$K_Y+\Delta_Y\sim_{\R}g^*H$$
where $H$ is an $\R$-Cartier $\R$-divisor on $Z_0$ that is ample over $B$. Similarly, we know that $K_Y+G_Y$ is semiample over $B$, hence it is also semiample over $Z_0$ and we take an ample model $h\colon Y\to Z$ of $(Y,G_Y)$ over $Z_0$ such that
$$K_Y+G_Y\sim_{\R}h^*H_Z$$
where $H_Z$ is an $\R$-Cartier $\R$-divisor on $Z$ that is ample over $Z_0$. Since $Y$ is a weak log canonical model of both $(X,\Delta)$ and $(X,G)$ over $B$, it is also a weak log canonical model of $(X,tG+(1-t)\Delta)$ over $B$ for any $t\in[0,1]$ by the definition of weak log canonical models. Since $H$ is ample over $B$ and $H_Z$ is ample over $Z_0$, for any $t\in (0,1)$ we may choose some sufficiently small $\varepsilon\in (0,t)$ such that $\varepsilon H_Z+(1-t)i^*H$ is ample over $B$. Note also that $H_Z$ is nef over $B$ since $K_Y+G_Y$ is nef over $B$. It follows that 
\begin{align*}
    K_Y+tG_Y+(1-t)\Delta_Y & = t(K_Y+G_Y)+(1-t)(K_Y+\Delta_Y) \\
    & \sim_{\R} h^*(tH_Z+(1-t)i^*H) \\
    & = h^*((t-\varepsilon)H_Z+(\varepsilon H_Z+(1-t)i^*H)) \\
    & = h^*(\mathrm{nef}/B+\mathrm{ample}/B) = h^*(\mathrm{ample}/B).
\end{align*}
Therefore, $h\colon Y\to Z$ is the common ample model of $(Y,tG_Y+(1-t)\Delta_Y)$ over $B$ for all $t\in(0,1)$ and we are done.
\end{proof}

\begin{proof}[Proof of Theorem \ref{thm:chamber decomp by ample model}]
Since $\cP$ is compact, we only need to prove the result locally around any $D\in\cP$. We proceed by induction on $\dim\cP$. The base case when $\dim\cP=0$ is clear. In the general case, by Proposition \ref{prop:local cone structure}, there exist a positive real number $\delta$ and a $\Q$-divisor $\Delta\in\cP$ with $\norm{D-\Delta}<\delta$ such that for any $G\in\cP$ satisfying $\norm{D-G}<\delta$, $(X, tG+(1-t)\Delta)$ has the same ample model over $B$ for every $t\in(0,1]$. Since we are allowed to work locally around $D\in\cP$, we only need to prove the result for any rational polytope $\cQ\subseteq\cP$ containing $D$ and $\Delta$ such that for any $Q\in\cQ$, $\norm{D-Q}<\delta$. For any $Q\in\cQ$ such that $Q\neq \Delta$, there exists $Q'\in\cQ$ such that $Q'$ is on the boundary of $\cQ$ and $Q$ is in the line segment connecting $Q'$ and $\Delta$. Since $\norm{D-Q'}<\delta$, $(X,Q)$ and $(X,Q')$ have the same ample model over $B$. Since the boundary $\partial \cQ$ of $\cQ$ consists of finitely many rational polytopes of lower dimension, we deduce from the induction hypothesis that there is a finite rational polyhedral decomposition of $\partial \cQ$ corresponding to the finitely many ample models of $(X,Q')$ over $B$ (as $Q'$ varies in $\partial \cQ$). As $\Delta$ is a $\Q$-divisor, we get an induced finite rational polyhedral decomposition of $\cQ$ by taking cones with vertex $\Delta$. By construction, the ample model over $B$ stays the same in each chamber.
\end{proof}

By the same argument as in this section, we can deduce a variant of Theorem \ref{thm:chamber decomp by ample model}. We omit the proof since it is almost the same as the proof of Theorem \ref{thm:chamber decomp by ample model}.

\begin{thm}\label{thm:chamber decomp by ample model irrational}
Notation as above. Let $\cP\subseteq V$ be a polytope such that $(X, D)$ is a log canonical pair and has a good minimal model over $B$ for every $D\in\cP$. 

Then there exist a finite polyhedral decomposition $\cP=\cup_{i=1}^p \cP_i$, together with finitely many rational maps $\psi_j\colon X\dashrightarrow Z_j$ over $B$, $1\le j\le q$, such that: for any $1\le i\le p$, there exists some $1\le j\le q$ such that $\psi_j\colon X\dashrightarrow Z_j$ is the ample model of $(X, D)$ over $B$ for any $D\in \cP_i^\circ$.
\end{thm}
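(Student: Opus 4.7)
The plan is to mimic the proof of Theorem \ref{thm:chamber decomp by ample model} almost verbatim, with the observation that the rationality of $\cP$ in that proof was used in exactly one place: to ensure that the cone construction in the inductive step produces \emph{rational} polytopes, by choosing a rational apex $\Delta$ near $D$. Since here we only need a finite, not necessarily rational, polyhedral decomposition, we can dispense with this requirement and take the apex of the cone to be $D$ itself.

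The first step is to establish an analogue of Proposition \ref{prop:local cone structure} without any rationality condition: for each $D \in \cP$, there exists $\delta > 0$ such that for any $G \in \cP$ with $\norm{D - G} < \delta$, the ample model of $(X, tG + (1-t)D)$ over $B$ is the same for all $t \in (0, 1]$. The argument is identical to the proof of Proposition \ref{prop:local cone structure} with $\Delta$ replaced throughout by $D$, except that we invoke Lemma \ref{lem:MMP in small nbhd give weak lc model} (which has no rationality hypothesis) in place of Lemma \ref{lem:MMP in nbhd give weak lc model of Q-divisor}; the remaining computation with $g \colon Y \to Z_0$ and $h \colon Y \to Z$, and the production of an ample-over-$B$ divisor of the form $(t-\varepsilon)H_Z + (\varepsilon H_Z + (1-t)i^*H)$, carries through verbatim. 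The initial reduction trick (reflecting $G$ through $D$ via $G' = 2G - D$ to absorb the endpoint $t = 1$) still works as long as $D$ lies in the relative interior of $\cP$, in which case $G' \in \cP$ for $G$ sufficiently close to $D$.

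One then concludes by induction on $\dim \cP$, with the base case $\dim \cP = 0$ trivial. For the inductive step, the induction hypothesis applied to each proper face of $\cP$ produces a finite polyhedral decomposition of $\partial \cP$ with constant ample model over $B$ on each chamber. By compactness of $\cP$, it suffices to produce, locally around any $D$ in the relative interior of $\cP$, a finite polyhedral decomposition of a neighborhood $\cQ \subseteq \cP$ of $D$; choosing $\cQ$ of diameter less than $\delta$ with $D$ in its relative interior and coning the (already constructed) decomposition of $\partial \cQ$ with apex $D$ yields such a decomposition, and the local cone structure guarantees that the ample model is constant on each chamber. The only conceptual point worth flagging is that the reduction trick at the end of Proposition \ref{prop:local cone structure} requires $D$ to be in the relative interior of $\cP$, which is precisely why the boundary of $\cP$ must be handled by the outer induction on dimension; this is automatic from the polyhedral structure, and no new difficulty arises beyond the proof of Theorem \ref{thm:chamber decomp by ample model}.
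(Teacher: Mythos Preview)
Your approach is the paper's own: the paper simply notes that the argument of Theorem \ref{thm:chamber decomp by ample model} carries over, and you have correctly identified that the only use of rationality there is in choosing a rational apex $\Delta$ near $D$, which can be replaced by $D$ itself (invoking Lemma \ref{lem:MMP in small nbhd give weak lc model} in place of Lemma \ref{lem:MMP in nbhd give weak lc model of Q-divisor}) when the chambers need not be rational.

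There is, however, a genuine gap in your compactness step. You restrict the local cone argument to $D$ in the relative interior of $\cP$ and say the boundary is ``handled by the outer induction on dimension''. But induction on $\dim\cP$ only decomposes the faces of $\partial\cP$ themselves; it does not produce a decomposition of a \emph{neighborhood} of $\partial\cP$ inside $\cP$. Since the relative interior of $\cP$ is not compact, nothing guarantees that the neighborhoods $\cQ_D$ for interior $D$ cover all of $\cP$, so the finite-subcover argument breaks down near the boundary.

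The fix is to observe that the reflection $G'=2G-D$ actually lies in $\cP$ for \emph{every} $D\in\cP$, boundary points included: for each defining inequality $\ell\ge c$ of $\cP$, either $\ell(D)=c$, in which case $\ell(G')=2\ell(G)-c\ge c$ since $G\in\cP$, or $\ell(D)>c$, in which case $\ell(G')$ is close to $\ell(D)>c$ for $G$ sufficiently near $D$. Hence your analogue of Proposition \ref{prop:local cone structure} (with $\Delta=D$) is valid at every point of $\cP$, and one may work locally around an arbitrary $D\in\cP$ exactly as in the proof of Theorem \ref{thm:chamber decomp by ample model}, with the induction on dimension applied to $\partial\cQ$ rather than to $\partial\cP$.
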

 
\begin{rem}\label{rem:chamber decomp by ample model non-quasi-projective}
We remark that Theorems \ref{thm:chamber decomp by ample model} and \ref{thm:chamber decomp by ample model irrational} also work when $X$ and $B$ are normal varieties but not necessarily quasi-projective. We explain this briefly. We cover $B$ by finitely many quasi-projective open subsets $B_i$ and restrict the log canonical pairs given by the (rational) polytope $\cP$ to $X_i:=f^{-1}(B_i)$ for each $i$ where $X_i$ is quasi-projective (since $f$ is projective). We can apply Theorems \ref{thm:chamber decomp by ample model} and \ref{thm:chamber decomp by ample model irrational} to these pairs over $B_i$ and get a finite (rational) polyhedral decomposition of $\cP$ for each $i$. A common refinement of these decompositions gives the desired decomposition of $\cP$ regarding the log canonical pairs over $B$.
\end{rem}

\section{Wall-crossing morphisms}\label{sec:wall crossing}

In this section, we study the wall-crossing morphisms between (the reduced closure of) the images of the reduction morphisms and prove our main theorems. The results of this section rely on, among many others things, the MMP for locally stable families and the finiteness of ample models, see Sections \ref{sec:MMP results}, \ref{sec:MMP locally stable} and \ref{sec:Finiteness of ample models}.

We fix the following notation. Let $f\colon (X;D_1,\dots,D_n)\to B$ be an $n$-marked family over a reduced scheme. For any coefficient vector $\vv=(v_1,\dots,v_n)$, we define $\vv D:=\sum^{n}_{i=1} v_i D_i$. If $B$ is a normal variety and we let $\cP\subseteq\R_{\ge 0}^n$ be a polytope such that for every $\vb\in\cP$, the family $(X,\vb D)\to B$ is locally stable and its generic fiber has a semi-log-canonical model (see Definition \ref{defi:slc model}), then by Theorem \ref{thm:slc model exist}, for every $\vb\in \cP$, the family $(X,\vb D)\to B$ has an slc model $f_{\vb}\colon (Z_{\vb}, \vb\Delta_{\vb})\to B$ which is a stable family and thus induces a unique morphism $\Phi_{\vb}\colon B\to\cK_{\vb}$. We define $\cM_\vb = \overline{\Phi_\vb (B)}$ as the reduced closure of the image of $\Phi_\vb$ and $\cN_\vb$ as the normalization of $\cM_\vb$. 

We shall analyze the variation of $\cM_\vb$ as the coefficient vector $\vb$ varies in the polytope $\cP$. First, we prove an auxiliary lemma.

\begin{lemma}\label{lem:isomorphism descends to moduli spaces}
Let $(X;D_1,\dots,D_n)\to B$ be an $n$-marked family over a reduced proper scheme. Assume that for some $\vb,\vb'\in \R_{\ge 0}^n$, $(X,\vb D)\to B$ and $(X,\vb' D)\to B$ are stable families. Then $\cM_{\vb}\cong \cM_{\vb'}$.
\end{lemma}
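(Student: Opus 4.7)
The plan is to use the descent of stable families (Lemma \ref{lem:descend stable family}) to construct mutually inverse morphisms between $\cM_\vb$ and $\cM_{\vb'}$ through the common base $B$. First, since $B$ is reduced and proper, the morphism $\Phi_\vb$ factors as $B \twoheadrightarrow \cM_\vb \hookrightarrow \cK_\vb$, a proper surjection onto the reduced closure followed by a closed embedding, and analogously for $\Phi_{\vb'}$. It suffices to construct a morphism $\psi_{\vb,\vb'}\colon \cM_\vb \to \cM_{\vb'}$ that makes the obvious triangle with $B$ commute. By symmetry, one then obtains a reverse morphism, and both compositions agree with the identity on the scheme-theoretically dense image of $B$; reducedness then forces both compositions to be the identity on the respective stacks, yielding $\cM_\vb \cong \cM_{\vb'}$.

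To construct $\psi_{\vb,\vb'}$, I would apply Lemma \ref{lem:descend stable family} to descend the stable family $(X,\vb'D)\to B$ along the proper surjection $\pi_\vb\colon B \to \cM_\vb$. The key observation is that two geometric points $b_1,b_2\in B$ map to the same point of $\cM_\vb$ precisely when there is a scheme isomorphism $\varphi\colon X_{b_1}\to X_{b_2}$ matching each individual marked divisor $D_{i,b_1}\leftrightarrow D_{i,b_2}$ (those recorded by $\vb$); since $(X,\vb'D)\to B$ is also assumed to be a stable family with the same underlying $n$-marked family, the same $\varphi$ automatically identifies the $\vb'$-stable pairs $(X_{b_1},\vb'D_{b_1})\cong (X_{b_2},\vb'D_{b_2})$. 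More generally, along any scheme-theoretic fiber of $\pi_\vb$, the restricted $\vb$-family is a trivial stable family, and the underlying $n$-marked family thus has no variation, so the restricted $\vb'$-family is trivial as well. The descended stable family on $\cM_\vb$ then produces, through the moduli functoriality of $\cK_{\vb'}$, the desired morphism $\psi_{\vb,\vb'}\colon \cM_\vb \to \cK_{\vb'}$; its image is a closed reduced substack of $\cK_{\vb'}$ containing $\Phi_{\vb'}(B)$, hence equals $\cM_{\vb'}$.

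The main obstacle is the application of Lemma \ref{lem:descend stable family} in this setting: the lemma is phrased for morphisms of normal varieties, whereas $\cM_\vb$ is a priori only a reduced proper Deligne--Mumford stack. I would handle this by passing to an étale atlas of $\cM_\vb$ together with the induced cover of $B$, applying the descent lemma on each chart, and using the representability of the moduli problem (Theorem \ref{thm:moduli of stable pairs}) to glue the descended families; since automorphisms of stable pairs are finite, this étale descent is routine. A minor bookkeeping point is that the equivalence of isomorphism classes of $\vb$- and $\vb'$-stable pairs is read off the common marking, which means one should track which divisors are recorded in each moduli (relative to the convention that zero-coefficient entries are dropped)---but under the blanket hypothesis that both $\vb$-stability and $\vb'$-stability hold, this reduces to a straightforward combinatorial identification.
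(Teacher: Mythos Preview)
Your overall strategy—construct a morphism $\cM_\vb\to\cM_{\vb'}$, then invoke symmetry and density of $\Phi_\vb(B)$ to conclude the two morphisms are mutually inverse—matches the paper. The difference lies in how you produce the morphism, and here your route introduces an unnecessary obstacle.

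You propose to descend the stable family $(X,\vb'D)\to B$ along $\pi_\vb\colon B\to\cM_\vb$ using Lemma~\ref{lem:descend stable family}. As you note, that lemma is stated for proper morphisms between \emph{normal varieties} with \emph{connected fibers}; here $B$ is merely reduced and $\cM_\vb$ is a reduced Deligne--Mumford stack, so neither hypothesis holds. Your suggested fix via \'etale charts does not obviously resolve this: even on a chart, the base need not be normal and the fibers of the induced map from (a component of) $B$ need not be connected. So the descent step, as written, is a genuine gap.

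The paper's argument avoids descent altogether by observing that the descended object already exists tautologically. Since $\cK_\vb$ parameterizes \emph{marked} stable pairs, the universal family $(\cX_\vb;\cD_{\vb,1},\dots,\cD_{\vb,n})\to\cM_\vb$ carries the full $n$-marking, and $(X;D_1,\dots,D_n)\to B$ is its pullback via $\pi_\vb$. One simply changes the coefficients on the universal family to obtain $(\cX_\vb,\vb'\cD_\vb)\to\cM_\vb$; the family $(X,\vb'D)\to B$ is then its pullback by construction. The only thing left to verify is that $(\cX_\vb,\vb'\cD_\vb)\to\cM_\vb$ is itself a stable family over the reduced base $\cM_\vb$. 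This follows because the conditions defining a stable family (flatness, relative Mumford, fiberwise slc, relative ampleness of the log canonical divisor) can all be checked after pullback along the proper surjective morphism $B\to\cM_\vb$, where they hold by hypothesis; the paper refers to \cite{ABIP23}*{Corollary 4.10} for this verification. Once you have this stable family on $\cM_\vb$, moduli functoriality gives $\cM_\vb\to\cK_{\vb'}$, and the rest of your argument goes through.
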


\begin{proof}
We denote by $(\cX_{\vb},\vb\cD_{\vb})\to\cM_{\vb}$ the universal family over $\cM_{\vb}$. Since $B$ is proper and $\cM_{\vb}$ is separated, $B\to\cM_{\vb}$ is proper and surjective. Since $(X,\vb' D)\to B$ is a stable family and $B\to\cM_{\vb}$ is proper and surjective, it is not hard to check that $(\cX_{\vb},\vb'\cD_{\vb})\to\cM_{\vb}$ is a stable family. See the argument in \cite{ABIP23}*{Corollary 4.10}. Thus it induces a morphism $\cM_{\vb}\to\cM_{\vb'}$. Similarly, there also exists a morphism $\cM_{\vb'}\to\cM_{\vb}$. They are inverse to each other by construction.
\end{proof}

The following is the main result of this section.

\begin{thm} \label{thm:wall crossing general setup}
Let $f\colon (X;D_1,\dots,D_n)\to B$ be an $n$-marked family over a normal variety and let $\cP\subseteq\R_{\ge 0}^n$ be a polytope such that for every $\vb\in\cP$, the family $(X,\vb D)\to B$ is locally stable and its generic fiber has a semi-log-canonical model. Then
\begin{enumerate}
    \item For any $\va,\vb\in\cP$ such that the generic fiber of $(X,\va D)\to B$ is stable and $\va\ge \vb$, there exists a unique morphism $r_{\va,\vb}\colon \cN_\va\to \cN_\vb$ making the following diagram commute.
    \[
    \xymatrix{
		B \ar[dr] \ar[d] & \\
		\cN_\va \ar[r]^{r_{\va,\vb}}  & \cN_\vb 
        }
    \]

    \item There exists a finite polyhedral decomposition $\cP=\cup_{i=1}^p\cP_i$ such that for any $1\le i\le p$, the birational map $X\dashrightarrow Z_\vb$ and the underlying $n$-marked family $(Z_{\vb};\Delta_{\vb,1},\ldots,\Delta_{\vb,n})$ are independent of $\vb\in\cP_i^\circ$, and $\cM_\vb\cong \cM_{\vb'}$ for any $\vb,\vb'\in\cP_i^\circ$. Moreover, we can choose $\cP_i$ to be rational if $\cP$ is a rational polytope.
    
    \item For any $1\le i\le p$, we denote $\cM_\vb$ by $\cM_i$ for any $\vb\in \cP_i^\circ$. Assume that $f$ has a normal generic fiber. Then for any $1\le i,j\le p$ such that $\cP_j$ is a face of $\cP_i$, there exists a unique morphism $\alpha_{ij}\colon \cM_i\to \cM_j$ making the following diagram commute. Moreover, for any $1\le i,j,k\le p$ such that $\cP_j$ is a face of $\cP_i$ and $\cP_k$ is a face of $\cP_j$, $\alpha_{jk}\circ\alpha_{ij}=\alpha_{ik}$. The same statement holds for the normalization $\cN_i$ of $\cM_i$ without assuming that $f$ has a normal generic fiber.
    \begin{equation} \label{eq:wall crossing diagram}
    \xymatrix{
		B \ar[dr] \ar[d] & \\
		\cM_i \ar[r]^{\alpha_{ij}}  & \cM_j 
        }
    \end{equation}
\end{enumerate}
\end{thm}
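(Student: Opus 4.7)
The plan is to construct each reduction/wall-crossing morphism by applying the slc model construction to pullbacks of universal families, then use uniqueness to deduce compatibilities. The main technical tools are Theorem \ref{thm:slc model exist} (existence of slc models for locally stable families over normal bases), Theorems \ref{thm:chamber decomp by ample model} and \ref{thm:chamber decomp by ample model irrational} (finiteness of ample models), Lemma \ref{lem:ample model stable} (ample models of locally stable families are stable over reduced bases), and Lemma \ref{lem:isomorphism descends to moduli spaces} (descent of moduli isomorphisms under coefficient change).

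For part (1), I would pull back the universal family over $\cK_\va$ through $\cN_\va \to \cM_\va \hookrightarrow \cK_\va$ to obtain a stable family $(\cX_\va, \va \cD_\va) \to \cN_\va$. Since $\cN_\va$ is normal and $\vb \le \va$, the family $(\cX_\va, \vb \cD_\va) \to \cN_\va$ is locally stable; moreover, its generic fiber inherits an slc model from the generic fiber of $(X,\vb D)\to B$ via the dominant map $B \to \cN_\va$. Applying Theorem \ref{thm:slc model exist} then produces a stable family $(\cY, \vb \cE) \to \cN_\va$, which induces a morphism $\cN_\va \to \cK_\vb$. The composition $B \to \cN_\va \to \cK_\vb$ agrees with $\Phi_\vb$ because slc model formation commutes with normal base change (a consequence of Theorem \ref{thm:MMP locally stable family} together with Lemma \ref{lem:ample model commutes with base change}); thus the image lies in $\cM_\vb$ and factors through $\cN_\vb$, yielding $r_{\va,\vb}$. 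Uniqueness follows from the dominance of $B \to \cN_\va$, and the same compatibility argument gives $r_{\vb,\vc}\circ r_{\va,\vb} = r_{\va,\vc}$.

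For part (2), I would apply Theorem \ref{thm:chamber decomp by ample model irrational} (or Theorem \ref{thm:chamber decomp by ample model} in the rational case) to a log resolution or dlt model of an appropriate log canonical pair dominating $(X, \vb D)$ for all $\vb \in \cP$, producing a finite polyhedral decomposition $\cP = \bigcup_i \cP_i$ such that the ample model of $K_{X/B} + \vb D$ over $B$ is constant on each open chamber $\cP_i^\circ$. Within $\cP_i^\circ$, the variety $Z_\vb$ and the marking divisors $\Delta_{\vb, j}$ (being strict transforms of $D_j$) are independent of $\vb$, and each $(Z_\vb, \vb \Delta_\vb) \to B$ is stable by Lemma \ref{lem:ample model stable}. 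The isomorphism $\cM_\vb \cong \cM_{\vb'}$ for $\vb, \vb' \in \cP_i^\circ$ then follows from Lemma \ref{lem:isomorphism descends to moduli spaces} applied to the universal $n$-marked family over the proper reduced scheme $\cM_\vb$, since both coefficient vectors produce stable families on the same underlying $n$-marked data.

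For part (3), given $\cP_j$ a face of $\cP_i$ and $\vb \in \cP_j^\circ$, the construction of $\alpha_{ij}\colon \cN_i \to \cN_j$ proceeds exactly as in part (1), with $B$ replaced by $\cN_i$ and the original $n$-marked family replaced by the pullback of the universal $n$-marked family from $\cM_i$; transitivity $\alpha_{jk}\circ\alpha_{ij}=\alpha_{ik}$ then follows from uniqueness. For the version on $\cM_i$ under the assumption that $f$ has normal generic fiber, the key observation is that the stable family constructed over $\cN_i$ via the slc model actually descends to $\cM_i$: normality of the generic fiber guarantees that geometric points of $\cN_i$ lying over a common point of $\cM_i$ parameterize the same stable pair, hence the same slc model (by uniqueness of slc models), so the morphism $\cN_i \to \cM_\vb$ is constant on fibers of the finite surjection $\cN_i \to \cM_i$ and descends to $\alpha_{ij}\colon \cM_i \to \cM_j$ by the universal property of reduced schemes. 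The hard part will be precisely this descent step in part (3), which is where the normal-generic-fiber hypothesis is essential; the availability of Lemma \ref{lem:ample model stable} over an arbitrary reduced base (as opposed to a seminormal one), whose proof ultimately rests on the invariance of plurigenera in Lemma \ref{lem:h^0 constant smooth base}, is what allows our construction to improve upon \cite{ABIP23}, where the wall-crossing morphisms could only be defined on seminormalizations.
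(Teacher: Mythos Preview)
Your outline for part (1) and for the $\cN_i$ version of part (3) is essentially the paper's approach, though note that $(\cX_\va, \vb\cD_\va)\to \cN_\va$ need not be \emph{locally stable} (the $\R$-Cartier condition can fail); Theorem~\ref{thm:slc model exist} applies nonetheless because it only requires $0\le \vb\cD_\va \le \va\cD_\va$.

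There are two genuine gaps.

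\textbf{Part (2), the isomorphism $\cM_\vb\cong\cM_{\vb'}$.} You propose to apply Lemma~\ref{lem:isomorphism descends to moduli spaces} to the universal $n$-marked family over $\cM_\vb$. This requires knowing that $(\cX_\vb,\vb'\cD_\vb)\to \cM_\vb$ is a \emph{stable} family. Over the image $\Phi_\vb(B)$ this holds because $Z_\vb=Z_{\vb'}$ on $B$, but $\cM_\vb$ is the \emph{closure} of that image, and nothing you have said controls the boundary points: a stable pair for coefficient $\vb$ appearing in $\cM_\vb\setminus\Phi_\vb(B)$ has no reason to be stable for $\vb'$. The paper confronts this explicitly: when $B$ is proper the image is already closed and your argument works, but when $B$ is not proper one must first compactify. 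The paper does this by weak semistable reduction (Abramovich--Karu), extending a log resolution of the normalized family to a locally stable family over a proper normal $\oB$, and then running the finiteness-of-ample-models argument over $\oB$. This step is the main technical content of part (2) and is missing from your sketch.

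\textbf{Part (3), descent to $\cM_i$.} Your proposed descent---constructing $\cN_i\to\cM_j$ and arguing it is constant on fibers of the normalization $\cN_i\to\cM_i$, then invoking a ``universal property of reduced schemes''---does not work. Constancy on fibers of a finite morphism does not guarantee descent to a non-normal target (consider the identity $\bA^1\to\bA^1$ over the normalization of a cusp). The paper instead constructs the morphism \emph{directly} over $\cM_i$: for $\vb_j\in\cP_j^\circ\subseteq\cP_i$, the divisor $K_{\cX_i/\cM_i}+\vb_j\cD_i$ is nef over $\cM_i$ (as a limit of relatively ample divisors), and the normal-generic-fiber hypothesis allows one to invoke Lemma~\ref{lem:ample model commutes with base change} to conclude semiampleness over $\cM_i$; then Lemma~\ref{lem:ample model stable} (crucially valid over a merely reduced base) shows the ample model is a stable family over $\cM_i$, giving $\alpha_{ij}$. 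Your closing remark about Lemma~\ref{lem:ample model stable} correctly identifies the key lemma, but you are not deploying it in the right place.
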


We remark that when the base $B$ is proper, the statement is essentially a formal consequence of the results in the previous sections. The main missing part is the finite wall-and-chamber decomposition corresponding to $\cM_\vb$ when $B$ is not proper. We will reduce this to the proper case by applying weak semistable reduction.

\begin{proof}
Since $B$ is normal, the morphism $B\to\cM_{\vb}$ lifts to a unique morphism $B\to\cN_{\vb}$ for every $\vb\in\cP$. Note that $B$ dominates $\cN_{\vb}$. Since we also have that $\cN_{\va}$ and $\cN_{\vb}$ are normal and separated, the morphism $r_{\va,\vb}$ is unique. Let $(\cX_{\cN_{\va}},\va\cD_{\cN_{\va}})\to\cN_{\va}$ be the universal family over $\cN_{\va}$. Since the generic fiber of $(X,\va D)\to B$ is stable, the pullback of $(\cX_{\cN_{\va}},\va\cD_{\cN_{\va}})\to\cN_{\va}$ via $B\to\cN_{\va}$ is isomorphic to $(X,\va D)\to B$ over the generic point of $B$. Since $\cN_{\va}$ is normal and the generic fiber of $(X,\vb D)\to B$ has an slc model, $(\cX_{\cN_{\va}},\vb\cD_{\cN_{\va}})\to\cN_{\va}$ has an slc model by Theorem \ref{thm:slc model exist} which induces a unique morphism $r_{\va,\vb}\colon \cN_\va\to\cN_\vb$. The composition of $r_{\va,\vb}$ and $B\to\cN_{\va}$ is clearly $B\to\cN_\vb$. This proves statement (1).

For the statement (2), we divide its proof in two parts. First we construct a polyhedral decomposition corresponding to the birational map $X\dashrightarrow Z_\vb$ to the slc model. Observe that we may assume that $B$ is smooth. To see this, let $\tB\to B$ be a resolution of singularities. For every $\vb\in\cP$, let $(\tX,\vb\tD)\to \tB$ be the locally stable family obtained by base change. It has an slc model $\tf_{\vb}\colon (\tZ_{\vb}, \vb\tDelta_{\vb})\to \tB$ by Theorem \ref{thm:slc model exist} and the assumptions. As in the proof of Theorem \ref{thm:slc model exist}, $\tf_{\vb}\colon (\tZ_{\vb}, \vb\tDelta_{\vb})\to \tB$ descends to $f_{\vb}\colon (Z_{\vb}, \vb\Delta_{\vb})\to B$ for every $\vb\in\cP$. If there exists a finite (rational) polyhedral decomposition $\cP=\cup_{i=1}^p\cP_i$ such that for any $1\le i\le p$, the birational map $\tX\dashrightarrow \tZ_\vb$ is independent of $\vb\in\cP_i^\circ$, then the birational map $X\dashrightarrow Z_\vb$ is also independent of $\vb\in\cP_i^\circ$. Thus we may assume $B$ is smooth.

Since $B$ is smooth, we deduce that $(X,\vb D)$ is slc for every $\vb\in\cP$ by Theorem \ref{thm:locally stable smooth base}. Let $(X^\nu,\Gamma+\vb D^\nu)\to B$ be the normalization of $(X,\vb D)\to B$ where $\Gamma$ is the conductor and $D^\nu$ is the strict transform of $D$. Note that the induced involution $\tau$ on $\Gamma^\nu$ is determined by the normalization $X^\nu\to X$ and is independent of $\vb\in\cP$. By definition, the normalization $(Z^\nu_{\vb},\Gamma_{\vb}+\vb\Delta^\nu_{\vb})$ of $(Z_{\vb},\vb \Delta_{\vb})$ is the log canonical model of $(X^\nu,\Gamma+\vb D^\nu)$ over $B$. By applying Theorems \ref{thm:chamber decomp by ample model} and \ref{thm:chamber decomp by ample model irrational} to the log canonical pairs $(X^\nu,\Gamma+\vb D^\nu)$ over $B$ where $\vb$ varies in $\cP$, we get a finite (rational) polyhedral decomposition $\cP=\cup_{i=1}^p\cP_i$ such that for any $1\le i\le p$, the birational contraction $X^\nu\dashrightarrow Z^\nu_{\vb}$ is independent of $\vb\in\cP_i^\circ$. Thus the induced regular involution $\tau_\vb$ on the normalization $\Gamma^\nu_{\vb}$ of the conductor $\Gamma_{\vb}$ is also independent of $\vb\in\cP_i^\circ$. By \cite{Kol13}*{Proposition 5.3}, we conclude that for any $1\le i\le p$, the birational map $X\dashrightarrow Z_\vb$ is independent of $\vb\in\cP_i^\circ$. Note that this also implies that the underlying $n$-marked family $(Z_{\vb};\Delta_{\vb,1},\ldots,\Delta_{\vb,n})$ is independent of $\vb\in\cP_i^\circ$. In particular, if $B$ is also proper, then by Lemma \ref{lem:isomorphism descends to moduli spaces}, we obtain $\cM_\vb \cong \cM_{\vb'}$ for any $\vb,\vb'\in\cP_i^\circ$ and therefore statement (2) holds in this case.

In general, we prove that there exists a finite (rational) polyhedral decomposition $\cP=\cup_{i=1}^q\cQ_i$ such that for any $1\le i\le q$, $\cM_\vb\cong \cM_{\vb'}$ for any $\vb,\vb'\in\cQ_i^\circ$. We will reduce it to the case when the base is proper. We may freely shrink $B$ as needed since it will not change $\cM_\vb$. In particular, we may assume that $B$ is smooth. We continue to use the same notation as in the paragraph above. Let $T:=(\sum^{n}_{i=1} D_i)_{\red}$. Let $Y\to X^\nu$ be a log resolution of $(X^\nu,\Gamma+T^\nu)$ and $E_Y$ the sum of its exceptional divisors. Then $(Y,\Gamma_Y+T^\nu_Y+E_Y)$ is log smooth. By shrinking $B$, we can assume that $(Y,\Gamma_Y+T^\nu_Y+E_Y)$ is log smooth over $B$, and thus $(Y,\Gamma_Y+T^\nu_Y+E_Y)\to B$ is locally stable (note that $\Gamma_Y+T^\nu_Y+E_Y$ is a reduced divisor). By Abramovich--Karu's weak semistable reduction theorem \cite{AK00} as used in the proof of \cite{Kol23}*{Theorem 4.59}, there exist a projective generically finite surjective morphism $V\to B$ and a compactification $V\to\oV$ such that the pullback of $(Y,\Gamma_Y+T^\nu_Y+E_Y)\to B$ via $V\to B$ extends to a locally stable family over $\oV$ (in fact one can make $\oV$ smooth and the family toroidal, but we will not need this fact here). We can assume that $V\to B$ is \'etale by shrinking $B$ further. Then the formation of the slc model of $(X,\vb D)\to B$ commutes with the base change via $V\to B$ for every $\vb\in\cP$. Thus $\cM_\vb$ will not change if we replace $B$ by $V$ and pull back everything to $V$. Normalize $\oV$ if necessary, we may assume that $B$ has a normal compactification $\oB$ such that $(Y,\Gamma_Y+T^\nu_Y+E_Y)\to B$ extends to a locally stable family $(\oY,\Gamma_{\oY}+T^\nu_{\oY}+E_{\oY})\to \oB$ with normal generic fiber. By applying Lemma \ref{lem:small Q-Cartier modification over normal base} finitely many times, we can assume that $K_{\oY/\oB}+\Gamma_{\oY}+\vb D^\nu_{\oY}+E_{\oY}$ is $\R$-Cartier for every $\vb\in\cP$. Since $(X,\vb D)\to B$ is locally stable, we have $\vb D\le T$ and thus $\vb D^\nu_{\oY}\le T^\nu_{\oY}$. Thus $(\oY,\Gamma_{\oY}+\vb D^\nu_{\oY}+E_{\oY})\to \oB$ is locally stable for every $\vb\in\cP$. The situation is partly summarized in the following diagram.
\[
    \xymatrix@C+2pc{
         (Y,\Gamma_Y+\vb D^\nu_Y+E_Y) \ar[r]^-{\txt{\tiny log}}_-{\txt{\tiny resolution}} & (X^\nu,\Gamma+\vb D^\nu) \ar[r]^-{\txt{\tiny normalization}} & (X,\vb D)
		}
\]
Since $(X^\nu,\Gamma+\vb D^\nu)$ is log canonical by Theorem \ref{thm:locally stable smooth base}, the log canonical model of $(Y,\Gamma_Y+\vb D^\nu_Y+E_Y)$ over $B$ is the same as that of $(X^\nu,\Gamma+\vb D^\nu)$ by Lemma \ref{lem:compare gmm existence on bir model}, the latter being $(Z^\nu_{\vb}, \Gamma_{\vb}+\vb\Delta^\nu_{\vb})$. Thus $(\oY,\Gamma_{\oY}+\vb D^\nu_{\oY}+E_{\oY})$ also has a log canonical model over $\oB$ by Theorem \ref{thm:slc model exist}, and it is a stable family over $\oB$ (by definition) that extends the stable family $(Z^\nu_{\vb}, \Gamma_{\vb}+\vb\Delta^\nu_{\vb})\to B$. Denote by $(\oZ^\nu_{\vb},\oGamma_{\vb}+\vb\oDelta^\nu_{\vb})\to\oB$ the extended family. By Corollary \ref{cor:extend family}, the stable family $(Z_{\vb},\vb \Delta_{\vb})\to B$ also extends to some stable family $(\oZ_{\vb},\vb \oDelta_{\vb})\to \oB$, and the normalization of $\oZ_{\vb}$ is exactly $\oZ^\nu_{\vb}$. Since $\oB$ is proper, from the already established case we see that there is a finite (rational) polyhedral decomposition $\cP=\cup_{i=1}^q\cQ_i$ such that for any $1\le i\le q$, the birational contraction $\oY\dashrightarrow\oZ^\nu_{\vb}$ is independent of $\vb\in\cQ_i^\circ$. In particular, the birational involution $\tau_\vb$ on the normalization $\oGamma^\nu_{\vb}$ of the conductor $\oGamma_{\vb}$ is also independent of $\vb\in\cQ_i^\circ$. By \cite{Kol13}*{Proposition 5.3}, this implies that the birational map $\oY\dashrightarrow \oZ_{\vb}$ is independent of $\vb\in\cQ_i^\circ$ as well. Thus the underlying $n$-marked family $(\oZ_{\vb};\oDelta_{\vb,1},\ldots,\oDelta_{\vb,n})$ is independent of $\vb\in\cQ_i^\circ$ for any $1\le i\le q$. On the other hand, the stable family $(\oZ_{\vb}, \vb\oDelta_{\vb})\to\oB$ induces a unique morphism $\oPhi_{\vb}\colon\oB\to\cK_{\vb}$ that extends $\Phi_{\vb}\colon B\to\cK_{\vb}$. Thus $\cM_\vb$ is the reduced image of $\oPhi_{\vb}$ since $\oB$ is proper. By Lemma \ref{lem:isomorphism descends to moduli spaces}, we get $\cM_{\vb}\cong \cM_{\vb'}$ for any $1\le i\le q$ and any $\vb,\vb'\in\cQ_i^\circ$. A common refinement of the decompositions $\cP=\cup_{i=1}^p\cP_i$ and $\cP=\cup_{i=1}^q\cQ_i$ gives the desired finite (rational) polyhedral decomposition of $\cP$ in statement (2).

Finally, we prove statement (3). For any $1\le i\le p$, we denote the underlying $n$-marked families $(\cX_{\vb},\cD_{\vb})\to\cM_{\vb}$ by $(\cX_i,\cD_i)\to\cM_i$, $(\cX_{\cN_{\vb}},\cD_{\cN_{\vb}})\to\cN_{\vb}$ by $(\cX_{\cN_i},\cD_{\cN_i})\to\cN_i$ and $(Z_{\vb}, \Delta_{\vb})\to B$ by $(Z_i, \Delta_i)\to B$ for any $\vb\in\cP_i^\circ$. This notation is well-defined by statement (2). Since $\cM_i$ and $\cM_j$ are reduced and separated and $B$ dominates $\cM_i$, we see that $\alpha_{ij}$ is unique and the diagram \eqref{eq:wall crossing diagram} commutes if it commutes over an open set of $B$. Thus to construct $\alpha_{ij}$, we may shrink $B$ and assume that $B$ is smooth. We first prove statement (3) for $\cM_i$ assuming that the generic fiber of $f$ is normal. We deduce that $X$ is normal. Since $(Z_i, \vb\Delta_i)$ is the log canonical model of $(X,\vb D)$ over $B$ for every $\vb\in\cP_i^\circ$, $(Z_i, \vb\Delta_i)$ is a weak log canonical model of $(X,\vb D)$ over $B$ for every $\vb\in\cP_i$ by definition. Since $(X,\vb D)$ has a good minimal model over $B$, we deduce that $K_{Z_i/B}+\vb\Delta_i$ is semiample over $B$ for every $\vb\in\cP_i$, hence $K_{\cX_{i}/\cM_i}+\vb\cD_i$ is semiample over the generic point of $\cM_i$. Since $(\cX_{i},\vb\cD_{i})\to\cM_{i}$ is stable for every $\vb\in\cP_i^\circ$, we see that $(\cX_{i},\vb\cD_{i})\to\cM_{i}$ is locally stable, and $K_{\cX_{i}/\cM_i}+\vb\cD_i$ is nef over $\cM_{i}$ for every $\vb\in\cP_i$. By Lemma \ref{lem:ample model commutes with base change}, this further implies that $K_{\cX_{i}/\cM_i}+\vb\cD_i$ is semiample over $\cM_{i}$. Let $\vb_j$ be any point in $\cP_j^\circ\subseteq\cP_i$. By Lemma \ref{lem:ample model stable}, the log canonical model of $(\cX_{i},\vb_j\cD_{i})\to\cM_{i}$ is a stable family and induces a morphism $\alpha_{ij}\colon\cM_i\to\cM_j$. By construction and Lemma \ref{lem:ample model commutes with base change}, the composition of $\alpha_{ij}$ and $B\to\cM_i$ is $B\to\cM_j$. For any $1\le i,j,k\le p$ such that $\cP_j$ is a face of $\cP_i$ and $\cP_k$ is a face of $\cP_j$, the compositions of $\alpha_{ik}$ or $\alpha_{jk}\circ\alpha_{ij}$ with $B\to\cM_i$ both equal $B\to\cM_k$, thus the uniqueness of the diagram \eqref{eq:wall crossing diagram} implies that $\alpha_{jk}\circ\alpha_{ij}=\alpha_{ik}$.

Next, without assuming that the generic fiber of $f$ is normal, we prove statement (3) for $\cN_i$. We only need to prove that for any $1\le i,j\le p$ such that $\cP_j$ is a face of $\cP_i$, there exists a morphism $\beta_{ij}\colon \cN_i\to \cN_j$ whose composition with $B\to\cN_i$ is $B\to\cN_j$ since the rest of the statement follows from the same argument. Similarly as above, we can assume that $B$ is smooth. For every $\vb\in\cP_i^\circ$, $(Z_i, \vb\Delta_i)\to B$ is the slc model of $(X,\vb D)\to B$ and thus its normalization $(Z^\nu_{i},\Gamma_{i}+\vb\Delta^\nu_{i})$ is the log canonical model of $(X^\nu,\Gamma+\vb D^\nu)$ over $B$. By the same argument as above, for every $\vb\in\cP_i$, the log canonical model of $(Z^\nu_{i},\Gamma_{i}+\vb\Delta^\nu_{i})$ over $B$ exists and it is the same as that of $(X^\nu,\Gamma+\vb D^\nu)$. Let $\vb_j$ be any point in $\cP_j^\circ\subseteq\cP_i$. We deduce that the slc model $(Z_j, \vb_j\Delta_j)\to B$ of $(X,\vb_j D)\to B$ is the same as that of $(Z_i, \vb_j\Delta_i)\to B$ (see Definition \ref{defi:slc model} and Theorem \ref{thm:slc model exist}). The pullback of $(\cX_{\cN_i},\vb_j\cD_{\cN_i})\to\cN_i$ via $B\to\cN_i$ is isomorphic to $(Z_i, \vb_j\Delta_i)\to B$. Since $\cN_i$ is normal and the generic fiber of $(Z_i, \vb_j\Delta_i)\to B$ has an slc model, $(\cX_{\cN_i},\vb_j\cD_{\cN_i})\to\cN_i$ has an slc model by Theorem \ref{thm:slc model exist} which induces a unique morphism $\beta_{ij}\colon \cN_i\to \cN_j$. The composition of $\beta_{ij}$ and $B\to\cN_i$ is $B\to\cN_j$ by the discussion above.
\end{proof}

We remark that the morphism $\alpha_{ij}$ in the theorem above is given by taking fiberwise ample model by the construction of $\alpha_{ij}$ and Lemma \ref{lem:ample model commutes with base change}.

\subsection{Proof of main theorems}

We are now in a position to prove our main theorems.

\begin{proof}[Proof of Theorem \ref{main thm:weight reduction}]
The generic fiber of $(X,\va D)\to B$ has a good minimal model by Lemma \ref{lem:compare gmm existence on bir model} since it has a log canonical model. Since each irreducible component of $D$ dominates $B$ and $\mathbf{0}<\vb\le\va$, the restrictions of $\va D$ and $\vb D$ to the generic fiber of $X\to B$ have the same support. By Theorem \ref{thm:lc model exists non-R-Cartier}, the generic fiber of $(X,\vb D)\to B$ has a log canonical model since $K_{X/B}+\vb D$ is big over $B$. By Theorem \ref{thm:slc model exist}, we finish the proof.
\end{proof}

\begin{proof}[Proof of Theorem \ref{main thm:wall crossing}]
By applying Lemma \ref{lem:small Q-Cartier modification over normal base} finitely many times, we get a small birational projective morphism $\pi\colon Y\to X$ such that $K_{Y/B}$ and each irreducible component of $D_Y$ are $\Q$-Cartier and $(Y,\va D_Y)\to B$ is locally stable with normal generic fiber. Thus $(Y,\vb D_Y)\to B$ is locally stable for every $\vb\in\cP$ since $\vb\le\va$ and $K_{Y/B}+\vb D_Y$ is $\R$-Cartier. The log canonical model of $(Y,\vb D_Y)\to B$ exists by Theorem \ref{main thm:weight reduction} and coincides with that of $(X,\vb D)\to B$ by Remark \ref{rem:equivalent defn of lc model for family}. By applying Theorem \ref{thm:wall crossing general setup}(2)(3) to the locally stable families $(Y,\vb D_Y)\to B$ where $\vb$ varies in $\cP$, we finish the proof.
\end{proof}

If the family $(X,\va D)\to B$ is stable, its generic fiber has a log canonical model by definition. Thus one can apply Theorems \ref{main thm:weight reduction} and \ref{main thm:wall crossing} to this special case.

\begin{proof}[Proof of Theorem \ref{main thm:slc case}]
By Theorem \ref{thm:wall crossing general setup}, the reduction morphism $r_{\va,\vb}$ exists for any $\va \ge \vb$ in $\cP$, and there is a wall-and-chamber and wall-crossing structure on $\cP$ corresponding to $\cN_\va$. By the same argument as in \cite{ABIP23}*{Proposition 7.7}, $r_{\va,\vb}$ is birational. Since $(X,\va D)\to B$ is stable for every $\va\in \cP$, $r_{\vb,\vc}\circ r_{\va,\vb}$ and $r_{\va,\vc}$ coincide on a dense open substack of $\cN_{\va}$. Thus $r_{\vb,\vc}\circ r_{\va,\vb} = r_{\va,\vc}$ since $\cN_{\va}$ and $\cN_{\vc}$ are normal and separated.
\end{proof}

\begin{prop}
In the setting of Theorem \ref{main thm:wall crossing}, there exists a nonempty open subset $U\subseteq B$ which is independent of $\vb\in\cP$ such that $\Phi_{\vb}(u)$ is the point classifying the log canonical model of $(X_u,\vb D_u)$ for every $u\in U$ and every $\vb\in\cP$.
\end{prop}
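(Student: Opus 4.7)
The plan is to exploit the finite wall-and-chamber decomposition from Theorem \ref{main thm:wall crossing} to reduce to finitely many cases, and in each case invoke the commutation of ample-model formation with base change (Lemma \ref{lem:ample model commutes with base change}). By Theorem \ref{main thm:wall crossing}(1), $\cP$ admits a finite polyhedral decomposition $\cP = \cup_{i=1}^p \cP_i$ such that on each $\cP_i^\circ$ both the underlying $n$-marked family $(Z_\vb; \Delta_{\vb,1}, \ldots, \Delta_{\vb,n}) \to B$ and the birational map $X \dashrightarrow Z_\vb$ are independent of $\vb$; denote these by $(Z_i; \Delta_{i,1}, \ldots, \Delta_{i,n}) \to B$ and $\pi_i \colon X \dashrightarrow Z_i$. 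Since the relative interiors $\cP_i^\circ$ partition $\cP$, it will suffice to produce, for each $i$, a nonempty open $U_i \subseteq B$ on which the desired conclusion holds for every $\vb \in \cP_i^\circ$, and then set $U := \bigcap_{i=1}^p U_i$.

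Fix $i$. By Lemma \ref{lem:small Q-Cartier modification over normal base}, applied iteratively to each component of $D$, there exists a small birational morphism $Y \to X$ over $B$ such that $(Y, \vb D_Y) \to B$ is locally stable with normal generic fiber and $K_{Y/B} + \vb D_Y$ is $\R$-Cartier for every $\vb \in \cP$. By Remark \ref{rem:equivalent defn of lc model for family}, the log canonical model of $(Y, \vb D_Y) \to B$ coincides with $(Z_i, \vb\Delta_i) \to B$ for every $\vb \in \cP_i^\circ$. Fixing any $\vb_0 \in \cP_i^\circ$, Theorem \ref{main thm:MMP locally stable family} gives a finite $(K_{Y/B}+\vb_0 D_Y)$-MMP with scaling over $B$ terminating in a good minimal model whose ample model is $(Z_i, \vb_0\Delta_i)$; Lemma \ref{lem:ample model commutes with base change} shows that this ample-model formation commutes with arbitrary base change. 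Since the MMP consists of only finitely many flips and divisorial contractions, one may shrink $B$ by generic smoothness to a nonempty open $U_i$ over which every step restricts to a valid MMP step on every fiber, so that for all $u \in U_i$ the fiber $(Z_{i,u}, \vb_0\Delta_{i,u})$ is the log canonical model of $(X_u, \vb_0 D_u)$.

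To obtain the same conclusion uniformly for every $\vb \in \cP_i^\circ$ using a single $U_i$, I would observe that on a common log resolution $p\colon W \to Y$, $q\colon W \to Z_i$ the divisor identity
\[
p^*(K_{Y/B} + \vb D_Y) = q^*(K_{Z_i/B} + \vb \Delta_i) + F_\vb
\]
holds with $F_\vb \ge 0$ effective and $q$-exceptional, and with coefficients of $F_\vb$ linear in $\vb$. Consequently the support of $F_\vb$ is constant on a dense open subset of $\cP_i^\circ$, and effectivity plus $q$-exceptionality of the restriction of $F_\vb$ to a fiber are closed conditions that only need to be checked at the finitely many vertices of $\cP_i$. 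After a further (still nonempty) shrinking of $U_i$ to guarantee these properties pass to each fiber over $U_i$ at every vertex, we deduce the conclusion for all $\vb \in \cP_i$ simultaneously. The main obstacle is precisely this uniform-in-$\vb$ descent to fibers; its success hinges on the rigidity of $\pi_i$ across the chamber granted by Theorem \ref{main thm:wall crossing}(1), which is what reduces the a priori continuous parameter $\vb$ to finitely many discrete pieces of data.
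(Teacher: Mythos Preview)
Your argument is essentially correct and takes a genuinely different route from the paper. The paper's proof passes to a log resolution $(Y,\va D_Y+E)$ that is log smooth over an affine open $U$, reduces to rational coefficients via Lemma \ref{lem:semiample reduce to Q-coef}, and then invokes the deep invariance-of-log-plurigenera results of \cite{HMX18}*{Corollary 1.4 and Lemma 6.2} (with the good-minimal-model hypothesis verified through Theorem \ref{thm:gmm smaller coef}) to deduce that the fiberwise log canonical model agrees with the fiber of the relative log canonical model. You instead exploit the finite chamber decomposition of Theorem \ref{main thm:wall crossing}(1) to freeze the target $Z_i$ on each chamber, and then use a direct negativity-lemma comparison on a common resolution $W$: since $(Z_i,\vb\Delta_i)$ is a weak log canonical model of $(Y,\vb D_Y)$ over $B$ for every $\vb\in\cP_i$, the divisor $F_\vb$ is effective and $q$-exceptional, and restricting to a general fiber $W_u$ preserves both properties (the finitely many $q$-exceptional prime components $E_j$ of $F_\vb$ are independent of $\vb$, and for generic $u$ each $E_{j,u}$ is effective and $q_u$-exceptional by a dimension count). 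This is more elementary in that it bypasses \cite{HMX18}; the paper's approach, on the other hand, does not need to pass through the chamber decomposition.

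Two small clean-ups. First, your Step 4 (restricting each MMP step to the fiber) is both sketchy and unnecessary: the identity on $W$ in Step 5 already handles every $\vb\in\cP_i^\circ$ at once, since the coefficients $c_j(\vb)$ in $F_\vb=\sum_j c_j(\vb)E_j$ are nonnegative on all of $\cP_i$ and the open set avoiding the finitely many vertical $E_j$ (and where $W_u$ is smooth, $Y_u\to X_u$ is small, etc.) does not depend on $\vb$. Second, you should explicitly shrink $B$ to its smooth locus at the outset so that adjunction to fibers, the log canonical property of $(Y_u,\vb D_{Y,u})$, and the identity $K_{W/B}|_{W_u}=K_{W_u}$ are all automatic.
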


\begin{proof}
Recall that $\Phi_{\vb}\colon B\to\cK_{\vb}$ is the unique morphism induced by the stable family $f_{\vb}\colon (Z_{\vb}, \vb\Delta_{\vb})\to B$. By shrinking $B$, we can assume that $B$ is smooth and quasi-projective. By Theorem \ref{thm:locally stable smooth base}, $(X,\va D)$ is log canonical. Let $g\colon Y\to X$ be a log resolution of $(X,\va D)$ and $E$ the sum of the $g$-exceptional divisors. Then $(Y,\va D_Y+E)$ is log smooth. By Remark \ref{rem:different ways to define log canonical models are the same}, $f_{\vb}\colon (Z_{\vb}, \vb\Delta_{\vb})\to B$ is also the log canonical model of $(Y,\vb D_Y+E)$ over $B$. We can choose an affine open subset $U\subseteq B$ such that $(Y,\va D_Y+E)$ is log smooth over $U$. By shrinking $U$, we can also assume that $(X_u,\va D_u)$ is log canonical for every $u\in U$ since $(X,\va D)$ is log canonical. We deduce that the log canonical model of $(X_u,\vb D_u)$ is the same as that of $(Y_u,\vb (D_Y)_u+E_u)$ for every $u\in U$ and every $\vb\in\cP$ by Remark \ref{rem:different ways to define log canonical models are the same}. Thus we only need to prove that the log canonical model of $(Y_u,\vb (D_Y)_u+E_u)$ is the same as the fiber of the log canonical model of $(Y,\vb D_Y+E)$ over $B$ at the point $u\in U$ for every $u\in U$ and every $\vb\in\cP$. By the same argument as in Lemma \ref{lem:semiample reduce to Q-coef}, we can assume that $\vb$ has rational coordinates. By \cite{HMX18}*{Corollary 1.4}, we only need to prove that there exists a closed point $0\in U$ such that $(Y_0,\vb (D_Y)_0+E_0)$ has a good minimal model. Let $W=(f\circ g)^{-1}(U)$. By \cite{HMX18}*{Lemma 6.2}, it suffices to prove that $(W,\vb D_Y|_W+E|_W)$ has a good minimal model. Since $U$ is affine, $(W,\vb D_Y|_W+E|_W)$ has a good minimal model if it has a good minimal model over $U$. Thus we only need to prove that $(Y,\vb D_Y+E)$ has a good minimal model over $B$ for every $\vb\in\cP$. Since $(X,\va D)$ has a log canonical model over $B$ by Theorem \ref{thm:MMP locally stable family}, $(Y,\va D_Y+E)$ also has one and thus it has a good minimal model over $B$ by Lemma \ref{lem:compare gmm existence on bir model}. Since $\mathbf{0}<\vb\le\va$, $\Supp(\vb D_Y+E)=\Supp(\va D_Y+E)$. By Theorem \ref{thm:gmm smaller coef}, $(Y,\vb D_Y+E)$ has a good minimal model over $B$ for every $\vb\in\cP$.
\end{proof}

\begin{rem}
In addition to weight reduction morphisms, \cite{Has03} also studies forgetting morphisms (which is equivalent to reducing certain weights to zero) between moduli spaces $\cM_{g,\va}$ of curves with marked points. For moduli of stable pairs, if we restrict to the components whose generic points parameterize log canonical pairs, then the existence of forgetting morphisms is essentially equivalent to the abundance conjecture. In fact, if abundance holds, then the assumption of Theorem \ref{thm:wall crossing general setup} is automatically satisfied and hence we get the forgetting morphisms. On the other hand, if $(X,D)$ is a log canonical pair such that $K_X+D$ is nef and big and $H\ge 0$ is some general ample divisor on $X$, then $(X,D+\varepsilon H)$ is a stable pair for any $0<\varepsilon\ll 1$. To construct the forgetting morphism when $\varepsilon\to 0$ requires taking the ample model of $K_X+D$, and it is well known that abundance for log canonical pairs of log general type is equivalent to abundance in general.
\end{rem}

\section{Examples} \label{sec:examples}

In this section, we give some examples to show that some of the assumptions in our main results are optimal.

\subsection{Reducing weights on non-normal stable pairs} \label{ss:slc counterexample}

Since MMP can fail for semi-log-canonical pairs, we cannot reduce coefficients in an arbitrary way if the locally stable family has a non-normal generic fiber. The following is an explicit example.

\begin{exa}
Take $X_1=\bP^1\times\bP^1$ and let $X_2$ be the blowup of a point on $\bP^1\times\bP^1$. Let $E\subseteq X_2$ be the exceptional curve. On both $X_i$, let $F$ be the divisor class of $\{\mathrm{point}\}\times \bP^1$ and let $\Gamma$ be the divisor class of $\bP^1\times \{\mathrm{point}\}$. Our surface $X$ is obtained by gluing together $X_1$ and $X_2$ along some curves $\Gamma_i\subseteq X_i$ ($i=1,2$) where $\Gamma_1$ is a general member of the linear system $|\Gamma|$ and $\Gamma_2$ is the unique member of $|\Gamma-E|$. We need to also specify the boundary divisor $D$. On $X_1$ we take $D$ as the sum of two general members of $|\Gamma|$ together with three general members of $|F|$. On $X_2$ we take $D$ as the sum of two general members $D_1,D_2$ of the (base point free) linear system $|\Gamma+F-E|$ plus $E$ plus two general members of $|\Gamma+F|$. The following picture is an illustration of the resulting configuration. 
\medskip
\begin{center}
    \begin{tikzpicture}
    \begin{scope} 
        \node at (2.5,-0.5) {$X_1=\bP^1\times\bP^1$};
        \draw (0,0) rectangle (5,3);
        \draw[name path=Gamma1] (0.5,0.5) -- (4.3,0.5);
        \node[font=\tiny] at (4.5,0.5) {$\Gamma_1$};
        \draw (0.5,1.5) -- (4.3,1.5);
        \draw (0.5,2.5) -- (4.3,2.5);
        \draw[name path=Fp] (1,0.2) -- (1,2.8);
        \draw[name path=Fq] (2,0.2) -- (2,2.8);
        \draw[name path=Fr] (4,0.2) -- (4,2.8);
        \path [name intersections={of=Fp and Gamma1,by=p1}];
        \path [name intersections={of=Fq and Gamma1,by=q1}];
        \path [name intersections={of=Fr and Gamma1,by=r1}];
        
        \filldraw[black] (p1) circle (1pt) node[anchor=south west,font=\tiny] {$p_1$};
        \filldraw[black] (q1) circle (1pt) node[anchor=south west,font=\tiny] {$q_1$};
        \filldraw[black] (r1) circle (1pt) node[anchor=south east,font=\tiny] {$r_1$};
    \end{scope}
    
    \begin{scope} 
        \node at (8.5,-0.5) {$X_2=\mathrm{Bl}_x (\bP^1\times\bP^1)$};
        \draw (6,0) rectangle (11,3);
        
        \draw[name path=F0] (10,2.8) -- (10.2,1.3);
        
        \draw[name path=E] (10,0.2) -- (10.2,1.7);
        \node[font=\tiny] at (10.5,1) {$E$};
        
        \draw[name path=Gamma2] (6.8,0.5) -- (10.7,0.5);
        \node[font=\tiny] at (6.5,0.5) {$\Gamma_2$};
        \path [name intersections={of=E and Gamma2,by=r2}];
        \filldraw[black] (r2) circle (1pt) node[anchor=north west,font=\tiny] {$r_2$};
        
        \draw (6.8,0.8) -- (10.2,0.8);
        \draw (6.8,1.1) -- (10.2,1.1);
        \node[font=\tiny] at (6.5,0.8) {$D_1$};
        \node[font=\tiny] at (6.5,1.1) {$D_2$};

        \draw[name path=Dp] (7,0.3) -- (10.2,2.8);
        \draw[name path=Dq] (8,0.3) -- (10.2,2.5);
        \path [name intersections={of=Dp and Gamma2,by=p2}];
        \path [name intersections={of=Dq and Gamma2,by=q2}];
        \filldraw[black] (p2) circle (1pt) node[anchor=north west,font=\tiny] {$p_2$};
        \filldraw[black] (q2) circle (1pt) node[anchor=north west,font=\tiny] {$q_2$};
    \end{scope}
    \end{tikzpicture}
\end{center}
On each component $X_i$ the divisor $D$ intersects $\Gamma_i$ in three points $p_i,q_i,r_i$, and the gluing between $X_1$ and $X_2$ should identify $\{p_1,q_1,r_1\}$ with $\{p_2,q_2,r_2\}$. It is not hard to check that $K_{X_i}+D|_{X_i}+\Gamma_i$ is ample on each component, hence by Koll\'ar's gluing theory \cite{Kol13}*{Theorem 5.13}, we know that $(X,D)$ is a stable pair.

Note that $D_1$ and $D_2$ do not intersect the double locus of $X$, hence they are Cartier divisors on $X$. If we drop the coefficients of $D_1$ and $D_2$ in $D$ to some $c\in (0,\frac{1}{2})$ and denote the new boundary divisor by $D'$, then $K_{X_1}+D'|_{X_1}+\Gamma_1$ remains unchanged, while $K_{X_2}+D'|_{X_2}+\Gamma_2$ has negative intersection with $E$ (we have $(K_{X_2}+D|_{X_2}+\Gamma_2)\cdot E=D_1\cdot E=D_2\cdot E=1$, hence $(K_{X_2}+D'|_{X_2}+\Gamma_2)\cdot E=2c-1$), thus the MMP $\pi\colon X_2\to X'_2$ on $X_2$ contracts the curve $E$ and we get $X'_2=\bP^1\times \bP^1$. However, the pair $(X'_2,\pi_*(D'|_{X_2}))$ no longer glues with $(X_1,D'|_{X_1})$, since $(\pi_*(D'|_{X_2})\cdot \pi_*\Gamma_2)=2+2c<3=(D'|_{X_1}\cdot \Gamma_1)$, i.e. the differents along the conductors do not match.
\end{exa}

\subsection{Reduction morphism does not factor}

By modifying an example in \cite{ABIP23}*{Section 8.3}, we show that in Theorem \ref{thm:wall crossing general setup} the reduction morphism $B\to \cN_\vb$ does not factor through $\cN_\va$ if we do not assume that the generic fiber of $(X,\va D)\to B$ is stable (in other words, if $K_{X/B}+\va D$ is only relatively big but not ample along the generic fiber). Our examples have klt generic fibers, so they also illustrate that the ampleness assumption in the main results of \cite{ABIP23} is necessary (more specifically, the only place in \emph{loc. cit.} that relies on this assumption is their Theorem 7.6).

\begin{exa}
Consider a line $L_0$ in $\bP^2$ and a smooth plane curve $C_0\subseteq \bP^2$ of degree $d\gg 0$ that has transversal intersection with $L_0$. Let $p\in C_0\cap L_0$, let $\pi\colon X\to \bP^2$ be the blow up at $p$, let $C$ (resp. $L$) be the strict transform of $C_0$ (resp. $L_0$), and let $E$ be the $\pi$-exceptional curve. Choose some $a\in (\frac{1}{2},1)$. We have
\[K_X+a(C+L)+(2a-1)E=\pi^*(K_{\bP^2}+a(C_0+L_0)).\]
Thus if $b\ge 2a-1$, the ample model of $(X,a(C+L)+bE)$ is $(\bP^2,a(C_0+L_0))$, while for $b\in (2a-1-\varepsilon,2a-1)$ (where $0<\varepsilon\ll 1$), the ample model of $(X,a(C+L)+bE)$ is itself. Now consider the moduli space $U$ of such (marked) family $(X;C,L,E)$ as $C_0,L_0$ and $p\in C_0\cap L_0$ vary. Note that for a fixed choice of $C_0$ and $L_0$ there are exactly $d$ possible ways to choose $p$. Moreover, since $E$ is the only $(-1)$-curve on $X$, any isomorphism $(X,C+L)\cong (X',C'+L')$ necessarily comes from an isomorphism $(\bP^2,C_0+L_0)\cong (\bP^2,C'_0+L'_0)$ sending $p$ to $p'$. For general choice of $C_0$ and $L_0$, the pair $(\bP^2,C_0+L_0)$ does not have any non-trivial automorphisms, hence different choices of the intersection point $p$ give non-isomorphic $(X,C+L)$. Putting the above discussions together, we see that the reduction morphism $U\to \cN_{a,b}$ is birational when $2a-1-\varepsilon<b<2a-1$, and is generically finite of degree $d>1$ when $b\ge 2a-1$. It follows that the natural (wall-crossing) map $\cN_{a,2a-1-\varepsilon}\to \cN_{a,2a-1}$ is also generically finite of degree $d>1$ and the reduction morphism $U\to \cN_{a,2a-1-\varepsilon}$ does not factor through $\cN_{a,2a-1}$. 
\end{exa}

\subsection{Walls for \texorpdfstring{$\cK_\va$}{moduli} when coefficients vary}

In general, the moduli 
\[\cK_{\va,\le v} := \cup_{v_0\in [0,v]} \cK_{\va,v_0}\]
of stable pairs with coefficient vector $\va$ and volume at most $v$ may have infinitely many connected components. A priori, we could still investigate whether there is some finite wall-and-chamber structure in this setting. We declare that a coefficient vector $\va$ sits on a wall if there exist a pair $(X,D)$ and some $\va_i$ $(i=1,2,\dots)$ converging to $\va$ such that $(X,\va_i D)$ are stable pairs of volume at most $v$ for all $i$ but $(X,\va D)$ is not. The next series of examples show that such walls are often badly behaved.

\begin{exa}
Let $C_d\subseteq\bP^2$ be a smooth plane curve of degree $d\ge 4$, then $(\bP^2,aC_d)$ is stable if and only if $a>\frac{3}{d}$. Thus we hit a wall at $a=\frac{3}{d}$. As $d$ varies, we get infinitely many walls near $0$.
\end{exa}

\begin{exa}
Fix an integer $d\ge 2$. For each $m\in\bN^*$, let $C_{m}$ be a general member of the base point free linear system $|\cO(dm)|$ on $\bP(1,1,m)$. Then $(\bP(1,1,m),aC_m)$ is stable if and only if $a>\frac{1}{d}+\frac{2}{dm}$. Thus there are walls at $\frac{1}{d}+\frac{2}{dm}$ for each $m$, and we get infinitely many walls near $\frac{1}{d}$.
\end{exa}

In these two examples, the walls accumulate from above. One may ask whether the walls also accumulate from below (equivalently, whether the walls satisfy the ascending chain condition, or ACC). The next example shows that this can also happen.

\begin{exa}
If $X$ is a klt surface with ample canonical divisor, and there exists a divisor $E$ over $X$ such that $a(E,X,0)<0$, then there is a wall at $-a(E,X,0)$. This is because  there exists a birational morphism $\pi\colon Y\to X$ that extracts $E$ as the unique exceptional divisor, and $K_Y+(-a(E,X,0)-\varepsilon)E = \pi^*K_X-\varepsilon E$ is ample when $0<\varepsilon\ll 1$. By \cite{Kol08-topo-BMY-ineq}*{43}, there is an unbounded set of klt surfaces with ample canonical divisor and volume $\le 2$. Moreover, the minimal log discrepancies of these surfaces can be arbitrarily close to zero (this follows from the general boundedness result in \cite{HMX14-ACC}, but can also be deduced through a direct computation). By our previous discussion, this implies that there are walls accumulating near $1$ (the accumulation is necessarily from below).
\end{exa}

\bibliographystyle{amsalpha}
\bibliography{biblio}

\end{document}